\documentclass[10pt]{article}

\def\ds{\displaystyle}

\usepackage[colorlinks,allcolors=red]{hyperref}

\addtolength{\textwidth}{3cm}
\addtolength{\oddsidemargin}{-1.5cm}
\addtolength{\textheight}{4cm}
\addtolength{\topmargin}{-2cm}
\usepackage{amsfonts}
\usepackage{graphicx}
\usepackage{psfrag}
\usepackage{color}
\usepackage{amsmath,amsfonts,latexsym,amsbsy,amssymb,amsthm,color,enumitem}

\parindent 0pt
\parskip 8pt


\hfuzz=10 pt

\def\ds{\displaystyle}

\def\R{\mathbb{R}}

\def\bega{\begin{array}}
\def\enda{\end{array}}
\def\begi{\begin{itemize}}
\def\endi{\end{itemize}}

\def\bel{\begin{equation}\label}
\def\eeq{\end{equation}}
\def\sqr#1#2{\vbox{\hrule height .#2pt
\hbox{\vrule width .#2pt height #1pt \kern #1pt
\vrule width .#2pt}\hrule height .#2pt }}

\newtheorem{theorem}{Theorem}[section]

\newtheorem{definition}[theorem]{Definition}
\newtheorem{remark}[theorem]{Remark}
\newtheorem{lemma}[theorem]{Lemma}
\newtheorem{proposition}{Proposition}[section]

\definecolor{forestgreen}{rgb}{0.13, 0.55, 0.13}

\begin{document}

\title{\bf  On the global controllability of scalar conservation laws with  boundary and source controls}

\author{Fabio Ancona$^{(1)}$ and Khai T. Nguyen$^{(2)}$\\ 
\\
 {\small $^{(1)}$ Dipartimento di Matematica ``Tullio Levi-Civita'', Universit\`a di Padova, }\\  
  {\small $^{(2)}$ Department of Mathematics, North Carolina State University }\\ 
 \\  {\small E-mails: ~ancona@math.unipd.it,  ~ khai@math.ncsu.edu}
 }

\date{\today}

\maketitle

\begin{abstract}
We provide global and semi-global controllability results for hyperbolic conservation laws on a bounded domain,
with a general (not necessarily convex) 
flux and a time-dependent source term acting as a control.
The results are achieved for, possibly critical, both continuously differentiable states and BV states.
The proofs are based on a combination of the return method and on the analysis of the Riccati equation
for the space derivative of the solution.
\vspace{0.2in}

\noindent
{\bf Key  words.} conservation laws, source control, global exact controllability, return method
\vspace{7pt}

\noindent
{\bf AMS Mathematics Subject Classification.} 
35L65, 
35Q93,
93B05,
93C20.
\end{abstract}

\section{Introduction and main results} \label{sec:int}
We are concerned with the problem of controllability of a one space-dimensional scalar conservation law
on a bounded domain
\begin{equation}\label{eq:claw-homog}
\partial_t u+\partial_x f(u)=0\,,~~~~t>0,\ x\in [a,b]\,,
\end{equation}
where $u=u(t,x)$ is the state variable 
 and the flux function
$f:I \mapsto\R$ is a smooth map
defined on some open interval $I \subseteq \R$.
Most of the literature concerning the controllability of hyperbolic partial differential equations
analyzes the states $\psi\doteq u(T,\cdot)$ that can be reached at a fixed time $T>0$,
through the influence of boundary controls
 acting at the end points $\{a,b\}$,
when an initial condition is given
\begin{equation}\label{eq:datum}
u(0,x)~=~\overline{u}(x)\,,~~~~x\in [a,b]\,.
\end{equation}
In the case of  conservation laws~\eqref{eq:claw-homog} with a strictly convex flux $f$,
Ancona and Marson~\cite{ancmar1,ancmar2} 
and Adimurthi et al.~\cite{adghogo} 
established a characterization of the rechable states with boundary controls.
A similar characterization of approximately rechable states for the Burgers equation
was provided by Horsin~\cite{hor}.
From these results
it follows that,
if we start with a general initial data $\overline u\in {\bf L^\infty}([a,b])$,
 the profiles $\psi$ that are attainable at a time $T>0$
with boundary controls
at $x=a$ and $x=b$,
 are only those which satisfy suitable Olenik-type inequalities,
provided that 
\begin{equation}
\label{bdry-control-time}
T\geq \overline T\doteq \max\left\{\sup_{{x\in (a,b)}}\frac{x-a}{\lfloor f'\circ \psi(x)\rfloor_+}, \ \sup_{{x\in (a,b)}}\frac{b-x}{\lfloor f'\circ \psi(x)\rfloor_-}\right\},
\end{equation}
where $\lfloor a \rfloor_-\doteq \max\{-a,0\}$, $\lfloor a \rfloor_+\doteq\max\{a,0\}$
denote the negative and positive part, respectively, of $a\in\R$.
\vspace{-5pt}

For conservation laws with general nonconvex flux, Leautaud~\cite{Leau}
proved the attainability in finite time of  constant states, employing boundary controls, while 
Andreianov, Donadello and Marson~\cite{andoma} derived sufficient conditions for the 
reachability  of (non constant) states in the case of a nonconvex flux
with a single inflection point, where one regards as controls the initial data.
All these results show, in particular,  that conservation laws are not 
exactly controllable in finite time to  {\it critical states} (with vanishing characteristic speed).

\vspace{-5pt}

Here, in the same spirit of Chapouly~\cite{Chap} and Perrollaz~\cite{Perrollaz}, we wish to investigate how the effect of a control acting through 
a time dependent source term on the right-hand side
of~\eqref{eq:claw-homog},
in combination with the boundary controls,  allows to: establish  global controllability results;
  achieve the reachability of a broader class of states (including critical states); realize the exact controllability to
 such states in a   shorter time than the one required when employing only boundary controls.

Namely, we shall investigate the exact controllability problem
for a balance law 
\begin{equation}\label{eq:blaw}
\partial_t u+\partial_x f(u)=h(t)\,,~~~~t>0,\ x\in [a,b]\,,
\end{equation}
where we regard as controls both the boundary data acting at the end points $\{a,b\}$
of the domain, and the source term $h$ depending only on time.
We recall that there are two possible settings within which to study this problem.
The first possibility is to consider classical solutions (i.e. Lipschitz continuous
functions that satisfy the equation
almost everywhere), 
assuming that the source and the boundary controls are  regular functions as well. 
The other {possibility} is to consider weak (distributional) solutions which satisfy an entropy
admissibility criterium,
which are natural in this framework since in general classical  solutions of~\eqref{eq:claw-homog} 
develop discontinuities in finite time because of the nonlinearity of the equation. 

In the first setting Chapouly~\cite{Chap} showed that, when $f(u)=u^2/2$,  
for every $T>0$ 
one can drive in time $T$ any preassigned continuously differentiable initial data $\overline u$ to any continuously differentiable target state  
$\psi$
with a classical 
solution of~\eqref{eq:blaw}, using suitable source $h(t)$
and boundary controls at $x=a$, $x=b$.
In the same setting, for quasilinear hyperbolic systems,  
 local~\cite{LiRaoChar,LiYu,zhuang,zhulirao} and global (in the linearly degenerate case)~\cite{wang} controllability results for $C^1$ states
were established employing boundary and distributed controls on the source that depend on both $(t,x)$ variables.
In the second setting and for general strictly convex flux $f$, Perrollaz~\cite{Perrollaz} provided sufficient conditions 
for the reachability (in arbitrarly small time) of a state $\psi\in BV([a,b])$  
with boundary and source controls, through  entropy weak 
solutions of~\eqref{eq:blaw}.
 In a related result Corghi and Marson~\cite{xor-mar} established a
 characterization of the attainable set for scalar strictly convex balance laws 
 evolving on the 
 whole real line,  with  the source term (depending on both space and time) regarded as a control.

In this paper we will first establish the global and semi-global controllability of continuously differentiable states for a conservation law with a general smooth flux,
when time dependent source and boundary controls are acting on the equation.
Next, in the case of convex (non necessarily strictlly convex) 
conservation laws,
we will provide BV bounds on the smooth source control and on the 
$C^1$ solution connecting
an initial datum $\overline u$ to a terminal state $\psi$,
in terms of the positive variation of $\psi$ and of the negative variation of $\overline u$.
Finally, relying on such BV bounds, we will  show the reachability in finite time of  states
$\psi\in BV([a,b])$  that satisfies one-sided Lipschitz estimates similar to those stated in~\cite{Perrollaz}.
The advantage of this construction is that we obtain the source control
and the corresponding solution as limit of regular solutions which
are easier to handle than the piecewise constant
 front tracking solutions employed in~\cite{Perrollaz}.
In fact, we rely on the approach developed in the present paper to address similar problems of global controllability 
for diagonal systems of conservation laws in the forthcoming paper~\cite{anc-khai}.

Control problems for conservation laws arise in many different applications including: vehicular  traffic models~\cite{ACCG1,
ACCG2,CGR,DMPR},
 oil reservoir simulation and sedimentation models~\cite{andoma}, supply chain~\cite{GHK,LAHR}, gas dynamics~\cite{GL-survey}.
In practice a time dependent source control can be  viewed as a control parameter acting
on the flux function of the conservation law letting vary its flux capacity. 
We refer to~\cite{Perrollaz} for a discussion  of various models where source controls naturally appear to
govern the dynamics of the corresponding balance law.

Since we are assuming to have full control on both endpoints $\{a, b\}$ of the domain, 
and because boundary conditions for nonlinear hyperbolic equations are quite involved 
(e.g. see~\cite{BiSpi,Serre}), it will be simpler 
to reformulate the controllability problem in an undetermined form
(where the boundary data are not explicitely prescribed). 
Therefore, given an initial state $\overline u$ and a terminal state $\psi$,
we will rephrase the problem 
of steering~\eqref{eq:claw-homog} from $\overline u$ to $\psi$
via boundary and source controls, into the equivalent one of determining 
a time dependent source  $h=h(t)$ and a solution of~\eqref{eq:blaw}
that satisfies~\eqref{eq:datum} together with the terminal condition 
\begin{equation}\label{terminal-datum}
u(T,x)~=~\psi(x)\,,~~~~x\in [a,b]\,.
\end{equation}
The corresponding boundary controls  can  be recovered afterwards by taking the traces of $u$ at $x=a$ and $x=b$.

Before stating the main results, we recall the definition of entropy admissible weak solutions.
An entropy/entropy flux pair for the equation~\eqref{eq:blaw} is a couple of continuously differentiable maps
$(\eta,q):I \to\R$, that satisfy $D\eta(u)\cdot Df(u)=Dq(u)$ for all $u\in\ I$.
Observe that, in particular, $(\eta,q)=(\pm Id, \pm f(u))$ provide two entropy/entropy flux pairs.
Then we shall adopt the following definition.
\begin{definition}A function $u:[0,T]\times[a,b]\to I$ is called an  \emph{entropic weak solution} of~\eqref{eq:blaw}, \eqref{eq:datum} on $[0,T]\times[a,b]$, if it is a
continuous function from $[0,T]$ into ${\bf L}^1([a,b]; I)$, which assumes almost everywhere the initial datum\eqref{eq:datum}, 
%
%
and that is an entropy admissible distributional solution of~\eqref{eq:blaw} on $(0,T)\times (a,b)$, i.e. such that
for any entropy / entropy flux pair $(\eta,q)$, with $\eta$ convex, there holds
$$
\int_0^T\int_a^b \Big\{\eta(u(t,x))\partial_t\varphi(t,x)+q(u(t,x))\partial_x\varphi(t,x)+\eta'(u(t,x)) h(t)\cdot \varphi(t,x)\Big\}\,dx\,dt\geq 0\,,
$$
for all test functions $\varphi\in {C}_c^1$, 
$\varphi\geq 0$,  with compact support in $]0,T[\,\times\,]a,b[$.
\end{definition}


Our first results concern the global controllability of continuously differentiable states. 
Throughout the paper, for any continuously differentiable map $\varphi: J\to \R$, defined on some interval $J\subset \R$,
we shall adopt the notation 
\begin{equation}
\|\varphi\|_{C(J)}\doteq \sup\{|\varphi(x)|:\, x\in J\}.
\end{equation}
Moreover,  to estimate the maximal speed of the characteristics with which 
can travel an initial data taking values in a given set $J'\subseteq J$,
we introduce
the quantities
\begin{equation}
\label{norm1-[Delta-def}
{[| \varphi|]_{J'}}\doteq  \sup_{\{k\,\,|\, J'+k\,\subseteq J\}}\, \inf_{u\in J'}
\big|\Delta \varphi(u;k)\big|\,,
\qquad
{\Delta \varphi(u;k)\doteq \dfrac{\varphi(u+k)-\varphi (u)}{k},}
=\dfrac{\ds\int_0^k\!\!\varphi'(u+v)~dv }{k}\,,
\end{equation}
and, for every $\varepsilon>0$,
\begin{equation}
\label{argsup}
\arg\text{sup}{[| \varphi|]_{J',\varepsilon}}\doteq 
\begin{cases}
\inf\Big\{k\geq 0\,\,|\, J'+k\,\subseteq J,&   \big|\Delta \varphi(u;k)\big|>{[| \varphi|]_{J'}}-\varepsilon
\ \ \forall~u\in J'\Big\}\\
 \hspace{1.1in} \text{if}&
{[| \varphi|]_{J'}}= \!\!\!{\ds \sup_{\{k\geq 0\,\,|\, J'+k\,\subseteq J\}}}\!\!\!  \inf_{u\in J'}
\big|\Delta \varphi(u;k)\big|,
\\
\noalign{\medskip}
\sup\Big\{k\leq 0\,\,|\, J'+k\,\subseteq J,& \big|\Delta \varphi(u;k)\big|>{[| \varphi|]_{J'}}-\varepsilon
\ \ \forall~u\in J'\Big\}\\
 \hspace{1.1in} \text{if}&
{[| \varphi|]_{J'}}= \!\!\!{\ds \sup_{\{k\leq 0\,\,|\, J'+k\,\subseteq J\}}}\!\!\!  \inf_{u\in J'}
\big|\Delta \varphi(u;k)\big|.
\end{cases}
\end{equation}
We will also use the notation $\text{Tot.\!Var.}\{ \varphi;\,J'\}$ for the total variation
of $\varphi\in \text{BV}(J)$ on an interval $J'\subseteq J$ (e.g. see~\cite{Fol}).
\medskip

\noindent
We make the following standing assumptions on the flux function $f$:
\vspace{-5pt}
\begin{enumerate}
\item[{\bf (H1)}] $f: I =(i_-, i_+)\rightarrow \mathbb{R}$ 
is a twice continuously differentiable map;
\item[{\bf (H2)}] one of the following three conditions holds:
\begin{itemize}
\item[(i)] $\ds\lim_{u\to i_\pm} |f'(u)|<+\infty$\ \ \ and \ \  \  $\ds\lim_{u\to \pm \alpha} |f''(u)|<+\infty$\,; ~~~~
\item[(ii)] $i_+=+\infty$, \ \ $\ds\lim_{u\to +\infty} |f'(u)|=+\infty$ \ \ \ and \ \  \ $\ds\lim_{u\to +\infty} \frac{|f'(u)|}{\ds{\sup_{z\in(i_-,\,u)}}|f''(z)|}=+\infty$\,;
\smallskip
\item[(iii)] $i_-=-\infty$, \ \ $\ds\lim_{u\to -\infty} |f'(u)|=+\infty$ \ \ \ and \ \ \ $\ds\lim_{u\to -\infty} \frac{|f'(u)|}{\ds{\sup_{z\in(u,\,i_+)}}|f''(z)|}=+\infty$\,.
\end{itemize}
\end{enumerate}
\smallskip
\begin{theorem}\label{thm:glob-classic-controllability-nonconvex-1} 
Let  $f$ be a flux satisfying the assumptions {\bf (H1), (H2)-(i)},
and assume that $[|f|]_{I_1'}>0$, $[|f|]_{I_2'}>0$ for {intervals} $I_1', I_2'\subseteq I$.
Then,  given  any $a<b$, 
for  every~$\overline u\in C^1([a,b])$ and $\psi\in C^1([a,b])$, 
with $\mathrm{Im}(\overline u)\subsetneq I'_1$, $\mathrm{Im}(\psi)\subsetneq I'_2$,
and such that
\begin{gather}
\label{bound-initial-terminal-data-1}
\|\overline u'\|_{C^0([a,b])}< \frac{[|f|]_{I'_1}}{(b-a)
\cdot\! \|f''\|_{C^0(I)}}\,,
\qquad \quad
\|\psi'\|_{C^0([a,b])}< \frac{[|f|]_{I'_2}}{(b-a)
\cdot\! \|f''\|_{C^0(I)}}\,,
\end{gather}
and for any
\begin{equation}\label{T*-def}
T> {T^*:=T^*_1+T^*_2},\qquad\qquad T^*_1\doteq \frac{(b-a)}{[|f|]_{I'_1}}\,,\quad T^*_2\doteq \frac{(b-a)}{[|f|]_{I'_2}}
\end{equation}
there exists 
$h\in C^0([0,T])$
so that the Cauchy problem~\eqref{eq:blaw}, \eqref{eq:datum} admits a classical solution
$u\in C^1([0,T]\times[a,b])$,  that satisfies~\eqref{terminal-datum}. 
\end{theorem}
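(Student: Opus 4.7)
The plan is a return-method construction centered on the spatially constant states: on $[0,T_1]$ with $T_1$ slightly larger than $T^*_1$, steer $\overline u$ to a constant state $c$ by using the source as a drift; on $[T-T_2,T]$ with $T_2$ slightly larger than $T^*_2$, a time-reversed copy of this construction steers a constant $c'$ to $\psi$; and on the buffer interval $[T_1,T-T_2]$ the spatially constant solution $u(t,x)\equiv\alpha(t)$ with $\dot\alpha(t)=h_2(t)$ transports $c$ to $c'$. Since $T>T^*=T^*_1+T^*_2$, there is slack to choose $T_1,T_2$ so that this decomposition is well-defined.

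For Phase~I, fix $\varepsilon\in(0,[|f|]_{I'_1})$ small and let $k_1:=\arg\text{sup}{[|f|]_{I'_1,\varepsilon}}$, so $|\Delta f(u;k_1)|>[|f|]_{I'_1}-\varepsilon$ for every $u\in\mathrm{Im}(\overline u)\subsetneq I'_1$; by continuity $\Delta f(\cdot;k_1)$ has constant sign on $\mathrm{Im}(\overline u)$, say positive. Take $T_1>(b-a)/([|f|]_{I'_1}-\varepsilon)$ and the constant source $h_1(t):=k_1/T_1$, so $H_1(t):=\int_0^t h_1(s)\,ds=k_1 t/T_1$. Along any characteristic $\dot x=f'(u)$ one has $u(t,x(t))=\overline u(x_0)+H_1(t)$, and the substitution $s=H_1(t)$ in $\int_0^{T_1}f'(\overline u(x_0)+H_1(t))\,dt$ gives
\[
\alpha(x_0,T_1)-x_0=T_1\,\Delta f(\overline u(x_0);k_1)\ge T_1\,([|f|]_{I'_1}-\varepsilon)>b-a,
\]
so every initial characteristic exits $[a,b]$ through the right by time $T_1$. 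On $x=a$ prescribe $u(t,a)=g(t)$ so that each boundary characteristic carries the value $c:=\overline u(a)+k_1$ at time $T_1$, namely $g(t_0)=c-k_1(T_1-t_0)/T_1$; the $C^1$-compatibility $g'(0)=h_1(0)-f'(\overline u(a))\overline u'(a)$ at the corner $(0,a)$, and its mirror at $(T_1,b)$, are enforced by a localized boundary-layer modification of $g$ and $h_1$ whose only effect is to leave a small non-constant strip near $x=a$ at time $T_1$, absorbed into Phase~II with negligible loss of time.

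The chief obstacle is preventing shock formation. Differentiating $u(t,\alpha(x_0,t))=\overline u(x_0)+H_1(t)$ in $x_0$ gives the classical Riccati relation
\[
\partial_x u\bigl(t,\alpha(x_0,t)\bigr)=\frac{\overline u'(x_0)}{J(x_0,t)},\qquad J(x_0,t):=1+\overline u'(x_0)\int_0^t f''\bigl(u(s,\alpha(x_0,s))\bigr)\,ds,
\]
and a shock would require $J(x_0,\tau)=0$ for some exit time $\tau\le T_1$, hence $|\overline u'(x_0)|\cdot\|f''\|_{C^0(I)}\cdot T_1\ge 1$; but hypothesis~\eqref{bound-initial-terminal-data-1}, with $\varepsilon$ chosen so that $T_1$ stays close to $T^*_1=(b-a)/[|f|]_{I'_1}$, yields $\|\overline u'\|_{C^0([a,b])}\cdot\|f''\|_{C^0(I)}\cdot T_1<1$, so $J>0$ and $\partial_x u$ remains uniformly bounded on initial characteristics. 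The analogous estimate for boundary-issued characteristics uses the relation $\partial_x u(t,a)=(h_1(t)-g'(t))/f'(g(t))$ to control their initial slope, delivering $u\in C^1([0,T_1]\times[a,b])$.

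Phase~III is Phase~I applied to $\psi$ with the flux $-f$: it produces a $C^1$ solution $\tilde u$ on $[0,T_2]\times[a,b]$ joining $\psi$ to some constant $c'$ with source $\tilde h_3$, and the time reversal $u(t,x):=\tilde u(T-t,x)$, $h_3(t):=-\tilde h_3(T-t)$ delivers the desired piece on $[T-T_2,T]\times[a,b]$; the hypotheses on $\psi$ transfer verbatim because $(-f)''=-f''$ has the same $C^0$ norm and $[|-f|]_{I'_2}=[|f|]_{I'_2}$. Phase~II is then trivial: on $[T_1,T-T_2]$ take $u(t,x)\equiv\alpha(t)$ with $\alpha(T_1)=c$, $\alpha(T-T_2)=c'$, and $h_2(t):=\dot\alpha(t)$. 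Concatenating the three pieces and smoothing $h$ at the two junction times via a continuous partition of unity produces $h\in C^0([0,T])$ and $u\in C^1([0,T]\times[a,b])$ satisfying \eqref{eq:datum} and \eqref{terminal-datum}.
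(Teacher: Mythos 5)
Your overall strategy is the paper's: a return through constant states, a source that shifts the profile by an amount $\overline k$ nearly realizing the sup in the definition of $[|f|]_{I'_1}$ so the data are swept out of $[a,b]$ in a time close to $T^*_1$, a Riccati bound driven by \eqref{bound-initial-terminal-data-1} to exclude gradient blow-up, and a reversed copy for the terminal state (your time reversal with $-f$ is equivalent to the paper's space reflection $x\mapsto a+b-x$ plus time reversal). The genuine gap is in your Phase I, which you pose as an initial--boundary value problem on $[a,b]$ with data prescribed at $x=a$ and the strip filled by characteristics issued from $\{t=0\}\cup\{x=a\}$. For a general nonconvex $f$ -- the very case this theorem targets, where $f'$ may vanish on the relevant range -- the hypothesis $[|f|]_{I'_1}>0$ only controls the \emph{averaged} speed $\Delta f(\overline u(x_0);k_1)$; along the evolution $u=\overline u(x_0)+H_1(t)$ the instantaneous speed $f'$ can vanish or change sign even though the net displacement over $[0,T_1]$ exceeds $b-a$. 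Consequently (i) backward characteristics from points of $(0,T_1)\times(a,b)$ may exit through $x=b$, so data on $\{t=0\}\cup\{x=a\}$ need not determine the solution on the whole rectangle, and (ii) your boundary-gradient formula $\partial_x u(t,a)=(h_1(t)-g'(t))/f'(g(t))$ is invalid whenever $f'(g(t))=0$ (characteristic boundary), so the claimed $C^1$ bound along boundary-issued characteristics is not established. This is precisely why the paper never formulates a boundary-value problem: it extends $\overline u$ to a $C^1$ function on $\R$, constant outside a slightly larger interval and still satisfying the strict derivative bound, solves the pure Cauchy problem on the line, uses \eqref{u_x(t,x(t))} together with \eqref{bound-initial-terminal-data-1} to rule out blow-up for every $x_0\in\R$, and shows by the displacement estimate that any characteristic located in $[a,b]$ at time $T_1$ must have started in the left constant region, so $u(T_1,\cdot)$ is automatically constant there; the boundary controls are recovered as traces.

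Two further steps are asserted rather than proved and do not survive scrutiny as written. First, the corner compatibility at $(0,a)$ is repaired by an unspecified ``boundary-layer modification'' whose residue, ``a small non-constant strip near $x=a$ at time $T_1$ absorbed into Phase II with negligible loss of time,'' is incompatible with your Phase II, which requires an exactly constant state at $t=T_1$; transporting such a strip out of $[a,b]$ would cost a time of order $T^*_1$, not a negligible one. Second, ``smoothing $h$ at the junction times via a partition of unity'' cannot be done a posteriori: $u$ is tied to $h$ through the equation, and in your construction the solution becomes spatially constant only at exactly $t=T_1$, so modifying $h$ on $[T_1-\delta,T_1]$ destroys the matching. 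The remedy is to build the ramps into the control from the outset, as in \eqref{def-h-1}, and to absorb their effect into the displacement and Riccati estimates through the $O(\tau_1)$ error terms; with your present write-up these points, together with the characteristic-boundary issue above, leave the construction of a $C^1$ solution on $[0,T_1]\times[a,b]$ unproven.
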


\begin{remark}
Notice that $T_1^*$ is the controllability time needed to steer the initial date $\bar{u} $ to $0$ while $T_2^*$ is the controllability time needed to steer $0$ to the final state $\psi$ . The controllability time $T^*$ in~\eqref{T*-def} is in general much smaller than 
the  
boundary controllability time  $\overline T$ in~\eqref{bdry-control-time}.
In particular, we observe  that
  $T^*_2\approx \frac{1}{\sup_{u\in I} |f'(u)|}$,
whereas  $\overline T \approx \frac{1}{\inf_{u\in \mathrm{Im}(\psi)} |f'(u)|}$. Therefore, whenever the target state $\psi$ is close to a critical state, i.e. {$\overline T \approx \frac{1}{\inf_{u\in \mathrm{Im}(\psi)} |f'(u)|}\to 0$}, we have
$\overline T\to +\infty$, while this is not the case for~$T^*$. 
\end{remark}

\begin{theorem}\label{thm:glob-classic-controllability-nonconvex-2} 
Let  $f$ be a flux satisfying the assumptions {\bf (H1)} and\,  {\bf (H2)-(ii)} or {\bf (H2)-(iii)}.
Then,  given  any $a<b$, and 
$T>0$,
for  every~$\overline u\in C^1([a,b])$ and $\psi\in C^1([a,b])$, 
there exists 
$h\in C^0([0,T])$
so that the Cauchy problem~\eqref{eq:blaw}, \eqref{eq:datum} admits a classical solution
$u\in C^1([0,T]\times[a,b])$,  that satisfies~\eqref{terminal-datum}. 
\end{theorem}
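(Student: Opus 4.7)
The plan is to reduce Theorem \ref{thm:glob-classic-controllability-nonconvex-2} to Theorem \ref{thm:glob-classic-controllability-nonconvex-1} by truncating the flux at a sufficiently large level $M$ and exploiting the quantitative strengthening built into {\bf (H2)-(ii)} to make both the critical time and the gradient bounds of Theorem \ref{thm:glob-classic-controllability-nonconvex-1} favorable. Without loss of generality we assume {\bf (H2)-(ii)}; the case {\bf (H2)-(iii)} follows from the involution $u\mapsto-u$, $h\mapsto-h$, under which the equation is preserved with the auxiliary flux $\check f(u)=-f(-u)$ satisfying {\bf (H2)-(ii)}.

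Let $[c_-,c_+]\subset I$ be a bounded interval strictly containing $\mathrm{Im}(\overline u)\cup\mathrm{Im}(\psi)$, and fix $M>c_++1$. On the sub-interval $\tilde I_M:=(i_-,M+1)$, {\bf (H2)-(ii)} guarantees that $R_M:=\sup_{z\in\tilde I_M}|f''(z)|$ is finite (otherwise the ratio in {\bf (H2)-(ii)} could not diverge), so the restricted flux $\tilde f_M:=f|_{\tilde I_M}$ satisfies {\bf (H1)} and {\bf (H2)-(i)}. Taking $I_1'=I_2'=[c_-,c_+]$ and $k=M-c_+-1$, the representation $\Delta f(u;k)=\tfrac{1}{k}\int_0^k f'(u+v)\,dv$ yields
\begin{equation*}
[|\tilde f_M|]_{I_j'}\;\geq\;\tfrac{1}{2}\inf_{z\in[(c_-+M)/2,\,M]}|f'(z)|,
\end{equation*}
which diverges to $+\infty$ as $M\to+\infty$ by {\bf (H2)-(ii)}; moreover the comparison in {\bf (H2)-(ii)} forces $[|\tilde f_M|]_{I_j'}/R_M\to+\infty$. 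Consequently, for $M$ sufficiently large, the controllability time $T^*_1+T^*_2=2(b-a)/[|\tilde f_M|]_{I_j'}$ drops below the prescribed $T$, and both inequalities in \eqref{bound-initial-terminal-data-1} become valid for $\overline u,\psi$. Applying Theorem \ref{thm:glob-classic-controllability-nonconvex-1} to $\tilde f_M$ then produces $h\in C^0([0,T])$ and a classical solution $u\in C^1([0,T]\times[a,b])$ of $\partial_t u+\partial_x\tilde f_M(u)=h(t)$ with $u(0,\cdot)=\overline u$ and $u(T,\cdot)=\psi$.

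The main obstacle is the final consistency step: to conclude we must verify that $u(t,x)\in\tilde I_M$ throughout $[0,T]\times[a,b]$, so that $\tilde f_M(u)=f(u)$ and $u$ is in fact a classical solution of the original equation. Along classical characteristics $\dot x=f'(u)$ the equation gives $\dot u=h(t)$, hence
\begin{equation*}
\|u(t,\cdot)\|_{C^0([a,b])}\;\leq\;\|\overline u\|_{C^0([a,b])}+\int_0^T|h(s)|\,ds,
\end{equation*}
so it suffices to extract from the return-method construction underlying Theorem \ref{thm:glob-classic-controllability-nonconvex-1} an $L^1$ bound on $h$ that is insensitive to $M$ for $M$ large. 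This is plausible, since in that construction $h$ essentially encodes the derivative of the time-shift steering $\overline u$ through the zero state to $\psi$, whose total excursion is controlled by $\|\overline u\|_{C^0([a,b])}+\|\psi\|_{C^0([a,b])}$ independently of $M$. Should a direct estimate not be readily available, one can close the argument by bootstrapping: enlarge $M$ past any resulting $L^\infty$ bound on $u$, which secures $u\in\tilde I_M$ and completes the proof.
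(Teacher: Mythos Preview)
Your strategy coincides with the paper's: both arguments restrict $f$ to a bounded-above interval $(i_-,u_0)$ (your $\tilde I_M$) for $u_0$ large, observe that on this interval {\bf (H2)-(i)} holds, and then use {\bf (H2)-(ii)} to push the controllability time $T^*$ below the prescribed $T$ and the gradient thresholds in~\eqref{bound-initial-terminal-data-1} above $\|\overline u'\|_{C^0}$, $\|\psi'\|_{C^0}$. The paper routes this through the null-controllability reductions (Lemma~\ref{first-implication} and Proposition~\ref{C1-null-controll-nonconvex-2}) and invokes the asymptotic~\eqref{derf-[||]-asympt} rather than your explicit half-interval bound on $[|\tilde f_M|]_{I'_j}$, but the substance is the same.

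Where your write-up goes wrong is in the ``main obstacle'' paragraph. There is no obstacle: $\tilde f_M=f|_{\tilde I_M}$ is a \emph{restriction}, not a modification, of $f$. A classical solution of $\partial_t u+\partial_x\tilde f_M(u)=h$ produced by Theorem~\ref{thm:glob-classic-controllability-nonconvex-1} is by definition a $C^1$ map taking values in the domain $\tilde I_M$ of $\tilde f_M$ (the composition $\tilde f_M(u)$ would otherwise be undefined), and since $\tilde I_M\subset I$ with $\tilde f_M\equiv f$ there, that very $u$ already solves the original equation~\eqref{eq:blaw}. Your attempts to close this non-gap---an $L^1$ bound on $h$ argued only as ``plausible'', and a circular bootstrapping of $M$---are therefore unnecessary, and as written they are the weakest part of the proposal. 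A minor separate point: your inequality $[|\tilde f_M|]_{I'_j}\geq\tfrac12\inf_{[(c_-+M)/2,M]}|f'|$ tacitly assumes the integral of $f'$ over the lower half of $[u,u+k]$ does not cancel that over the upper half; this is true for $M$ large (since $|f'(u)|\to\infty$ forces $f'$ to eventually keep a sign while remaining bounded on bounded sets), but you should say so.
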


\begin{remark}
Clearly the flux $f(u)=\frac{u^2}{2}$ satisifies the   assumptions {\bf (H1)} and\,  {\bf (H2)-(ii)}.
Thus, as a particular case, we recover from Theorem~\ref{thm:glob-classic-controllability-nonconvex-2}
the global controllability result established in~\cite{Chap} for the Burgers equation
(by a quite different proof).
\end{remark}

\begin{theorem}\label{thm:glob-classic-controllability-convex-1} 
Let $f$ be a convex  map satisfying  the assumptions {\bf (H1), (H2)-(i)},
and assume that $[|f|]_{I_1'}>0$, $[|f|]_{I_2'}>0$ for {intervals} $I_1', I_2'\subseteq I$.
Then,  given  any $a<b$, $\rho>0$,
and $T>T^*$, with $T^*\geq 0$ as in~\eqref{T*-def},
there exists
 $C_1>0$ depending on  $b-a,\, T,\, T^*$,\, $\arg\mathrm{sup}{[|f|]_{I'_i,\,c_{1}}}$, $i=1,2$\, ($c_1$ being
 a constant depending on $\rho, T-T^*$),
so that the following hold.
 For  every $\overline u\in C^1([a,b])$ and $\psi\in C^1([a,b])$, 
with $\mathrm{Im}(\overline u)\subsetneq I'_1$, $\mathrm{Im}(\psi)\subsetneq I'_2$,
such that
\begin{equation}
\label{bound-initial-terminal-data-2}
\sup_{x\in [a,b]}\!\lfloor \overline u'(x)\rfloor_- \leq \!\frac{[|f|]_{I'_1}}{(b-a)\cdot\! \|f''\|_{C^0(I)}}-\rho \,,
\quad
\quad\sup_{x\in [a,b]}\!\lfloor  \psi'(x)\rfloor_+ \leq \!\frac{[|f|]_{I'_2}}{(b-a)\cdot\! \|f''\|_{C^0(I)}}-\rho\,,
\end{equation}
there exists 
$h\in C^0([0,T])$, with 
\begin{equation}
\label{tot-bar-bound1}
 \|h\|_{C^0([0,T])}+\text{Tot.\!Var.}\{ h;\,[0,T]\}\leq C_1\cdot 
\Big(1+ \|\overline u\|_{C^0([a,b])}+ \|\psi\|_{C^0([a,b])}
\Big),
\end{equation}
so that the Cauchy problem~\eqref{eq:blaw}, \eqref{eq:datum} admits a classical solution
$u\in C^1([0,T]\times[a,b])$,  that satisfies~\eqref{terminal-datum} and 
\begin{multline}
\label{tot-bar-bound2}
 \|u(t,\cdot)\|_{C^0([a,b])}+\text{Tot.\!Var.}\{ u(t,\cdot);\,[a,b]\}\\
 \leq C_1\cdot 
\Big( \|\overline u\|_{C^0([a,b])}+ \|\psi\|_{C^0([a,b])}+\!\sup_{x\in [a,b]}\lfloor \overline u'(x)\rfloor_-+\!\sup_{x\in [a,b]}\lfloor  \psi'(x)\rfloor_+\!\Big)
\end{multline}
for all $t\in (0,T)$.
\end{theorem}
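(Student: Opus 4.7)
My plan is to adapt the three-stage return-method construction underlying the proof of Theorem~\ref{thm:glob-classic-controllability-nonconvex-1}, now exploiting convexity of $f$ both to weaken the derivative hypothesis to the one-sided bounds in~\eqref{bound-initial-terminal-data-2} and to produce the quantitative $C^{0}$/total-variation estimates~\eqref{tot-bar-bound1}--\eqref{tot-bar-bound2}. The pivotal observation is that along a characteristic $\dot{x}=f'(u)$, $\dot{u}=h(t)$, the slope $v=u_{x}$ obeys the Riccati equation $\dot{v}=-f''(u)\,v^{2}$. Since $f''\geq 0$, $v$ is nonincreasing along characteristics: positive slopes remain bounded (indeed, they decrease), while only negative slopes can blow up in finite time. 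Thus on the forward phase the Jacobian of the characteristic map is protected from vanishing solely by a bound on the negative part of $\overline{u}'$; on the time-reversed terminal phase the roles swap and only the positive part of $\psi'$ matters.

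\textbf{Three-phase construction.} I would pick times $0<T_{1}<T_{2}<T$ with $T_{1}-T_{1}^{*}\geq (T-T^{*})/3$ and $T-T_{2}-T_{2}^{*}\geq (T-T^{*})/3$, and choose near-optimal shifts $k_{i}\in \arg\mathrm{sup}[|f|]_{I_{i}',c_{1}}$, with $c_{1}=c_{1}(\rho,T-T^{*})$, so that $|\Delta f(u;k_{i})|\geq (b-a)/T_{i}$ uniformly on $I_{i}'$ for $i=1,2$. On $[0,T_{1}]$ I first extend $\overline{u}$ to a $C^{1}$ function $\widetilde{u}_{0}$ on $[a-|k_{1}|,b+|k_{1}|]$ controlling $\|\widetilde{u}_{0}\|_{C^{0}}$ and $\sup\lfloor\widetilde{u}_{0}'\rfloor_{-}$ by the corresponding quantities of $\overline{u}$. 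Then I prescribe a primitive $H_{1}(t)=\int_{0}^{t}h_{1}(s)\,ds$ with $H_{1}(T_{1})=k_{1}$ and define the solution implicitly by $u(t,x(t,x_{0})):=\widetilde{u}_{0}(x_{0})+H_{1}(t)$, calibrating $H_{1}$ so that $u(T_{1},\cdot)\equiv c$ for some constant $c$. The Jacobian $1+\widetilde{u}_{0}'(x_{0})\int_{0}^{t}f''(\widetilde{u}_{0}(x_{0})+H_{1}(s))\,ds$ remains bounded below by a positive constant depending only on $\rho$, $b-a$, $\|f''\|_{C^{0}(I)}$, thanks to $f''\geq 0$ and the one-sided bound in~\eqref{bound-initial-terminal-data-2}, ensuring $C^{1}$ regularity. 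A time-reversed analogue on $[T_{2},T]$ produces $\psi$ from a constant state $c'$; on $[T_{1},T_{2}]$ I take $u(t,x)$ spatially constant, with $h$ equal to its time derivative smoothly interpolating from $c$ to $c'$.

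\textbf{Quantitative bounds.} The $C^{0}$ bound $\|u(t,\cdot)\|_{C^{0}}\leq\|\widetilde{u}_{0}\|_{C^{0}}+\|H_{1}\|_{C^{0}}$ on $[0,T_{1}]$ (and its analogue on $[T_{2},T]$) gives the first summand in~\eqref{tot-bar-bound2}. For the total-variation part I use the elementary inequality $\text{Tot.\!Var.}\{u(t,\cdot)\}\leq 2\,\text{Neg.\!Var.}\{u(t,\cdot)\}+2\|u(t,\cdot)\|_{C^{0}}$, and exploit the Riccati monotonicity to bound $\lfloor u_{x}(t,\cdot)\rfloor_{-}$ pointwise by a constant multiple of $\sup\lfloor\overline{u}'\rfloor_{-}$ on $[0,T_{1}]$, and symmetrically by $\sup\lfloor\psi'\rfloor_{+}$ on $[T_{2},T]$; the middle phase contributes nothing to the spatial total variation. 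For the source, $H_{1}$ can be chosen piecewise monotone with a bounded number of branches of amplitude $O(\|\overline{u}\|_{C^{0}}+|k_{1}|)$, yielding $\|h_{1}\|_{C^{0}}+\text{Tot.\!Var.}\{h_{1}\}\leq C(\|\overline{u}\|_{C^{0}}+1)$; the interpolation on $[T_{1},T_{2}]$ contributes $O(|c|+|c'|)$; the terminal construction is analogous, and each piece is tapered to zero near the glueing times so that the assembled $h$ is continuous and satisfies~\eqref{tot-bar-bound1}.

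\textbf{Main obstacle.} The technical heart is the coordination of the three phases: the extensions $\widetilde{u}_{0},\widetilde{u}_{T}$ of $\overline{u},\psi$ and the profiles $H_{1},H_{3}$ must be chosen so that (i) the Jacobians remain uniformly bounded below on the full $(t,x_{0})$ rectangle, (ii) the image of the characteristic map at times $T_{1}$, $T_{2}$ covers $[a,b]$, (iii) the target terminal states $c,c'$ match the implicit-definition construction of $u$ at the interfaces, and (iv) the assembled $h$ is continuous and satisfies~\eqref{tot-bar-bound1}. The calibration of $c_{1}$ in $\arg\mathrm{sup}[|f|]_{I_{i}',c_{1}}$ is what converts the temporal slack $T-T^{*}$ into the characteristic-speed margin needed by (i)--(ii); this calibration is precisely what fixes the dependence of $C_{1}$ on $b-a,T,T^{*},\arg\mathrm{sup}[|f|]_{I_{i}',c_{1}}$ claimed in the statement.
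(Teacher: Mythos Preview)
Your strategy is essentially the paper's: reduce via the return method to two one-sided null-controllability problems (the paper packages this as Proposition~\ref{C1-null-controll-convex-1} plus the time-and-space reflection $\overline u_2(x)=\psi(a+b-x)$ in Lemma~2.2), and on each leg use the Riccati equation $\dot z_1=-f''(u)z_1^2$ together with $f''\ge 0$ to see that only the negative part of the initial slope governs possible blow-up, which under~\eqref{bound-initial-terminal-data-2} leaves a positive Jacobian margin depending on $\rho$. Your explicit source profile with bounded number of monotone pieces of amplitude $O(|\overline k|+\|\overline u\|_{C^0})$ matches the paper's piecewise-linear $h$ in~\eqref{def-h-1} and yields~\eqref{tot-bar-bound1} in the same way.

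The one place where you diverge from the paper is the total-variation bound. You propose to control $\lfloor u_x(t,\cdot)\rfloor_-$ pointwise by the Riccati formula, obtaining $|z_1(t)|\le |\overline u'(x_0)|/(1-|\overline u'(x_0)|\,t\,\|f''\|)\le C(\rho)\cdot\sup_x\lfloor\overline u'(x)\rfloor_-$, and then integrate over $[a,b]$. This works, but the paper takes a shorter route: since a classical solution is also the unique entropy solution, and entropy solutions of scalar balance laws with source depending only on $t$ have nonincreasing total variation, one gets directly $\text{Tot.Var.}\{u(t,\cdot);\R\}\le \text{Tot.Var.}\{\overline u;\R\}$ for the extended datum, and then combines this with the elementary inequality $\text{Tot.Var.}\{\overline u\}\le 2(\|\overline u\|_{C^0}+\text{Tot.Var.}^-\{\overline u\})$. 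Your pointwise argument is more self-contained (no appeal to entropy-solution theory), while the paper's is cleaner and avoids tracking the characteristic change of variables; both yield the linear dependence on $\|\overline u\|_{C^0}+\sup\lfloor\overline u'\rfloor_-$ required in~\eqref{tot-bar-bound2}.
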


\begin{remark}
In the case  $f$ is a convex  map satisfying  the assumptions {\bf (H1), (H2)-(i)},
and \, $i_+=+\infty$,   $\ds\lim_{\rho\to 0}\arg\mathrm{sup}{[|f|]_{I',\rho}}=+\infty$
(or $i_-=-\infty$ \, and \, $\ds\lim_{\rho\to 0}\arg\mathrm{sup}{[|f|]_{I',\rho}}=-\infty$), the constants $C_1, c_1>0$ provided by Theorem~\ref{thm:glob-classic-controllability-convex-1} have the following property:
If either $T\to T^*$ or $\rho\to 0$,
then $c_1\to 0$ and $C_1\to +\infty$.
\end{remark}

\begin{remark}
\label{concave-convex-1}
If $f$ is a concave  map satisfying  the assumptions {\bf (H1), (H2)-(i)} and $[|f|]_{I_1'}>0$, $[|f|]_{I_2'}>0$ for $I_1', I_2'\subseteq I$,
then the same conclusions of Theorem~\ref{thm:glob-classic-controllability-convex-1} hold, replacing
$\lfloor \overline u'(x)\rfloor_-$ with $\lfloor \overline u'(x)\rfloor_+ $
and $\lfloor  \psi'(x)\rfloor_+$ with $\lfloor  \psi'(x)\rfloor_-$
in the inequalities~\eqref{bound-initial-terminal-data-2}, \eqref{tot-bar-bound2}.
\end{remark}
\smallskip
\begin{theorem}\label{thm:glob-classic-controllability-convex-2} 
Let $f$ be a convex  or concave map satisfying  the assumptions {\bf (H1)} and\,  {\bf (H2)-(ii)} or {\bf (H2)-(iii)}.
Then,  given  any $a<b$, and $T> 0$, 
 for  every $\overline u\in C^1([a,b])$ and $\psi\in C^1([a,b])$, 
there exists 
$h\in C^0([0,T])$ 
so that the Cauchy problem~\eqref{eq:blaw}, \eqref{eq:datum} admits a classical solution
$u\in C^1([0,T]\times[a,b])$,  that satisfies~\eqref{terminal-datum}.
\end{theorem}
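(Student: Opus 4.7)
The strategy is to reduce Theorem~\ref{thm:glob-classic-controllability-convex-2} to Theorem~\ref{thm:glob-classic-controllability-convex-1} by a truncation of the flux, exploiting the unbounded growth of $|f'|$ provided by {\bf (H2)-(ii)} (or {\bf (H2)-(iii)}) to shrink the controllability time $T^*$ below any prescribed $T>0$ and to satisfy the derivative bound~\eqref{bound-initial-terminal-data-2} automatically. By the symmetries $f\mapsto -f(-\,\cdot\,)$ (swapping convex and concave) and $u\mapsto -u$ (swapping {\bf (H2)-(ii)} and {\bf (H2)-(iii)}), we may assume throughout that $f$ is convex and {\bf (H2)-(ii)} holds, i.e.\ $i_+=+\infty$, $|f'(u)|\to+\infty$ and $|f'(u)|/\sup_{z\in(i_-,u)}|f''(z)|\to+\infty$ as $u\to+\infty$.

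\emph{Step 1 (Parameters and truncation).} Given $\overline u,\psi\in C^1([a,b])$ and $T>0$, fix $N_0$ larger than $\max\{\|\overline u\|_{C^0([a,b])},\|\psi\|_{C^0([a,b])}\}$ and $D_0$ larger than $\max\{\|\overline u'\|_{C^0([a,b])},\|\psi'\|_{C^0([a,b])}\}$. For a large parameter $M\gg N_0$ (to be fixed later), set $I'_1=I'_2=I'=(-N_0-1,M+1)$, so that $\mathrm{Im}(\overline u),\mathrm{Im}(\psi)\subsetneq I'$. Extend $f|_{[-N_0-1,\,M+1]}$ to a $C^2$ convex function $\tilde f:\mathbb R\to\mathbb R$ that is affine outside a slightly larger interval (with a $C^2$ mollification at the joints), so that $\|\tilde f''\|_{C^0(\mathbb R)}<+\infty$. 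The truncated flux $\tilde f$ satisfies {\bf (H1)} and {\bf (H2)-(i)}, preserves convexity, and agrees with $f$ on $I'$.

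\emph{Step 2 (Applying Theorem~\ref{thm:glob-classic-controllability-convex-1}).} By {\bf (H2)-(ii)}, as $M\to+\infty$, one has $[|f|]_{I'}\geq\inf_{\xi\in[N_0,M]}|f'(\xi)|\to+\infty$, and moreover $[|f|]_{I'}/((b-a)\|\tilde f''\|_{C^0})\to+\infty$. Therefore, for $M$ large enough one can choose a $\rho>0$ so that both estimates in~\eqref{bound-initial-terminal-data-2} are satisfied (with $D_0$ in place of $\sup\lfloor\overline u'\rfloor_-$ and $\sup\lfloor\psi'\rfloor_+$), and simultaneously $T^*=2(b-a)/[|f|]_{I'}<T$. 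Theorem~\ref{thm:glob-classic-controllability-convex-1} applied to $\tilde f$ then produces $h\in C^0([0,T])$ and a classical solution $u\in C^1([0,T]\times[a,b])$ of the Cauchy problem for $\tilde f$ with~\eqref{eq:datum} and~\eqref{terminal-datum}, together with the bounds~\eqref{tot-bar-bound1} and~\eqref{tot-bar-bound2}.

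\emph{Step 3 (Returning to the original flux).} To conclude one must show that $u$ takes values in $[-N_0-1,\,M+1]$, so that $\tilde f(u)=f(u)$ and hence $u$ solves~\eqref{eq:blaw}. The bound~\eqref{tot-bar-bound2} reads
\begin{equation*}
\|u(t,\cdot)\|_{C^0([a,b])}\leq C_1\cdot 2(N_0+D_0),
\end{equation*}
where $C_1=C_1(b-a,T,T^*,\arg\mathrm{sup}{[|f|]_{I',c_1}})$. Thus it suffices to pick $M$ large enough that $M+1>2C_1(N_0+D_0)$ and $M>N_0$ simultaneously.

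\emph{Main obstacle.} The delicate point is the implicit dependence $C_1=C_1(M)$: enlarging $M$ widens the admissible range but also modifies $T^*$ and $\arg\mathrm{sup}{[|f|]_{I',c_1}}$. Resolving the bootstrap requires tracking this $M$-dependence and exploiting the asymmetric growth in {\bf (H2)-(ii)} — namely that $|f'|$ diverges much faster than $\sup|f''|$ — to show that $C_1$ grows sublinearly in $M$, so that the constraint $M+1>2C_1(N_0+D_0)$ is eventually satisfied. This quantitative step is the true content of the proof; the rest is bookkeeping inherited from Theorem~\ref{thm:glob-classic-controllability-convex-1}.
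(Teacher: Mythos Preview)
Your overall strategy — reduce to the {\bf (H2)-(i)} case by working on a truncated flux, so that $T^*$ becomes smaller than $T$ and the derivative constraints~\eqref{bound-initial-terminal-data-2} become vacuous — is exactly the paper's approach. But two technical choices create a difficulty that the paper completely avoids.

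First, the paper does not \emph{extend} $f$ to a $C^2$ map $\tilde f:\R\to\R$; it simply \emph{restricts} $f$ to $I_{u_0}\doteq(i_-,u_0)$ for $u_0$ large. The restricted flux $f:I_{u_0}\to\R$ already satisfies {\bf (H1)}, {\bf (H2)-(i)} (with $\|f''\|_{C^0(I_{u_0})}=\sup_{z\in(i_-,u_0)}|f''(z)|$, the very quantity appearing in {\bf (H2)-(ii)}), and any classical solution produced by Theorem~\ref{thm:glob-classic-controllability-convex-1} for this restricted flux \emph{automatically} takes values in $I_{u_0}$, since that is the domain on which the equation is posed. Hence it is already a solution for the original $f$, and your Step~3 bootstrap disappears entirely. (The paper also passes through null controllability first, via Lemma~2.2 and Proposition~\ref{C1-null-controll-convex-2}, but this is not essential to the point.)

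Second, your bootstrap is not merely unpleasant bookkeeping — with your setup it likely fails. Tracing the constant $C_1$ through the proof of Proposition~\ref{C1-null-controll-convex-1}, one finds $C_1\geq 4(2+(b-a))+|\overline k|$ with $|\overline k|\approx\arg\mathrm{sup}[|f|]_{I',c_1}$, which is of order $M$ (you must shift $I'$ by roughly $M$ to reach the high-speed region). So $\|u(t,\cdot)\|_{C^0}$ is bounded only by a quantity of order $M$, and there is no reason to expect it to stay below $M+1$. Your hope that $C_1$ grows sublinearly in $M$ is not supported by the explicit constants. A separate issue: taking $I'=(-N_0-1,M+1)$ is unnecessarily wide; the intervals $I'_1,I'_2$ need only contain $\mathrm{Im}(\overline u),\mathrm{Im}(\psi)$, so $I'=(-N_0-1,N_0+1)$ suffices and makes $[|f|]_{I'}$ much easier to control.

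In short: replace ``extend $f$ to $\tilde f$ on $\R$'' by ``restrict $f$ to $(i_-,u_0)$'' and your Step~3 becomes tautological, matching the paper's one-line proof of Proposition~\ref{C1-null-controll-convex-2}.
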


\begin{remark}
If $f: I =(i_-, +\infty)\rightarrow \mathbb{R}$ is a map satisfying  the assumptions {\bf (H1)} and\,  {\bf (H2)-(ii)},
and $I'_1, I'_2\subset I$ are bounded intervals, then setting
\begin{equation}
\label{derf-[||]-def}
[|f|]_{I'\!,\, u}\doteq  \sup_{\{k\,\,|\, I'+k\,\subseteq (i_-,\,u)\}}\, \inf_{v\in I'}
\big|\Delta \textcolor{red}{f(v;k)}\big|\,,
\end{equation}
 one finds
\begin{equation}
\label{derf-[||]-asympt}
\lim_{u \to+\infty}\dfrac{\,[|f|]_{I'\!,\, u}\,}{|f'(u)|}=1\,.
\end{equation}
Hence, taking the limit as $u \to\infty$ 
in~\eqref{bound-initial-terminal-data-1}, \eqref{T*-def}, \eqref{bound-initial-terminal-data-2}
with $[|f|]_{I'_i,\, u}$ in place of $[|f|]_{I'_i}$, 
and 
$\|f''\|_{C^0((i_-,\,u))}$ in place of 
$\|f''\|_{C^0(I)}$, 
the controllability time $T^*$ in~\eqref{T*-def} results to be zero and the upper bounds in \eqref{bound-initial-terminal-data-1}, \eqref{bound-initial-terminal-data-2} becomes~$+\infty$.
Therefore,
 at least formally, one can deduce the conclusions of Theorem~\ref{thm:glob-classic-controllability-nonconvex-2}
from Theorem~\ref{thm:glob-classic-controllability-nonconvex-1}, and the conclusions of Theorem~\ref{thm:glob-classic-controllability-convex-2}
from Theorem~\ref{thm:glob-classic-controllability-convex-1}.
Similar formal deductions can be carried out in the case $f$ satisfies the assumptions {\bf (H1)} and\,  {\bf (H2)-(iii)}.
\end{remark}

Relying on Theorem~\ref{thm:glob-classic-controllability-convex-1} we then establish 
a global  controllability result for BV states that 
satisfy one-sided Lipschitz inequalities expressed in terms of Dini derivatives.
We recall that
\begin{equation}
D^- \omega(x)= \liminf_{h\rightarrow0}   \frac{\omega(x+h)-\omega(x)}{h} ,\quad \quad 
D^+ \omega(x)= \limsup_{h\rightarrow0}   \frac{\omega(x+h)-\omega(x)}{h},
\end{equation}
denote, respectively, the lower and
the  upper Dini derivative of a function $\omega$ at $x$.
\medskip

\begin{theorem}\label{thm:glob-controllability-convex-1} 
Under the same assumptions of Theorem~\ref{thm:glob-classic-controllability-convex-1},
given  any $a<b$, $\rho>0$, and $T>T^*$, with $T^*\geq 0$ as in~\eqref{T*-def},
there exists $C_2>0$ depending on  $b-a,\, T,\, T^*$,\, $\arg\mathrm{sup}{[|f|]_{I'_i,c_{2}}}$, $i=1,2$\, ($c_2$ being
 a constant depending on $\rho, T-T^*$),
so that the following hold.
 For  every $\overline u\in BV([a,b])$ and $\psi\in BV([a,b])$,  
with $\mathrm{Im}(\overline u)\subsetneq I'_1$, $\mathrm{Im}(\psi)\subsetneq I'_2$,
and such that
\begin{equation}
\label{cond-iniital-terminal-data}
\begin{cases}
d^-&\doteq~
\sup_{x\in[a,b]} \left\lfloor D^- \overline u(x)\right\rfloor_-<~\frac{[|f|]_{I'_1}}{(b-a)\cdot\! \|f''\|_{C^0(I)}}-\rho\,,
\\
d^+&\doteq~
\sup_{x\in[a,b]} \left\lfloor D^+\psi(x)\right\rfloor_+< \frac{[|f|]_{I'_2}}{(b-a)\cdot\! \|f''\|_{C^0(I)}}-\rho\,,
\end{cases}
\end{equation}
there exists 
$h\in BV([0,T])$, with 
\begin{equation}
\label{tot-bar-bound3}
\|h\|_{{\bf L}^\infty([0,T])}+
\text{Tot.\!Var.}\{ h;\,[0,T]\}\leq C_2\cdot 
\Big( 1+\|\overline u\|_{{\bf L}^{\infty}([a,b])}+ \|\psi\|_{{\bf L}^{\infty}([a,b])}
\Big),
\end{equation}
so that the Cauchy problem~\eqref{eq:blaw}, \eqref{eq:datum} admits an entropy weak solution
on $[0,T]\times[a,b])$,  that satisfies~\eqref{terminal-datum} and 
\begin{equation}
\label{tot-bar-bound4}
\|u(t,\cdot)\|_{{\bf L}^\infty([a,b])}+
\text{Tot.\!Var.}\{ u(t,\cdot);\,[a,b]\}\leq C_2\cdot 
\Big( \|\overline u\|_{{\bf L}^{\infty}([a,b])}+ \|\psi\|_{{\bf L}^{\infty}([a,b])}
+d^-+d^+\Big)
\end{equation}
for all $t\in (0,T)$
\end{theorem}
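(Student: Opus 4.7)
The plan is to obtain the entropy weak solution as a limit of classical $C^1$ solutions provided by Theorem~\ref{thm:glob-classic-controllability-convex-1}. First I would construct $C^1$ approximations $\bar u_n, \psi_n$ of $\bar u, \psi$ that preserve the one-sided slope bounds appearing in~\eqref{cond-iniital-terminal-data}, then apply the $C^1$ theorem to obtain controls $h_n$ and solutions $u_n$, and finally pass to the limit using the uniform BV bounds~\eqref{tot-bar-bound1},~\eqref{tot-bar-bound2}.

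\textbf{Step 1 (smooth approximation preserving one-sided slopes).} The key observation is that $\sup_x \lfloor D^-\bar u(x)\rfloor_- \leq d^-$ is equivalent to $v(x):=\bar u(x)+d^- x$ being nondecreasing on $[a,b]$. Extending $\bar u$ beyond $[a,b]$ by its one-sided limits and convolving $v$ with a standard nonnegative mollifier $\phi_n$, I obtain a smooth nondecreasing approximation $v_n$; setting $\bar u_n(x):=v_n(x)-d^- x$ I get $\bar u_n'(x)=v_n'(x)-d^-\geq -d^-$, hence $\sup\lfloor \bar u_n'\rfloor_-\leq d^-$. The symmetric decomposition $\psi=w+d^+ x$ with $w$ nonincreasing yields $\psi_n$ with $\sup\lfloor\psi_n'\rfloor_+\leq d^+$. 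Standard mollification gives $\bar u_n\to\bar u$, $\psi_n\to\psi$ a.e. and in $L^1$, uniform bounds $\|\bar u_n\|_\infty\leq\|\bar u\|_\infty$, $\|\psi_n\|_\infty\leq\|\psi\|_\infty$, and, thanks to $\mathrm{Im}(\bar u)\subsetneq I'_1$, $\mathrm{Im}(\psi)\subsetneq I'_2$ strictly, the inclusions $\mathrm{Im}(\bar u_n)\subsetneq I'_1$, $\mathrm{Im}(\psi_n)\subsetneq I'_2$ for all $n$ large (after an infinitesimal shrinking, if necessary).

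\textbf{Step 2 (applying the $C^1$ theorem).} For each such $n$, Theorem~\ref{thm:glob-classic-controllability-convex-1}, applied with $\rho$ replaced by $\rho/2$ to absorb any negligible shift introduced by the regularization, furnishes a source $h_n\in C^0([0,T])$ and a classical solution $u_n\in C^1([0,T]\times[a,b])$ with $u_n(0,\cdot)=\bar u_n$, $u_n(T,\cdot)=\psi_n$, satisfying~\eqref{tot-bar-bound1} and~\eqref{tot-bar-bound2} with a constant $C_1$ independent of $n$ (since $\|\bar u_n\|_\infty$, $\|\psi_n\|_\infty$ and the one-sided slopes are uniformly controlled by the data).

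\textbf{Step 3 (compactness and passage to the limit).} By~\eqref{tot-bar-bound1}, $\{h_n\}$ is uniformly bounded in $L^\infty\cap BV([0,T])$; Helly's theorem yields a subsequence converging pointwise a.e.\ to some $h\in BV([0,T])$ satisfying~\eqref{tot-bar-bound3} by lower semicontinuity. For $\{u_n\}$, the bound~\eqref{tot-bar-bound2} gives uniform $L^\infty$ and spatial BV control. Temporal $L^1$-equicontinuity follows from the classical identity $\partial_t u_n = -f'(u_n)\partial_x u_n + h_n$, whose right-hand side is bounded in $L^1([a,b])$ uniformly in $t$. A Helly--Kolmogorov argument then yields a subsequence converging a.e.\ and in $L^1([0,T]\times[a,b])$ to some $u$ with $u(t,\cdot)\in BV([a,b])$ uniformly in $t$. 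Each $u_n$, being $C^1$, satisfies pointwise the entropy equality $\partial_t\eta(u_n)+\partial_x q(u_n)=\eta'(u_n)h_n$ for every entropy/entropy flux pair, and thus the weak entropy inequality against any nonnegative test function; by dominated convergence this inequality is preserved in the limit, so $u$ is an entropy admissible distributional solution of~\eqref{eq:blaw}. The initial and terminal conditions $u(0,\cdot)=\bar u$, $u(T,\cdot)=\psi$ follow from $\bar u_n\to\bar u$ and $\psi_n\to\psi$ in $L^1$. The bounds~\eqref{tot-bar-bound3},~\eqref{tot-bar-bound4} are obtained by lower semicontinuity of $\|\cdot\|_{L^\infty}$ and of the total variation applied to~\eqref{tot-bar-bound1},~\eqref{tot-bar-bound2}.

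The main obstacle is Step~1: engineering a smooth approximation that rigorously respects the one-sided Dini bounds \emph{with no loss} (up to an arbitrarily small margin). The monotone decomposition $\bar u=v-d^- x$ is precisely the device that resolves this, because nonnegative-kernel mollification preserves monotonicity. A secondary technical point is verifying that the (arbitrarily small) shrinking of $I'_1,I'_2$ needed so that the mollified images remain strictly inside changes $[|f|]_{I'_i}$ and the corresponding $\arg\mathrm{sup}$ quantities only negligibly; this follows from continuity of these functionals in the enclosing interval, and is what allows Theorem~\ref{thm:glob-classic-controllability-convex-1} to be applied uniformly in $n$.
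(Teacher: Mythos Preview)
Your proposal is correct and follows essentially the same route as the paper: the paper isolates your Step~1 as a separate lemma (Lemma~\ref{approx-bvmaps}), proved by exactly the monotone decomposition $\varphi(x)-Mx$ plus mollification that you describe, and then carries out the application of Theorem~\ref{thm:glob-classic-controllability-convex-1} and the Helly-type compactness/limit argument just as in your Steps~2--3. Your treatment of the image-interval issue (shrinking $I'_i$ slightly and invoking continuity of $[|f|]_{I'_i}$) is in fact more explicit than the paper's, which simply asserts $\mathrm{Im}(\overline u_n)\subsetneq I'_1$ without comment.
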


\begin{theorem}\label{thm:glob-controllability-convex-1b} 
Let $f$ be a convex   map satisfying  the assumptions {\bf (H1)} and\,  {\bf (H2)-(ii)} or {\bf (H2)-(iii)}.
Then, given  any $a<b$, and $T> 0$,
for  every $\overline u\in BV([a,b])$ and $\psi\in BV([a,b])$, with
\begin{equation}
\label{cond-iniital-terminal-data-22}
d^-\doteq \sup_{x\in[a,b]} \left\lfloor D^- \overline u(x)\right\rfloor_-<+\infty\,,
\qquad\quad d^+\doteq \sup_{x\in[a,b]} \left\lfloor D^+\psi(x)\right\rfloor_+<+\infty\,,
\end{equation}
there exists $h\in BV([0,T])$ 
so that the Cauchy problem~\eqref{eq:blaw}, \eqref{eq:datum} admits an entropy weak solution
on $[0,T]\times[a,b])$,  that satisfies~\eqref{terminal-datum}.
\end{theorem}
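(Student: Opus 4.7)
The plan is to reduce Theorem~\ref{thm:glob-controllability-convex-1b} to Theorem~\ref{thm:glob-controllability-convex-1} by truncating the flux $f$ to a large bounded open subinterval of $I$, in the spirit of the formal deduction outlined in the remark following Theorem~\ref{thm:glob-classic-controllability-convex-2}. Since Theorem~\ref{thm:glob-controllability-convex-1} requires hypothesis~\textbf{(H2)-(i)}, the key observation is that~\textbf{(H2)-(ii)} or~\textbf{(H2)-(iii)} allows us to choose the truncation so large that, on the truncated interval, $f$ automatically satisfies~\textbf{(H2)-(i)} (because $f\in C^2(I)$) and at the same time the parameters controlling the hypotheses of Theorem~\ref{thm:glob-controllability-convex-1} behave favourably.

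Assume for definiteness that~\textbf{(H2)-(ii)} holds, so that $i_+=+\infty$. Set $M\doteq \max\{\|\overline u\|_{{\bf L}^\infty([a,b])},\|\psi\|_{{\bf L}^\infty([a,b])}\}$, fix a bounded subinterval $I'\subsetneq I$ containing the essential images of $\overline u$ and $\psi$ (for instance $I'=(\alpha, M+1)$ with $\alpha<-M-1$ and $\alpha>i_-$ if $i_->-\infty$), and for a large parameter $N$ define $J_N\doteq(-N,N)\cap I$. The restriction $f|_{J_N}$ is a $C^2$ map on a bounded open interval and hence satisfies~\textbf{(H1), (H2)-(i)}. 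Writing $[|f|]^{J_N}_{I'}$ for the quantity defined in the paper with ambient $I$ replaced by $J_N$, the asymptotic~\eqref{derf-[||]-asympt} (which extends to $J_N$ by the same argument) combined with the growth condition in~\textbf{(H2)-(ii)} gives
\[
[|f|]^{J_N}_{I'}\ \longrightarrow\ +\infty,\qquad \frac{[|f|]^{J_N}_{I'}}{\|f''\|_{C^0(J_N)}}\ \longrightarrow\ +\infty\qquad \text{as }N\to+\infty.
\]

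Consequently, for any prescribed $T>0$, as soon as $N$ is large enough the corresponding controllability time $T^*(N)\doteq 2(b-a)/[|f|]^{J_N}_{I'}$ is strictly less than $T$, and the finite one-sided Lipschitz constants $d^-, d^+$ defined in~\eqref{cond-iniital-terminal-data-22} satisfy the strict inequalities~\eqref{cond-iniital-terminal-data} (with $I'_1=I'_2=I'$ and ambient $J_N$) for a suitable fixed $\rho>0$. Applying Theorem~\ref{thm:glob-controllability-convex-1} to the truncated flux $f|_{J_N}$ then produces a source $h\in BV([0,T])$ and an entropy weak solution $u$ of~\eqref{eq:blaw}, \eqref{eq:datum} on $[0,T]\times[a,b]$ satisfying the terminal condition~\eqref{terminal-datum}, together with the $L^\infty$-bound
\[
\|u(t,\cdot)\|_{{\bf L}^\infty([a,b])}\leq C_2(N)\,(2M+d^-+d^+)\qquad\forall\,t\in[0,T].
\]

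The main obstacle is closing the circular dependence: one must ensure that $u(t,\cdot)$ actually takes values in $J_N$, so that $u$ is also an entropy weak solution of the \emph{original} balance law on $I$. This reduces to selecting $N$ such that $C_2(N)(2M+d^-+d^+)<N$. A careful inspection of how $C_2(N)$ depends on the truncation---namely through $\|f''\|_{C^0(J_N)}$, through $T^*(N)$, and through $\arg\mathrm{sup}[|f|]^{J_N}_{I',c_2}$---combined with the requirement in~\textbf{(H2)-(ii)} that $\|f''\|_{C^0(J_N)}$ grow strictly more slowly than $|f'(N)|$, shows that $C_2(N)$ grows strictly sublinearly in $N$, and the required inequality is therefore satisfied for every sufficiently large $N$. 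The case~\textbf{(H2)-(iii)} is handled symmetrically by truncating from below instead.
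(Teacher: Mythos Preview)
Your approach is correct and coincides with the paper's: truncate $f$ to a bounded subinterval (the paper uses $I_{u_0}=(i_-,u_0)$, you use $J_N$), observe that the restricted flux satisfies \textbf{(H1), (H2)-(i)} with $[|f|]_{I'}$ arbitrarily large, and then apply the bounded-domain BV result. The paper actually invokes Theorem~\ref{thm:glob-classic-controllability-convex-1} on $C^1$ approximants and reruns the compactness argument of Theorem~\ref{thm:glob-controllability-convex-1}; your direct appeal to Theorem~\ref{thm:glob-controllability-convex-1} is slightly cleaner and equally valid.

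Your final paragraph, however, solves a problem that does not exist. When Theorem~\ref{thm:glob-controllability-convex-1} is applied to the truncated flux $f|_{J_N}$, the entropy weak solution it produces is \emph{by definition} a map $u:[0,T]\times[a,b]\to J_N$. Since $J_N\subset I$ and every entropy/entropy-flux pair for $f$ on $I$ restricts to one on $J_N$, this $u$ is automatically an entropy weak solution of the original balance law on $I$; no inequality of the form $C_2(N)(2M+d^-+d^+)<N$ is required. This is fortunate, because your claim that $C_2(N)$ grows strictly sublinearly in $N$ is doubtful: the constant $C_2$ depends on $\arg\mathrm{sup}[|f|]^{J_N}_{I',c_2}$, which is of order $N$ (one must shift by roughly $N$ to reach the region where $|f'|$ is large), so one should expect $C_2(N)$ to grow at least linearly. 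You can simply drop that paragraph.
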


\begin{remark}
By the proofs of Theorem~\ref{thm:glob-controllability-convex-1}
it follows that, in its setting, we obtain an approximate controllability result for classical solutions.
Namely, if $\overline u$, $\psi$ are BV states that satisfy conditons~
\eqref{cond-iniital-terminal-data},
then for any $T>T^*$,
and for every fixed $\varepsilon>0$, there exist $h\in C^0([0,T])$ and
a classical solution $u\in C^1([0,T]\times[a,b])$ of~\eqref{eq:blaw} that satisfies
\begin{equation}
\label{approx-controll}
\big\|u(0,\cdot)- \overline u\big\|_{{\bf L^1}([a,b])}<\varepsilon\,,\qquad\quad
\big\|u(T,\cdot)- \psi\big\|_{{\bf L^1}([a,b])}<\varepsilon\,.
\end{equation}
On the other hand, in the setting of Theorem~\ref{thm:glob-controllability-convex-1b}
the same type of approximate controllability result holds for any pair of initial and terminal data 
$\overline u\in BV([a,b]), \psi\in BV([a,b])$. In fact, in this case we can approximate $\overline u, \psi$ with $\overline u_\varepsilon$, $\psi_\varepsilon \in C^1([a,b])$
so that $\|\overline u-\overline u_\varepsilon\|_{\bf L^1}<\varepsilon$,\, $\|\psi-\psi_\varepsilon\|_{\bf L^1}<\varepsilon$.
Then, for any $T>0$, 
and for every fixed $\varepsilon>0$,
applying Thorem~\ref{thm:glob-classic-controllability-convex-2} we deduce the existence
of $h\in C^0([0,T])$ and of
a classical solution $u\in C^1([0,T]\times[a,b])$ of~\eqref{eq:blaw} that satisfies~\eqref{approx-controll}.
\end{remark}

\begin{remark}
\label{concave-convex-2}
If $f$ is a concave  map satisfying  the assumptions {\bf (H1), (H2)-(i)} and $[|f|]_{I'}>0$ for $I'\subseteq I$,
or  {\bf (H1)} and\,  {\bf (H2)-(ii)},  or {\bf (H1)} and\, {\bf (H2)-(iii)},
then the same conclusions of Theorem~\ref{thm:glob-controllability-convex-1} 
and of Theorem~\ref{thm:glob-controllability-convex-1b} hold, replacing
$\lfloor D^- \overline u(x)\rfloor_-$ with $\lfloor D^+ \overline u(x)\rfloor_+$
and $\lfloor D^+\psi(x)\rfloor_+$ with $\lfloor D^-\psi(x)\rfloor_-$
in the inequalities~\eqref{bound-initial-terminal-data-2}, \eqref{tot-bar-bound2}.
\end{remark}

\begin{remark}
Theorem~\ref{thm:glob-controllability-convex-1b} shows that, for conservation laws with convex or concave fluxes satisfying the assumptions
 {\bf (H1)} and\,  {\bf (H2)-(ii)} or {\bf (H2)-(iii)},
by choosing a suitable source term $h$ in~\eqref{eq:blaw}
we can steer in any arbitrarly small time $T>0$, every initial BV state~$\overline u$ which does not admit shock discontinuities to every BV target state $\psi$
which does not admit discontinuities generating a rarefaction wave.
This result is included in the ones established in~\cite{Perrollaz}, but here we obtain the solution u
as limit of smooth solutions, which are easier to handle both for numeric schemes and for treating 
similar problems in the case of diagonal systems of conservation laws. 
\end{remark}

The general strategy adopted  in Section~\ref{sec:contr-class} to establish the main results of the paper is basically an application of the 
so-called {\it return method}
introduced by Coron (see~\cite{coron})
in combination with the analysis of the Riccati equation governing the evolution of the space derivative
of the solutions. In fact, exploiting the a-priori bounds on the solutions of the Riccati equation, we
construct a source control which steers in a minimal time  the initial data $\overline u$ to some constant state, say $w_1$,
that can be quite far from the initial and terminal states $\overline u$, $\psi$. Similarly, one can produce a
source control that steers in minimal time some constant states, say $w_2$ (far away from $\overline u$, $\psi$),
to the terminal state $\psi$. Then, it's straightforward to see that we can connect $w_1$ and $w_2$ in
an arbitarly small time $\tau$, taking $h$ so that $\ds\int_0^{\tau} h(t) dt = w_2-w_1$. 
In the case of convex flux $f$, the explicit construction of the source control allows to provide a-priori estimates on the control and on the
solution  of~\eqref{eq:blaw} in terms of the ${\bf L^\infty}$ norm of $\overline u$, $\psi$,  of the negative variation of $\overline u$,
and of the positive variation of $\psi$. In turn, such a-priori bounds are crucial to establish in Section~\ref{sec:contr-entropy-weak} the corresponding
controllability results in the $BV$-setting.
Some exemplifying applications for traffic flow and sedimentation models 
are illustrated in Section~\ref{sec:appl}.

\section{Global controllability of $C^1$ states}
\label{sec:contr-class}

\subsection{Reduction to null controllability}

Since classical solutions of~\eqref{eq:blaw} are time reversible, 
 we can recover  
 the global controllability of $C^1$ states provided by Theorems~\ref{thm:glob-classic-controllability-nonconvex-1}-\ref{thm:glob-classic-controllability-nonconvex-2}-\ref{thm:glob-classic-controllability-convex-1}-\ref{thm:glob-classic-controllability-convex-2}
  from the null controllability of~\eqref{eq:blaw}.
Thus, it will be sufficient to prove:

\begin{proposition}
\label{C1-null-controll-nonconvex-1}
In the same setting and with the same assumptions of Theorem~\ref{thm:glob-classic-controllability-nonconvex-1},
for any $T>T^*_1$ with $T^*_1\geq 0$  as in~\eqref{T*-def}, and
 for  every~$\overline u\in C^1([a,b])$ with $\mathrm{Im}(\overline u)\subsetneq I'_1$,
and satisfying
\begin{equation}
\label{bound-initial-terminal-data-11}
\|\overline u'\|_{C^0([a,b])}< \!\frac{[|f|]_{I'_1}}{(b-a)
\cdot\! \|f''\|_{C^0(I)}}\,,
\end{equation}
there exists 
$h\in C^0([0,T])$ vanishing at $t=0,T$,
so that the Cauchy problem~\eqref{eq:blaw}, \eqref{eq:datum} admits a classical solution
$u\in C^1([0,T]\times[a,b])$  that satisfies 
\begin{equation}
\label{zero-terminal-datum}
u(T,x)=0\qquad x\in [a,b]\,.
\end{equation}
\end{proposition}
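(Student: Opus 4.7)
The plan is a two-phase implementation of Coron's return method. I split $[0,T]$ into a \emph{return phase} $[0,T_1]$, with $T_1\in(T^*_1,T)$ chosen close to $T^*_1$, on which $\overline u$ is steered to a spatially constant profile by a suitable source $h_1$, and a \emph{decay phase} $[T_1,T]$, on which this constant is brought down to $0$ by a pure ODE adjustment $h_2$ while keeping $u$ spatially constant.

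For the return phase, fix a small $\varepsilon>0$. Using $[|f|]_{I'_1}>0$ together with~\eqref{argsup}, choose a shift $k\neq 0$ with $\overline u([a,b])+[0,k]\subset I$ and $\Delta f(u;k)\geq [|f|]_{I'_1}-\varepsilon$ for all $u\in\overline u([a,b])$; we may assume $k>0$ (the opposite-sign case is symmetric). Let $H\in C^1([0,T_1])$ be a smooth monotone approximation of the linear ramp $t\mapsto kt/T_1$, with $H(0)=0$, $H(T_1)=k$, $H'(0)=H'(T_1)=0$, and set $h_1=H'$; choose $T_1$ close enough to $T^*_1$ that $T_1([|f|]_{I'_1}-\varepsilon)>b-a$. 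The classical solution on $[0,T_1]\times[a,b]$ is then constructed via the method of characteristics with two compatible families. The \emph{interior family} consists of characteristics issued from $(0,x_0)$, $x_0\in[a,b]$, carrying the value $\overline u(x_0)+H(t)$ and moving with velocity $f'(\overline u(x_0)+H(t))$; their position at $t=T_1$ equals $x_0+T_1\Delta f(\overline u(x_0);k)+o(1)>b$, so every such characteristic exits $[a,b]$ through $x=b$ before $T_1$. The \emph{boundary family} consists of characteristics entering through $(t^*,a)$, $t^*\in(0,T_1]$, with the prescribed value $\overline u(a)+H(t^*)$; along each of them the value is the $t^*$-independent function $\overline u(a)+H(t)$, and at time $T_1$ they reach $a+\int_{t^*}^{T_1}f'(\overline u(a)+H(s))\,ds$, which sweeps $[a,b]$ as $t^*\downarrow 0$ by the same average-slope estimate applied with $u=\overline u(a)\in I'_1$. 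The two families share the characteristic issued from $(0,a)$, so they glue compatibly, and the boundary family yields $u(T_1,x)\equiv \overline u(a)+k$ on $[a,b]$, together with $\partial_x u(T_1,\cdot)\equiv 0$ and $\partial_t u(T_1,\cdot)=h_1(T_1)=0$.

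For the decay phase, pick any $h_2\in C^0([T_1,T])$ with $h_2(T_1)=h_2(T)=0$ and $\int_{T_1}^T h_2(s)\,ds=-(\overline u(a)+k)$, and set $u(t,x):=\overline u(a)+k+\int_{T_1}^t h_2(s)\,ds$ on $[T_1,T]\times[a,b]$. This is clearly a classical, spatially constant solution with $u(T,\cdot)\equiv 0$. Gluing at $t=T_1$, the matching of $u$, $\partial_x u$ and $\partial_t u$ from Phase~1 produces $u\in C^1([0,T]\times[a,b])$; defining $h|_{[0,T_1]}=h_1$ and $h|_{[T_1,T]}=h_2$ yields $h\in C^0([0,T])$ with $h(0)=h(T)=0$.

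The main technical obstacle is to rule out characteristic crossings on $[0,T_1]$. Along each interior characteristic, $v=\partial_x u$ satisfies the Riccati equation $\dot v=-f''(u)v^2$, yielding $v(t)=\overline u'(x_0)/\bigl(1+\overline u'(x_0)\int_0^t f''(\overline u(x_0)+H(s))\,ds\bigr)$. The denominator stays strictly positive on $[0,T_1]$ precisely when $\|\overline u'\|_{C^0([a,b])}\cdot T_1\cdot\|f''\|_{C^0(I)}<1$, which is exactly the hypothesis~\eqref{bound-initial-terminal-data-11} once $T_1$ is taken sufficiently close to $T^*_1=(b-a)/[|f|]_{I'_1}$. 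Along boundary characteristics $\partial_x u\equiv 0$ by the $t^*$-independence of values, so the Riccati equation is trivial there. A secondary complication is that $f'(\overline u(x_0)+H(s))$ need not have constant sign when $f$ is nonconvex, so individual position trajectories may be non-monotone in $t$; this is handled by arguing only with the integrated displacement at the final time $T_1$ together with the Jacobian non-degeneracy provided by the Riccati estimate, which guarantees that each point of $[0,T_1]\times[a,b]$ is reached by exactly one characteristic from one of the two families.
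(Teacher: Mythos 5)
Your overall strategy is the same as the paper's (ramp the source by a shift $k$ that realizes $[|f|]_{I'_1}$ up to $\varepsilon$, flush the initial profile out of $[a,b]$ in time $T_1\approx T^*_1$, obtain a spatially constant state, then relax it to $0$, with the Riccati identity preventing gradient blow-up), but the bounded-domain implementation with a single left-boundary family has a genuine gap precisely in the nonconvex case this proposition is about. Since only the \emph{average} $\Delta f(u;k)$ is bounded below, $f'(\overline u(x_0)+H(s))$ may be negative on nontrivial time sets; at such times characteristics near $x=b$ point \emph{into} the domain, and the backward characteristic through a point $(t,b)$ exits through $x=b$ at an earlier time, so its value is determined neither by $\overline u$ nor by your data at $x=a$. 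Hence the interior family plus the family entering at $x=a$ does not cover $[0,T_1]\times[a,b]$ in general, and your closing claim that ``each point is reached by exactly one characteristic from one of the two families'' does not follow from the Riccati/Jacobian estimate: that estimate gives non-crossing (injectivity) of the interior family, not surjectivity onto the rectangle. A second, independent problem is regularity across the characteristic issued from $(0,a)$: on its left your construction gives $\partial_x u\equiv 0$, while along it the interior family carries $\partial_x u=\overline u'(a)\big/\bigl(1+\overline u'(a)\int_0^t f''(\overline u(a)+H(s))\,ds\bigr)\neq 0$ whenever $\overline u'(a)\neq 0$; the glued function is Lipschitz but not $C^1$, so it is not the classical solution $u\in C^1([0,T]\times[a,b])$ the proposition demands (equivalently, the boundary datum $\overline u(a)+H(t)$ is incompatible at the corner $(0,a)$ with $\overline u'(a)\neq 0$).

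Both difficulties are exactly what the paper's proof is built to avoid: it extends $\overline u$ to a $C^1$ function on all of $\R$, equal to constants $\alpha_\pm$ outside a slightly enlarged interval, preserving the image in $I'_1$ and the derivative bound \eqref{bound-initial-terminal-data-11}, and solves the Cauchy problem on the whole line with a source of your type. Then the rectangle is automatically covered by a globally $C^1$ solution (the Riccati bound holds for every $x_0\in\R$), and the displacement estimate shows that every characteristic landing in $[a,b]$ at time $T_1$ starts in the left constant region, giving $u(T_1,\cdot)\equiv\alpha_-+k$ there; your left-boundary family is emulating that constant region, but without the extension (or, alternatively, a right-boundary family plus boundary data smoothed so as to be $C^1$-compatible with $\overline u$ at the corners) the argument does not close. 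Your second phase is fine and coincides with the paper's, up to choosing $h_2$ of one sign so the constant values stay in $I$.
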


\begin{proposition}
\label{C1-null-controll-nonconvex-2}
In the same setting and with the same assumptions of Theorem~\ref{thm:glob-classic-controllability-nonconvex-2},
for any $T> 0$, and
 for  every~$\overline u\in C^1([a,b])$, 
there exists 
$h\in C^0([0,T])$ vanishing at $t=0,T$,
so that the Cauchy problem~\eqref{eq:blaw}, \eqref{eq:datum} admits a classical solution
$u\in C^1([0,T]\times[a,b])$  that satisfies~\eqref{zero-terminal-datum}. 
\end{proposition}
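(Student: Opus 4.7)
The plan is to deduce this statement from Proposition \ref{C1-null-controll-nonconvex-1} by exploiting assumption \textbf{(H2)-(ii)} (the case \textbf{(H2)-(iii)} is symmetric) to make characteristic speeds arbitrarily large, thereby shrinking the effective controllability time below any preassigned $T>0$. I would split $[0,T]$ into three sub-intervals $[0,\tau_1]$, $[\tau_1,\tau_1+\tau_2]$, $[\tau_1+\tau_2,T]$ with $\tau_1,\tau_2>0$ chosen small and $\tau_1+\tau_2<T$, and introduce a large parameter $M>0$ to be tuned.

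On the first interval (\emph{lifting phase}), I would pick a nonnegative $h\in C^0([0,\tau_1])$ vanishing at the endpoints of $[0,\tau_1]$ and with $\int_0^{\tau_1}h(s)\,ds=M$. Along characteristics $\dot x=f'(u)$ the balance law reads $\dot u=h(t)$, so a short-time existence argument (valid provided $\tau_1$ is small compared with the inverse of $\|\partial_x u\|_{C^0}\cdot\|f''\|_{C^0}$ on the range being visited) yields a classical solution with $u(\tau_1,\cdot)$ a $C^1$ perturbation of $\overline u(\cdot)+M$, whose image $I'_M$ lies in a bounded neighbourhood of $M$ and whose $C^0$ space-derivative is controlled by $2\|\overline u'\|_{C^0([a,b])}$. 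The relevant a priori bound comes from the Riccati equation for $p=\partial_x u$ along characteristics, $\dot p+f''(u)\,p^2=0$, which is insensitive to $h$ since $h$ depends only on time.

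On the middle interval (\emph{fast clearing phase}), I would invoke Proposition \ref{C1-null-controll-nonconvex-1} to drive $u(\tau_1,\cdot)$ to the zero state. Because \textbf{(H2)-(ii)} does not supply \textbf{(H2)-(i)}, I would first replace $f$ by a $C^2$ truncation $\widetilde f$ coinciding with $f$ on a bounded interval $J\supset I'_M\cup\{0\}$ and becoming affine outside a slightly enlarged domain, so that $\widetilde f$ satisfies \textbf{(H2)-(i)}. Proposition \ref{C1-null-controll-nonconvex-1} applied to $\widetilde f$ with initial datum $u(\tau_1,\cdot)$ requires
\[
\frac{b-a}{[|\widetilde f|]_{I'_M}}<\tau_2
\qquad\text{and}\qquad
\|\partial_x u(\tau_1,\cdot)\|_{C^0}<\frac{[|\widetilde f|]_{I'_M}}{(b-a)\,\|\widetilde f''\|_{C^0}}.
\]
Both become trivial for $M$ sufficiently large: by \textbf{(H2)-(ii)} one has $[|\widetilde f|]_{I'_M}\gtrsim|f'(M)|\to+\infty$ and the ratio $|f'(M)|/\sup_{(i_-,\,M+1)}|f''|\to+\infty$, while $\|\partial_x u(\tau_1,\cdot)\|_{C^0}$ stays bounded by $2\|\overline u'\|_{C^0}$. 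The solution supplied by Proposition \ref{C1-null-controll-nonconvex-1} takes values in a sub-interval of $J$ where $f=\widetilde f$, hence it also solves the original balance law; at time $\tau_1+\tau_2$ it reaches the constant state $0$. Setting $h\equiv 0$ and $u\equiv 0$ on $[\tau_1+\tau_2,T]$ and concatenating, one obtains a continuous source vanishing at $0$ and $T$ together with the desired $C^1$ solution.

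The main obstacle is the lifting phase: one has to ensure simultaneously that (a) no characteristics cross on $[0,\tau_1]\times[a,b]$, (b) the range of $u(\tau_1,\cdot)$ lies in the chosen interval $J$ where the truncation is valid, and (c) the derivative bound used in the clearing phase persists. The order of choice should be: first fix $\tau_2<T$, then $M$ large enough to dominate the constraints imposed by Proposition \ref{C1-null-controll-nonconvex-1}, and finally $\tau_1\in(0,T-\tau_2)$ small enough relative to $M$ and $\|\overline u\|_{C^1}$ to keep the Riccati bound effective. The case \textbf{(H2)-(iii)} is handled identically, replacing the lifting toward $+\infty$ by a lifting toward $-\infty$ (using $h\le 0$ with $\int_0^{\tau_1}h=-M$).
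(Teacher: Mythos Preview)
Your overall strategy---reduce to Proposition~\ref{C1-null-controll-nonconvex-1} by exploiting \textbf{(H2)-(ii)} to push the quantity $[|f|]_{I'_1}$ (and the ratio $[|f|]_{I'_1}/\|f''\|_{C^0}$) to infinity---is exactly the paper's. The paper's execution, however, is much shorter: it applies Proposition~\ref{C1-null-controll-nonconvex-1} \emph{directly} to the original datum $\overline u$, with the flux restricted to $f:(i_-,u_0)\to\R$ for $u_0$ chosen large. By~\eqref{derf-[||]-asympt} and \textbf{(H2)-(ii)} one has $(b-a)/[|f|]_{I'_1,u_0}\to0$ and $[|f|]_{I'_1,u_0}/\|f''\|_{C^0((i_-,u_0))}\to+\infty$, so both the time constraint $T>T_1^*$ and~\eqref{bound-initial-terminal-data-11} hold for $u_0$ large---no preprocessing of $\overline u$ is needed. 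Your separate ``lifting phase'' is therefore redundant: the construction inside Proposition~\ref{C1-null-controll-nonconvex-1} already shifts the solution upward by $\overline k=T_0\,\overline h$ via the source term, and doing this beforehand only adds a layer of bookkeeping (choice of $\tau_1$ relative to $M$, control of $\partial_x u(\tau_1,\cdot)$, etc.).

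Your argument also carries a gap that you should close. After truncating to $\widetilde f$, you assert that the solution produced by Proposition~\ref{C1-null-controll-nonconvex-1} stays in the interval $J$ where $\widetilde f=f$. But that solution ranges (by the explicit formula~\eqref{z0(t)}) over roughly $[0,\max I'_M+\overline k]$, where $\overline k$ is the shift selected \emph{inside} the proof of Proposition~\ref{C1-null-controll-nonconvex-1}; the black-box statement gives no bound on it, and $\overline k$ depends on $\widetilde f$, which in turn depends on $J$---a circularity. It is easy to break (for instance, observe that $\inf_{I'_M}|f'|$ is already large and is realised as $[|\widetilde f|]_{I'_M}$ with arbitrarily small $|k|$, so $\overline k$ may be taken small; or simply enlarge $J$ and track the explicit construction), but as written the claim is unsupported. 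The paper sidesteps this by restricting the domain of $f$ rather than modifying the flux, so no such consistency check is needed.
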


\begin{proposition}
\label{C1-null-controll-convex-1}
In the same setting and with the same assumptions of Theorem~\ref{thm:glob-classic-controllability-convex-1},
given  any $a<b$,  $\rho>0$,
and $T>T^*_1$, with $T^*_1\geq 0$ as in~\eqref{T*-def},
there exists $C_1>0$ depending on  $b-a,\, T,\, T^*_1$,\, $\arg\text{sup}{[|f|]_{I'_1,c_{1}}}$\, ($c_{1}$ being
 a constant depending on $\rho, T-T^*_1$),
so that the following hold.
  For  every $\overline u\in C^1([a,b])$, with $\mathrm{Im}(\overline u)\subsetneq I'_1$,
and satisfying
  \begin{equation}
\label{bound-initial-terminal-data-22}
\sup_{x\in [a,b]}\lfloor \overline u'(x)\rfloor_- 
\leq \!\frac{[|f|]_{I'_1}}{(b-a)\cdot\! \|f''\|_{C^0(I)}}-\rho\,,
\end{equation}
  there exists 
$h\in C^0([0,T])$ vanishing at $t=0,T$,
with 
\begin{equation}
\label{tot-bar-bound11}
 \|h\|_{C^0([0,T])}+\text{Tot.\!Var.}\{ h;\,[0,T]\}\leq C_1\cdot 
\Big(1+ \|\overline u\|_{C^0([a,b])}
\Big),
\end{equation}
so that the Cauchy problem~\eqref{eq:blaw}, \eqref{eq:datum} admits a classical solution
$u\in C^1([0,T]\times[a,b])$ that satisfies~\eqref{zero-terminal-datum} and 
\begin{equation}
\label{tot-bar-bound22}
 \|u(t,\cdot)\|_{C^0([a,b])}+\text{Tot.\!Var.}\{ u(t,\cdot);\,[a,b]\}\leq C_1\cdot 
\Big( \|\overline u\|_{C^0([a,b])}+\sup_{x\in [a,b]}\lfloor \overline u'(x)\rfloor_-\Big)
\qquad\forall~t\in (0,T)\,.
\end{equation}
\end{proposition}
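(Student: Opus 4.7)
My plan is to apply the return method sketched in the introduction: split $[0,T]$ at some $T_1 \in (T^*_1, T)$ and construct the null-controlling trajectory in two stages. On $[T_1, T]$, if the solution at $t = T_1$ is spatially constant, equal to some $w_1$, then any $h \in C^0([T_1,T])$ vanishing at $t = T$ with $\int_{T_1}^T h(s)\, ds = -w_1$ produces a spatially constant classical solution (governed by the ODE $u'(t) = h(t)$) ending at $u(T,\cdot) = 0$, whose $C^0$ and BV norms on $[T_1,T]$ are of order $|w_1|/(T - T_1)$. Hence the difficulty is concentrated in Stage~1 on $[0,T_1]$: steer $\overline u$ to a constant profile $w_1$ by a continuous source control, keeping the solution classical and meeting the bounds \eqref{tot-bar-bound11}, \eqref{tot-bar-bound22}.

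For Stage~1 I use the substitution $v(t,x) = u(t,x) - H(t)$, $H(t) = \int_0^t h(s)\, ds$, which reduces \eqref{eq:blaw} to the homogeneous equation $v_t + f'(v + H(t))\, v_x = 0$; $v$ is then constant along the characteristics $\dot x = f'(v + H(t))$. Because $[|f|]_{I'_1} > 0$, for $\varepsilon > 0$ sufficiently small (depending on $\rho$ and $T - T^*_1$) I can pick $k$ with $I'_1 + k \subseteq I$ and $\inf_{u \in I'_1}|\Delta f(u;k)| > [|f|]_{I'_1} - \varepsilon$, and set $H(t) \approx kt/T_1$ (tapered so that $h(0) = 0$). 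I then extend $\overline u$ to a $C^1$ function $\widetilde u$ on an enlarged interval $[a',b']$, with $\widetilde u \equiv v_0$ on a tail $[a', a - L]$ for a constant $v_0 \in I'_1$ and a short, monotone transition region whose slope still respects the margin $\rho$ in \eqref{bound-initial-terminal-data-22}; the case $k < 0$ is handled symmetrically by extending to the right. The characteristic flow $\Phi_t(x_0) = x_0 + \int_0^t f'(\widetilde u(x_0) + H(s))\, ds$ then defines a classical solution $u(t,x) = \widetilde u(\Phi_t^{-1}(x)) + H(t)$ wherever $\Phi_t$ is a diffeomorphism. Because $\int_0^{T_1} f'(v_0 + H(s))\, ds = T_1 \cdot \Delta f(v_0;k) > T_1([|f|]_{I'_1} - \varepsilon) > b - a$ for $T_1$ chosen suitably close to $T^*_1$ in $(T^*_1, T)$, the image $\Phi_{T_1}([a', a - L])$ covers $[a,b]$, whence $u(T_1, \cdot) \equiv v_0 + k =: w_1$ on $[a,b]$. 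Gluing Stages~1 and~2 at $t = T_1$ preserves $C^1$ regularity because $u_x(T_1,\cdot) = 0$ on the constant tail image.

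Classical regularity on $[0,T_1]$ is the other delicate point, and this is where \eqref{bound-initial-terminal-data-22} enters through the Riccati equation $\dot v = -f''(u) v^2$ satisfied along characteristics by $v = u_x$. Differentiating $\Phi_t$ in $x_0$ gives
\begin{equation*}
\Phi_t'(x_0) \;=\; 1 + \widetilde u'(x_0) \int_0^t f''(\widetilde u(x_0) + H(s))\, ds,
\end{equation*}
and this is precisely the reciprocal of the quantity whose finiteness is equivalent to no gradient blow-up. By convexity $f'' \geq 0$, so only points with $\widetilde u'(x_0) < 0$ are problematic; at these I combine $\int_0^{T_1} f''(\widetilde u(x_0) + H(s))\, ds \leq T_1 \|f''\|_{C^0(I)}$ with \eqref{bound-initial-terminal-data-22} (arranged to hold also on the transition region by choosing $L$ large enough) to get a uniform bound $\Phi_t'(x_0) \geq \rho\, T_1\, \|f''\|_{C^0(I)} > 0$, which in turn yields uniform control on $\Phi_t^{-1}$ and on $u_x$, and hence on $\mathrm{Tot.\!Var.}\{u(t,\cdot);[a,b]\}$ as required by \eqref{tot-bar-bound22}. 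Finally $h$ is tapered smoothly to vanish at $t = 0$ and $t = T$ without altering the construction, and the concave case (Remark~\ref{concave-convex-1}) follows by the obvious sign change.

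The main obstacle, beyond these core ingredients, is bookkeeping: tracking all constants so as to produce a single $C_1 = C_1(b - a, T, T^*_1, \arg\mathrm{sup}\,[|f|]_{I'_1, c_1})$ satisfying the claimed bounds. The shift $|k|$ is essentially $|\arg\mathrm{sup}\,[|f|]_{I'_1, c_1}|$ for a parameter $c_1 = c_1(\rho, T - T^*_1)$, so it depends only on the data allowed in $C_1$; the constant $w_1 = v_0 + k$ can be kept comparable to $\|\overline u\|_{C^0} + |k|$ by selecting $v_0 \in I'_1$ appropriately; and the transition length $L$ and slope in $\widetilde u$ are determined jointly by $|w_1 - \overline u(a)|$ and by the slope margin $\rho$, which controls both $\mathrm{Tot.\!Var.}(\widetilde u)$ and hence $\mathrm{Tot.\!Var.}\{u(t,\cdot);[a,b]\}$. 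Carefully collecting these dependencies yields \eqref{tot-bar-bound11} and \eqref{tot-bar-bound22}; this is where the proof becomes somewhat technical, but each individual step is explicit by construction.
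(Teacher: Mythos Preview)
Your plan is essentially the paper's proof: the same return-method splitting at a time $T_1$ just above $T^*_1$, the same extension of $\overline u$ to a $C^1$ function with constant tails, the same characteristic construction with $H$ essentially linear (tapered at the ends), and the same Riccati argument exploiting $f''\ge 0$ to rule out gradient blow-up from the one-sided hypothesis~\eqref{bound-initial-terminal-data-22}.

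The only place where your exposition diverges is the total-variation bound~\eqref{tot-bar-bound22}. You write ``uniform control on $u_x$'', but a pointwise $L^\infty$ bound on $u_x$ would involve $\|\overline u'\|_{C^0}$ (from the positive-slope characteristics), not $\sup\lfloor\overline u'\rfloor_-$, and so would not yield~\eqref{tot-bar-bound22}. The paper sidesteps this by invoking directly that entropy solutions of scalar balance laws with a time-only source have nonincreasing total variation, whence $\mathrm{Tot.Var.}\{u(t,\cdot);[a,b]\}\le\mathrm{Tot.Var.}\{\widetilde u;\R\}\le 2\,\mathrm{Tot.Var.}\{\overline u;[a,b]\}$, and then uses $\mathrm{Tot.Var.}\{\overline u;[a,b]\}\le 2(1+(b-a))\big(\|\overline u\|_{C^0}+\sup\lfloor\overline u'\rfloor_-\big)$. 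In your framework the equivalent step is the change of variables $x=\Phi_t(x_0)$, which gives $\int_a^b|u_x(t,x)|\,dx=\int_{\Phi_t^{-1}([a,b])}|\widetilde u'(x_0)|\,dx_0$ (since $u_x(t,\Phi_t(x_0))\,\Phi_t'(x_0)=\widetilde u'(x_0)$); this is what you need, and it requires choosing the tail value $v_0\in\mathrm{Im}(\overline u)$ (not merely $v_0\in I'_1$) so that the transition contributes at most $O(\|\overline u\|_{C^0})$ to $\mathrm{Tot.Var.}(\widetilde u)$.
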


\begin{proposition}
\label{C1-null-controll-convex-2}
In the same setting and with the same assumptions of Theorem~\ref{thm:glob-classic-controllability-convex-2},
given  any $a<b$ and $T> 0$,
for  every $\overline u\in C^1([a,b])$    there exists 
$h\in C^0([0,T])$ vanishing at $t=0,T$,
so that the Cauchy problem~\eqref{eq:blaw}, \eqref{eq:datum} admits a classical solution
$u\in C^1([0,T]\times[a,b])$ that satisfies~\eqref{zero-terminal-datum}.
\end{proposition}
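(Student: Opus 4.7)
The plan is to deduce this result from Proposition~\ref{C1-null-controll-convex-1} via a truncation-and-limit argument that exploits the unbounded growth of $|f'|$ built into (H2)-(ii) (the case (H2)-(iii) is symmetric; concave fluxes are handled by the obvious sign adaptation described in Remark~\ref{concave-convex-1}). Without loss of generality I assume $f$ is convex and $i_+ = +\infty$ with (H2)-(ii). Given $\overline u \in C^1([a,b])$ and $T > 0$, I fix a bounded open interval $I'_1 \subset I$ with $\mathrm{Im}(\overline u) \subsetneq I'_1$ and, for a parameter $M > \max I'_1$ to be chosen, introduce the truncated flux $f_M \doteq f|_{(i_-, M+1)}$, which satisfies (H1) and (H2)-(i) on its domain.

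The two quantities governing the applicability of Proposition~\ref{C1-null-controll-convex-1} to $f_M$ are $[|f|]_{I'_1, M+1}$ (in the sense of \eqref{derf-[||]-def}) and $\|f''\|_{C^0((i_-, M+1))}$. Since $|f'(u)| \to +\infty$ as $u \to +\infty$ (so in particular $f(u)/u \to +\infty$ by convexity), taking the shift $k = M + 1 - \max I'_1$ in the definition of $[|f|]_{I'_1, M+1}$ gives $\inf_{v \in I'_1} |\Delta f(v;k)| \to +\infty$ as $M \to +\infty$, hence $[|f|]_{I'_1, M+1} \to +\infty$. Moreover, the asymptotic \eqref{derf-[||]-asympt} gives $[|f|]_{I'_1, M+1}/|f'(M+1)| \to 1$, and the third part of (H2)-(ii) yields $|f'(M+1)|/\|f''\|_{C^0((i_-, M+1))} \to +\infty$, so
\[
\frac{[|f|]_{I'_1, M+1}}{\|f''\|_{C^0((i_-, M+1))}} \;\longrightarrow\; +\infty \qquad \text{as } M \to +\infty.
\]
Therefore, for $M$ sufficiently large (depending on $T$ and $\overline u$), the hypotheses $T > T^{*,M}_1 \doteq (b-a)/[|f|]_{I'_1, M+1}$ and \eqref{bound-initial-terminal-data-22} of Proposition~\ref{C1-null-controll-convex-1} applied to $f_M$ are simultaneously met with some fixed $\rho > 0$.

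Applying Proposition~\ref{C1-null-controll-convex-1} to $f_M$ then produces a source $h \in C^0([0,T])$ vanishing at $t = 0, T$ and a classical solution $u \in C^1([0,T] \times [a,b])$ of the Cauchy problem associated to $f_M$ with $u(T, \cdot) \equiv 0$, subject to the $C^0$-bound in \eqref{tot-bar-bound22}. The main obstacle is to verify that this $u$ actually takes values in $(i_-, M+1)$, so that it is also a solution of the original (untruncated) balance law. This is the circular point of the argument: the constant $C_1$ in \eqref{tot-bar-bound22} depends on $M$ through $T^{*,M}_1 \to 0$ and through $\arg\text{sup}[|f|]_{I'_1, c_1}$, the latter being bounded above by $M - \max I'_1$ (since admissible shifts $k$ in \eqref{derf-[||]-def} satisfy $I'_1 + k \subseteq (i_-, M+1)$), while $\rho$ and $T - T^{*,M}_1$ stay uniformly bounded below. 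Since $C_1$ thus grows subleadingly compared to $M$, selecting $M$ larger than $C_1\bigl(\|\overline u\|_{C^0} + \sup_{[a,b]} \lfloor \overline u' \rfloor_-\bigr) + \max I'_1 + 1$ closes the loop and ensures $u([0,T] \times [a,b]) \subset (i_-, M+1)$, which concludes the proof.
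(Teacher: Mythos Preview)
Your reduction is exactly the paper's route: restrict $f$ to a bounded interval $(i_-,M+1)$, use {\bf (H2)-(ii)} together with \eqref{derf-[||]-asympt} to make $T^{*,M}_1\to 0$ and the right-hand side of \eqref{bound-initial-terminal-data-22} blow up, and then invoke Proposition~\ref{C1-null-controll-convex-1} for the restricted flux. Up to the point where you apply Proposition~\ref{C1-null-controll-convex-1}, this is correct and matches the paper verbatim (cf.\ the proof of Proposition~\ref{C1-null-controll-nonconvex-2}).

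The final paragraph, however, manufactures a difficulty that is not there and then resolves it incorrectly. Once Proposition~\ref{C1-null-controll-convex-1} is applied to the flux $f_M:(i_-,M+1)\to\R$, the resulting $u$ is by hypothesis a \emph{classical} solution of $\partial_t u+\partial_x f_M(u)=h$; for this equation to make sense pointwise one must have $u(t,x)\in\mathrm{dom}(f_M)=(i_-,M+1)$ for all $(t,x)$. Hence $u$ automatically takes values in $(i_-,M+1)\subset I$, and since $f_M=f$ there, $u$ solves the original balance law without any further check. Your loop-closing argument via \eqref{tot-bar-bound22} is therefore unnecessary. It is also wrong as stated: inspecting the explicit constant produced in the proof of Proposition~\ref{C1-null-controll-convex-1}, one has
\[
C_1\;\geq\;\frac{(6T+3T_0)(1+|\overline k|)}{T_0\,(T-T_0)}\;\sim\;\frac{|\overline k|}{T_0}\;\sim\;\frac{M\cdot[|f|]_{I'_1,M+1}}{b-a}\;\sim\;\frac{M\,|f'(M)|}{b-a}
\qquad (M\to\infty),
\]
which for the Burgers flux already gives $C_1\sim M^2$, not sublinear in $M$. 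Drop the last paragraph and simply observe that the solution furnished by Proposition~\ref{C1-null-controll-convex-1} for $f_M$ necessarily ranges in $(i_-,M+1)$; this is how the paper concludes as well.
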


The following lemmas show that
Theorems~\ref{thm:glob-classic-controllability-nonconvex-1}-\ref{thm:glob-classic-controllability-nonconvex-2}-\ref{thm:glob-classic-controllability-convex-1}-\ref{thm:glob-classic-controllability-convex-2}
are indeed a consequence of Proposition~\ref{C1-null-controll-nonconvex-1}-\ref{C1-null-controll-nonconvex-2}-\ref{C1-null-controll-convex-1}-\ref{C1-null-controll-convex-2}.
\smallskip

\begin{lemma}
\label{first-implication}
Proposition~\ref{C1-null-controll-nonconvex-1} \ $\Longrightarrow$ \ Theorem~\ref{thm:glob-classic-controllability-nonconvex-1}\,,
\  \ Proposition~\ref{C1-null-controll-nonconvex-2} \ $\Longrightarrow$ \ Theorem~\ref{thm:glob-classic-controllability-nonconvex-2}\,.
\end{lemma}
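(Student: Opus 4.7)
The plan is to reduce the controllability between $\overline u$ and $\psi$ to a pair of null-controllability problems joined at a common zero state. For Theorem~\ref{thm:glob-classic-controllability-nonconvex-1}, I fix $T>T^*=T_1^*+T_2^*$ and a splitting $T=T_1+T_2$ with $T_i>T_i^*$ ($i=1,2$). The first piece comes directly from Proposition~\ref{C1-null-controll-nonconvex-1} applied to $\overline u$ on $[0,T_1]$: it supplies a control $h_1\in C^0([0,T_1])$ vanishing at the endpoints and a classical solution $u_1\in C^1([0,T_1]\times[a,b])$ of \eqref{eq:blaw} steering $\overline u$ to the zero state.

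The second piece exploits the time-reversibility of classical solutions. A direct computation shows that if $\tilde u$ solves $\partial_s\tilde u+\partial_x(-f)(\tilde u)=\tilde h(s)$, then $u(t,x)\doteq \tilde u(T-t,x)$ solves $\partial_tu+\partial_xf(u)=-\tilde h(T-t)$. Since $\|(-f)''\|_{C^0(I)}=\|f''\|_{C^0(I)}$ and $[|{-}f|]_{I'_2}=[|f|]_{I'_2}$, the flux $-f$ still satisfies {\bf (H1), (H2)-(i)}, and the hypothesis \eqref{bound-initial-terminal-data-1} on $\psi$ coincides with \eqref{bound-initial-terminal-data-11} for $-f$ on $I'_2$. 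Applying Proposition~\ref{C1-null-controll-nonconvex-1} to the flux $-f$ with initial datum $\psi$ over the time interval $[0,T_2]$ produces $\tilde u_2\in C^1$ with $\tilde u_2(T_2,\cdot)\equiv 0$, and $\tilde h_2\in C^0([0,T_2])$ vanishing at $\{0,T_2\}$. Setting $u_2(t,x)\doteq \tilde u_2(T-t,x)$ and $h_2(t)\doteq -\tilde h_2(T-t)$ for $t\in[T_1,T]$ then yields a classical solution of \eqref{eq:blaw} going from $0$ at $t=T_1$ to $\psi$ at $t=T$, with $h_2(T_1)=h_2(T)=0$.

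I then concatenate by setting $u=u_1$, $h=h_1$ on $[0,T_1]$ and $u=u_2$, $h=h_2$ on $[T_1,T]$. Both $u_1$ and $u_2$ vanish identically on $\{T_1\}\times[a,b]$, so the spatial derivatives trivially match (both are zero); together with $h(T_1)=0$ this forces $\partial_tu(T_1^\pm,\cdot)\equiv 0$ via the balance law, hence $u\in C^1([0,T]\times[a,b])$ and $h\in C^0([0,T])$, and \eqref{eq:datum}, \eqref{terminal-datum} hold. The implication Proposition~\ref{C1-null-controll-nonconvex-2} $\Longrightarrow$ Theorem~\ref{thm:glob-classic-controllability-nonconvex-2} follows by the same construction with any splitting $T_1,T_2>0$ of $T$ (e.g.\ $T_1=T_2=T/2$); the assumptions {\bf (H2)-(ii)}, {\bf (H2)-(iii)} are invariant under $f\mapsto -f$ since they involve only $|f'|$ and $|f''|$. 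The only delicate point in either case is the $C^1$-matching at $t=T_1$, and this is guaranteed precisely because the propositions deliver controls vanishing at the endpoints of their respective time intervals.
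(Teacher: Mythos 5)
Your argument is correct, and the overall strategy is the same as the paper's: split $T=T_1+T_2$ with $T_i>T_i^*$, use the proposition to drive $\overline u$ to $0$ on $[0,T_1]$, produce a second leg going from $0$ to $\psi$ on $[T_1,T]$ by a reversal argument, and glue at $t=T_1$, where the $C^1$-matching is exactly as you say (both pieces vanish identically at $t=T_1$, and $h(T_1)=0$ forces the time derivatives to vanish from both sides). The only real difference is how the second leg is reduced to the proposition. The paper reverses time \emph{and} reflects space: it applies Proposition~\ref{C1-null-controll-nonconvex-1} verbatim to the same flux $f$ with the reflected datum $\overline u_2(x)=\psi(a+b-x)$, and then sets $u(t,x)=u_2(T-t,a+b-x)$, $h(t)=-h_2(T-t)$, so no hypothesis on a modified flux needs to be checked. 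You instead reverse only time and replace $f$ by $-f$, applying the proposition to the flux $-f$ with datum $\psi$; this is legitimate precisely because, as you verify, {\bf (H1)}, {\bf (H2)} and the quantities $[|f|]_{I'_2}$, $\|f''\|_{C^0(I)}$ entering \eqref{bound-initial-terminal-data-11} and $T_2^*$ are invariant under $f\mapsto -f$, so the hypotheses of Theorem~\ref{thm:glob-classic-controllability-nonconvex-1} on $\psi$ translate exactly into those of the proposition for $-f$. The trade-off is minor: your route must restate the proposition for the sign-flipped flux (which its proof supports, since it only uses the standing assumptions), while the paper's space--time reflection keeps the flux fixed at the cost of a change of spatial variable. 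The same remarks apply to the second implication, where the invariance of {\bf (H2)-(ii)}/{\bf (H2)-(iii)} under the sign flip is again what you need and correctly note.
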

\vspace{-15pt}
\begin{proof}
We provide only a proof of the first implication, the second being entirely similar.
Let $T>T^*$ and, given  $\overline u\in C^1([a,b])$, $\psi\in C^1([a,b])$,
with $\mathrm{Im}(\overline u)\subsetneq I'_1$, $\mathrm{Im}(\psi)\subsetneq I'_2$,
which satisfy~\eqref{bound-initial-terminal-data-1},
set
\begin{equation}
\label{initial-data-def1}
\overline u_1(x)\doteq \overline u(x)\qquad\mathrm{and}\qquad \overline u_2(x)\doteq \psi(a+b-x)\qquad x\in[a,b]\,.
\end{equation}
Observe that $\overline u_1, \overline u_2$ satisfy the assumptions~\eqref{bound-initial-terminal-data-11}
(with $I'_2$ in place of $I'_1$ for $\overline u_2$).
Hence,  by Theorem~\ref{C1-null-controll-nonconvex-1} 
there exist $h_i\in C^0([0,T_i])$, $T_i>T^*_i$, $i=1,2$, 
vanishing at $t=0,T_i$,
and $u_i\in C^1([0,T_i]\times[a,b])$,  $i=1,2$, with $T=T_1+T_2$, that satisfiy 
\begin{equation}\label{eq:blaw-i}
\begin{aligned}
&\partial_t u_i+\partial_x f(u_i)=h_i(t)\,,~~~~t\in [0,T_i],\ \ \ x\in [a,b]\,,
\\ 
\noalign{\smallskip}
&u_i(0,x)=\overline u_i(x),\qquad {u_i(T_i,x)}=0,\ \quad x\in [a,b]\,.
\end{aligned}
\end{equation}
Consider the function 
\begin{equation}
\label{u-def-1}
u(t,x)=
\begin{cases}
u_1(t,x)\quad &\text{if}\qquad t\in[0,T_1],\ \ x\in[a,b],
\\
\noalign{\smallskip}
u_2(T-t,a+b-x)\quad &\text{if}\qquad t\in[T_1,T],\ \ x\in[a,b],
\end{cases}
\end{equation}
and define 
\begin{equation}
\label{h-def-1}
h(t)=
\begin{cases}
h_1(t)\quad &\text{if}\qquad t\in[0,T_1],
\\
\noalign{\smallskip}
-h_2(T-t)\quad &\text{if}\qquad t\in[T_1,T].
\end{cases}
\end{equation}
Then, relying on~\eqref{eq:blaw-i}, by a direct computation it follows that 
$u(t,x)$ is a solution of~\eqref{eq:blaw}. Moreover, since~\eqref{eq:blaw-i}
together with $h_1(T_1)=h_2(T_2)=0$
imply that $u_1(T_1,\cdot)=u_2(T_2,\cdot)=\partial_t u_1(T_1,\cdot)=\partial_t u_2(T_2,\cdot)\equiv 0$, 
we deduce that $u$ is a continuously differentiable map on $[0,T]\times [a,b]$.
Finally, observe that~\eqref{initial-data-def1}, \eqref{eq:blaw-i}, \eqref{u-def-1} yield $u(0,\cdot)=\overline u_1=\overline u$, $u(T,\cdot)=\overline u_2(a+b-\cdot)=\psi$,
which shows that $u$ is a $C^1$ classical solution of~\eqref{eq:blaw} steering the equation from $\overline u$
to~$\psi$.
\end{proof}


\begin{lemma}
Proposition~\ref{C1-null-controll-convex-1}  \ $\Longrightarrow$ \ Theorem~\ref{thm:glob-classic-controllability-convex-1}\,,
\ \ Proposition~\ref{C1-null-controll-convex-2} \ $\Longrightarrow$ \ Theorem~\ref{thm:glob-classic-controllability-convex-2}\,.
\end{lemma}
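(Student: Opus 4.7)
The plan is to mirror the argument already used in Lemma~\ref{first-implication}, adding only the bookkeeping needed for the quantitative bounds \eqref{tot-bar-bound1}--\eqref{tot-bar-bound2}. For the first implication, choose a splitting $T=T_1+T_2$ with $T_i>T^*_i$ and set
$\overline u_1(x)\doteq\overline u(x)$, $\overline u_2(x)\doteq\psi(a+b-x)$ on $[a,b]$.
The reflection gives $\overline u_2'(x)=-\psi'(a+b-x)$, hence
$\sup_x\lfloor\overline u_2'(x)\rfloor_-=\sup_x\lfloor\psi'(x)\rfloor_+$,
so the hypotheses \eqref{bound-initial-terminal-data-2} on $(\overline u,\psi)$
translate exactly into the hypothesis \eqref{bound-initial-terminal-data-22}
on $\overline u_1,\overline u_2$. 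Applying Proposition~\ref{C1-null-controll-convex-1}
twice, once on $[0,T_1]\times[a,b]$ with datum $\overline u_1$ and once on
$[0,T_2]\times[a,b]$ with datum $\overline u_2$, yields controls
$h_i\in C^0([0,T_i])$ vanishing at the endpoints and $C^1$ solutions $u_i$
of~\eqref{eq:blaw-i} driving $\overline u_i$ to zero, together with the
quantitative estimates~\eqref{tot-bar-bound11}--\eqref{tot-bar-bound22}.

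Next I would glue by the formulas~\eqref{u-def-1}--\eqref{h-def-1}.
As in the proof of Lemma~\ref{first-implication}, the vanishing of $h_i$ and
$u_i$ at $t=T_i$, combined with $\partial_t u_i(T_i,\cdot)\equiv 0$ inferred
from the PDE, guarantees that the glued $u$ is of class $C^1$ across $t=T_1$
and that $h\in C^0([0,T])$. A direct substitution, using that the
transformation $(t,x)\mapsto(T-t,a+b-x)$ together with $h\mapsto-h$ preserves
the equation $\partial_t u+\partial_x f(u)=h$, shows that $u$ solves
\eqref{eq:blaw} with source $h$. By construction $u(0,\cdot)=\overline u$ and
$u(T,\cdot)=\overline u_2(a+b-\cdot)=\psi$.

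For the quantitative part, the two $C^0$ norms combine as
$\|h\|_{C^0([0,T])}\le\max_i\|h_i\|_{C^0([0,T_i])}$, while since $h_1(T_1)=h_2(0)=0$
the gluing introduces no jump, so
$\text{Tot.Var.}\{h;[0,T]\}\le\text{Tot.Var.}\{h_1;[0,T_1]\}+\text{Tot.Var.}\{h_2;[0,T_2]\}$.
The analogous bound for $u(t,\cdot)$ follows after noticing
$\|\overline u_2\|_{C^0}=\|\psi\|_{C^0}$ and that the reflection preserves the
total variation. Summing the two instances of \eqref{tot-bar-bound11} and
\eqref{tot-bar-bound22} yields \eqref{tot-bar-bound1}--\eqref{tot-bar-bound2}
with a constant $C_1$ depending only on the data allowed in the statement of
Theorem~\ref{thm:glob-classic-controllability-convex-1}.

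The second implication, Proposition~\ref{C1-null-controll-convex-2}
$\Longrightarrow$ Theorem~\ref{thm:glob-classic-controllability-convex-2},
is obtained by the identical splitting-and-gluing construction, dropping the
verification of the BV bounds since they are not claimed in
Theorem~\ref{thm:glob-classic-controllability-convex-2}. The only point that
requires a comment is that when $f$ is \emph{concave} (the case covered by
Remark~\ref{concave-convex-1}), the reflected flux for $\overline u_2$ is still
concave, but the roles of $\lfloor\,\cdot\,\rfloor_+$ and $\lfloor\,\cdot\,\rfloor_-$
swap under $x\mapsto a+b-x$ exactly as needed; I do not expect any real
obstacle there. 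The main (minor) technical point is the sign tracking through
the reflection and the verification that Tot.Var. and $C^0$ norms are
preserved; everything else is a transcription of Lemma~\ref{first-implication}.
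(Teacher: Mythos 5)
Your proposal is correct and follows essentially the same route as the paper: reflect the target to set $\overline u_2(x)=\psi(a+b-x)$, note $\sup_x\lfloor\overline u_2'(x)\rfloor_-=\sup_x\lfloor\psi'(x)\rfloor_+$, apply the null-controllability proposition twice, glue via \eqref{u-def-1}--\eqref{h-def-1}, and combine the $C^0$ and Tot.Var.\ estimates exactly as in \eqref{u-h-linf-tv-bound-1} to get \eqref{tot-bar-bound1}--\eqref{tot-bar-bound2} with a modified constant. The only inessential slip is your aside about a ``reflected flux'' in the concave case: the transformation $(t,x)\mapsto(T-t,a+b-x)$ together with $h\mapsto-h$ preserves the equation with the \emph{same} flux $f$, so no extra convexity/concavity bookkeeping is needed there.
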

\vspace{-15pt}
\begin{proof}
We provide only a proof of the first implication, the second being entirely similar.
Let $T>T^*$ and, given  $\overline u\in C^1([a,b])$, $\psi\in C^1([a,b])$, 
with $\mathrm{Im}(\overline u)\subsetneq I'_1$, $\mathrm{Im}(\psi)\subsetneq I'_2$,
which satisfy~\eqref{bound-initial-terminal-data-2},
adopting the same setting~\eqref{initial-data-def1}
we observe that
\begin{equation}
\label{initial-data-bound-2}
\begin{aligned}
&\ \  \|\overline u_1\|_{C^0([a,b])}= \|\overline u\|_{C^0([a,b])},\quad
  \|\overline u_2\|_{C^0([a,b])}= \|\psi\|_{C^0([a,b])},\quad \sup_{x\in [a,b]}\lfloor \overline u'_1(x)\rfloor_- =\sup_{x\in [a,b]}\lfloor \overline u'(x)\rfloor_-,
  \\
  \noalign{\medskip}
  &\qquad\quad\mathrm{and}\qquad 
  \sup_{x\in [a,b]}\lfloor \overline u'_2(x)\rfloor_- =\sup_{x\in [a,b]}\lfloor -\psi'(a+b-x)\rfloor_-
  =\sup_{x\in [a,b]}\lfloor \psi'(x)\rfloor_+\,.
\end{aligned}
\end{equation}
Hence, relying on Proposition~\ref{C1-null-controll-convex-1} and
following the same  arguments of the proof of Lemma~\ref{first-implication}
 we deduce that the function $u$ defined in~\eqref{u-def-1}
 is a $C^1$ classical solution of~\eqref{eq:blaw}, with $h$ as in~\eqref{h-def-1}, steering the equation from $\overline u$
to~$\psi$.
Moreover, by Theorem~\ref{C1-null-controll-convex-1} we are assuming that
\begin{equation}
\label{tot-bar-bound12}
 \|h_i\|_{C^0([0,T])}+\text{Tot.\!Var.}\{ h_i;\,[0,T]\}\leq C_1\cdot 
\Big( 1+\|\overline u_i\|_{C^0([a,b])}
\Big),
\quad i=1,2,
\end{equation}
and
\begin{equation}
\label{tot-bar-bound23}
 \|u_i(t,\cdot)\|_{C^0([a,b])}+\text{Tot.\!Var.}\{ u_i(t,\cdot);\,[a,b]\}\leq C_1\cdot 
\Big( \|\overline u_i\|_{C^0([a,b])}+\sup_{x\in [a,b]}\lfloor \overline u'_i(x)\rfloor_-\Big)
\end{equation}
for all $t\in (0,T_i)$, $i=1,2$. Observe that, by~\eqref{u-def-1}-\eqref{h-def-1}, there holds
\begin{equation}
\begin{gathered}
\label{u-h-linf-tv-bound-1}
 \|h\|_{C^0([0,T])}\leq \max_i  \|h_i\|_{C^0([0,T_i])},\qquad\quad
 \text{Tot.\!Var.}\{ h;\,[0,T]\}\leq \sum_i \text{Tot.\!Var.}\{ h_i;\,[0,T_i]\},
  \\
  \noalign{\smallskip}
    \|u(t,\cdot)\|_{C^0([a,b])}\leq
    \begin{cases}
     \|u_1(t,\cdot)\|_{C^0([a,b])}\quad&\text{if}\quad t\in [0,T_1],
     \\
     \|u_2(T-t,\cdot)\|_{C^0([a,b])}\quad&\text{if}\quad t\in [T_1,T],
     \end{cases}
     \\
  \noalign{\smallskip}
  \text{Tot.\!Var.}\{ u(t,\cdot);\,[a,b]\}\leq 
   \begin{cases}
   \text{Tot.\!Var.}\{ u_1(t,\cdot);\,[a,b]\}\quad&\text{if}\quad t\in [0,T_1],
  \\
  \text{Tot.\!Var.}\{ u_2(T-t,\cdot);\,[a,b]\}\quad&\text{if}\quad t\in [T_1,T].
   \end{cases}
  \end{gathered}
\end{equation}
Thus, from~\eqref{tot-bar-bound12}-\eqref{tot-bar-bound23} we deduce that the functions $h$, $u$
defined in~\eqref{h-def-1}, \eqref{u-def-1}, respectively, satisfy the bounds~\eqref{tot-bar-bound1}-\eqref{tot-bar-bound2}
stated in Theorem~\ref{thm:glob-classic-controllability-convex-1} (with a costant $C_1$ different from the one
provided by Proposition~\ref{C1-null-controll-convex-1}).
\end{proof}

\subsection{Null controllability}

\noindent
{\bf Proof of Proposition~\ref{C1-null-controll-nonconvex-1}.}\\
\vspace{-10pt}

\noindent
{\bf 1.} 
%
Given $T>T^*_1$ with $T^*_1\geq 0$  as in~\eqref{T*-def}, 
and $\overline u\in C^1([a,b])$ with $\mathrm{Im}(\overline u)\subsetneq I'_1$,
 satisfying~\eqref{bound-initial-terminal-data-11}, 
  let $\varepsilon_1>0$ be such that
\begin{equation}
\label{T_0-def}
T>T_0\doteq \frac{(b-a)}{[|f|]_{I'_1}}\cdot\left(1+ 
2\,\varepsilon_1\right),
\end{equation}
\begin{equation}
\label{bound-initial-data-111}
\|\overline u'\|_{C^0([a,b])}< 
\!\frac{[|f|]_{I'_1}}{(b-a)\!\cdot\!\left(1+ 
3\,\varepsilon_1\right)\!\cdot\! \|f''\|_{C^0(I)}}\,.
\end{equation}
Then, 
we extend $\overline u$ to a continuously differentiable
function on the entire line $\R$, 
 that we still denote $\overline u$, so that 
 \begin{gather}
 \label{new-initial-data-1}
 \mathrm{Im}(\overline u)\subsetneq I'_1,\qquad\qquad
 \|\overline u'\|_{C^0(\R)}<\!\frac{[|f|]_{I'_1}}{(b-a)\!\cdot\!\left(1+ 
3\,\varepsilon_1\right)\!\cdot\! \|f''\|_{C^0(I)}}\,,
 %
 \\
  \noalign{\medskip}
   \label{new-initial-data-3}
   \overline u(x)=\begin{cases}
   \alpha_-\quad&\text{if}\qquad x\leq a-\varepsilon_1\cdot (b-a)\,,
   \\
   \noalign{\smallskip}
    \alpha_+\quad&\text{if}\qquad x\geq b+\varepsilon_1\cdot (b-a)\,,
   \end{cases}
 \end{gather}
for some constants $\alpha_-, \alpha_+\in\R$.
%
\begin{figure}
\begin{center}
\hspace{-2truecm}\resizebox{!}{4.5truecm}{\input 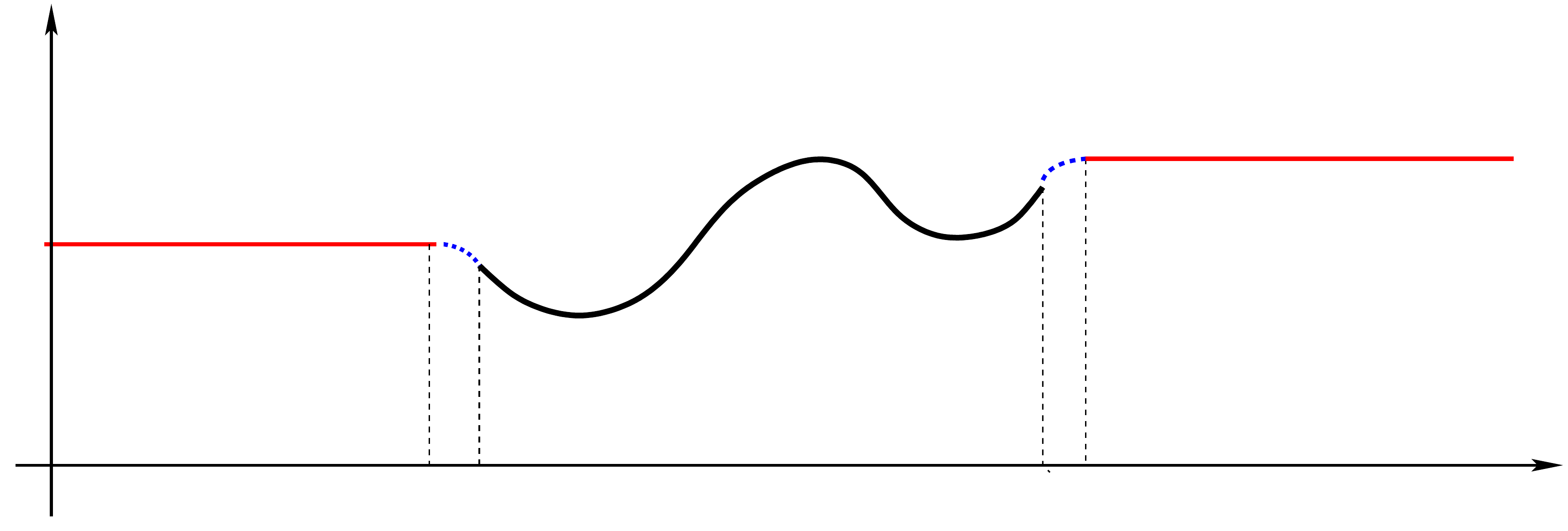_t}
\caption{Extension of the initial data}
\end{center}
\end{figure}
\medskip

Observe that, for any  $h\in C^0([0,+\infty])$,  the Cauchy problem 
\begin{equation}
\label{eq:cpblawonr}
\begin{aligned}
&\partial_t u+\partial_x f(u)=h(t)\,,~~~~t>0,\ x\in\R\,,
\\
\noalign{\medskip}
&\qquad 
u(0,x)~=~\overline{u}(x)\,,~~~~x\in \R\,,
\end{aligned}
\end{equation}
admits a classical solution $u(t,x)$ defined on some maximal interval $\big[0, T^h\big)$.
Given any fixed $x_0\in\R$, let $x(\cdot)$ denote the unique 
forward characteristics of~\eqref{eq:cpblawonr} starting from $x_0$, i.e.
the unique solution of
\begin{equation}
\label{char-eq}
\dot{x}(t)~=~f'(u(t ,x(t))), \quad t\in \big[0, T^h\big),
\end{equation}
satisfying $ x(0)=x_0$.
Then, $z_0(t)\doteq u(t,x(t))$ is a {Carath\'eodory}
solution of 
\[
\dot{z_0}(t)~=~h(t), \quad t\in [0, T^h),\qquad z_0(0)~=~\overline u(x_0)\,.
\]
On the other hand, observe that the function $w(t,x)=\partial_x u(t,x)$ is a broad solution on $[0,T^h\,) \times\R$ of the semilinear equation
\begin{equation}
\label{semil-eq1}
\partial_t w(t,x)+f'(u(t,x))\cdot \partial_x w(t,x)~=~-f''(u(t,x))\cdot w^2(t,x)\,.
\end{equation}
Hence, relying on~\eqref{semil-eq1} we deduce that $z_1(t)\doteq \partial_x u(t,x(t))$ is a {Carath\'eodory}
solution of 
\begin{equation}
\label{ricc-eq}
\dot{z_1}(t)~=~-f''(u(t ,x(t)))\cdot z_1^2(t), \quad t\in [0, T^h\big),\qquad z_1(0)~=~\overline u'(x_0)\,.
\end{equation}
Then, a direct computation yields
\bel{u(t,x(t))}
z_0(t)~=~\overline{u}(x_0)+\int_{0}^th(\tau)d\tau,
\eeq
\bel{x(t)}
x(t)~=~x_0+\int_{0}^{t}f'(z_0(\tau))d\tau~=~x_0+\int_{0}^{t}f'\left(\overline{u}(x_0)+\int_{0}^{\tau}h(s)ds\right)d\tau,
\eeq
and 
\bel{u_x(t,x(t))}
{1\over z_1(t)}~=~{1\over \overline{u}'(x_0)}+\int_{0}^{t}f''(z_0(\tau))d\tau\,,
\eeq
for all $t\in [0, T^h)$.
\quad\\

{\bf 2.} Consider the continuous function 
{
\begin{equation}
\begin{aligned}
\label{def-h-1}
h(t)~&=~\dfrac{t\cdot \overline{h}}{\tau_1}\cdot \chi_{[0,\tau_1]}+ \overline{h}\cdot \chi_{[\tau_1,T_0]}+\dfrac{(T_1-t)\cdot \overline{h}}{\tau_1}\cdot \chi_{[T_0,T_1]}\\
\noalign{\smallskip}
&\quad -\dfrac{16  (t-T_1)\cdot (\alpha+T_0\cdot\overline h\,)}{3\,(T-T_1)^2}\cdot \chi_{\Big[T_1,\, 
\dfrac{T+3T_1}{4}\Big]}-\dfrac{4\,(\alpha+T_0\cdot\overline h\,)}{3\,(T-T_1)}\cdot\chi_{\Big[\dfrac{T+3T_1}{4},\,
\dfrac{3T+T_1}{4}\Big]}\\
\noalign{\smallskip}
&\quad +\left[-\dfrac{4\,(\alpha+T_0\cdot\overline h\,)}{3\,(T-T_1)}\!+\! \dfrac{
4\,(4t-3T-T_1)
\!\cdot\! (\alpha+T_0\cdot\overline h\,)}{3\,(T-T_1)^2}\right]\cdot \chi_{\Big[\dfrac{3T+T_1}{4},\,
T\Big]}\,,
\end{aligned}
\end{equation}
}
%
\vspace{-7pt}

\noindent
where $\chi_{_{ J}}$ denotes the characteristic function of an interval $J\subset \R$, $T_0$ is the time defined in~\eqref{T_0-def}, 
while $\alpha\in\{\alpha_-,\,\alpha_+\}$,
and the constants $0<\tau_1<T-T_0$, $\overline h\in\R$, will be chosen later 
so that 
\begin{equation}
\label{time-exist-classical}
T_1\doteq T_0+\tau_1<T^h
\end{equation}
\big($T^h$~being the maximal time of existence of a classical solution to~\eqref{eq:cpblawonr}\big), and such that there holds 
\begin{equation}
\label{constant-terminal-datum}
u(T_1,x)=\alpha+ T_0\cdot \overline h\qquad x\in [a,b]\,.
\end{equation}
Notice that the definition of~\eqref{def-h-1}, together with~\eqref{constant-terminal-datum}, then implies
\begin{equation}
u(t,x)=\alpha+ T_0\cdot \overline h+\int_{T_1}^t h(s)~ds\qquad t\in [T_1,\, T]\,,\ \ \ x\in [a,b]\,, 
\end{equation}
which in turn, by a direct computation, yields
\begin{equation}
\label{zero-terminal-datum-222}
u(T,x)=\alpha+T_0\cdot \overline h-\alpha-T_0\cdot \overline h =0\qquad x\in [a,b]\,,
\end{equation}
thus showing that condition~\eqref{zero-terminal-datum} is verified.
\begin{figure}
\begin{center}
\hspace{-2truecm}\resizebox{!}{5truecm}{\input 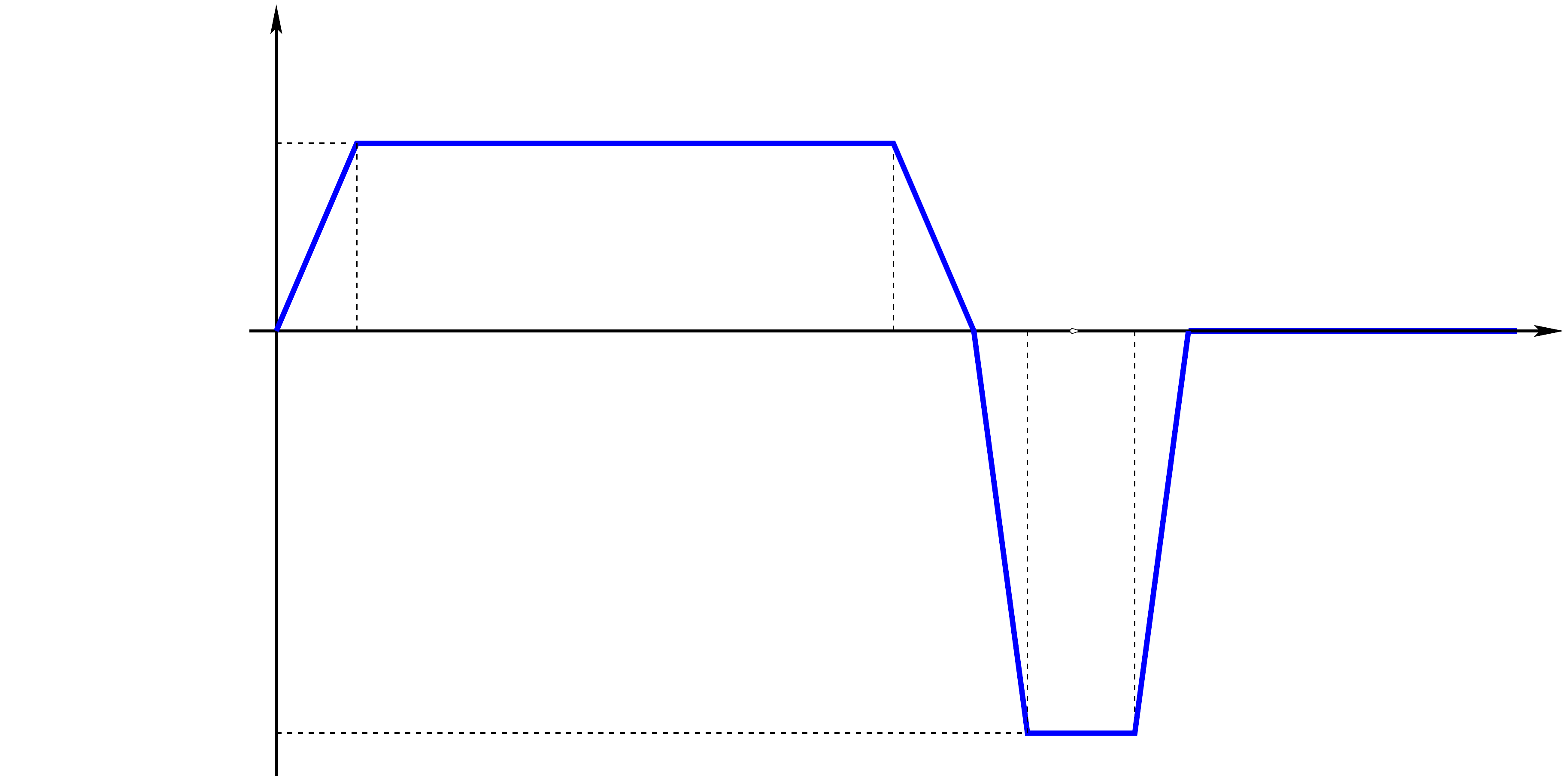_t}
\caption{The source control}
\end{center}
\end{figure}
Hence, in order to conclude the proof of the theorem we need only to establish~\eqref{time-exist-classical}-\eqref{constant-terminal-datum},
with $\alpha=\alpha_-$ or $\alpha=\alpha_+$.
To this end, relying on~\eqref{u(t,x(t))}-\eqref{u_x(t,x(t))}, we find
\begin{equation}\label{z0(t)}
z_0(t)~=~\overline{u}(x_0)+\ds{t^2\cdot \overline{h}\over 2\tau_1}\cdot \chi_{[0,\tau_1]}+\left(t-{\tau_1\over 2}\right)\cdot \bar{h}\cdot \chi_{[\tau_1,T_0]}+\left(T_0-\dfrac{(T_1-t)^2}{2\tau_1}\right)\cdot\overline h \cdot \chi_{[T_0,T_1]}
\end{equation}
for all $t\in [0,T_1]$ and 
\begin{equation}
\label{x(t)-2}
\begin{aligned}
x(T_1)&=x_0+\ds\int_{0}^{\tau_1}f'(z_0(s))ds+\int^{T_0}_{\tau_1}f'\Big(\overline{u}(x_0)+\ds{\tau_1 \cdot\overline{h}\over 2}+(s-\tau_1)\cdot \overline{h}\,\Big)~ds+
\int^{T_1}_{T_0}f'(z_0(s))ds\\
\noalign{\medskip}
&=x_0+\ds\int_{0}^{\tau_1}f'(z_0(s))ds+{1\over \bar{h}}\cdot \left[f\Big(\overline{u}(x_0)+\ds\Big(T_0-{\tau_1\over 2}\Big) \cdot \overline{h}\Big)-f\Big(\overline{u}(x_0)+\ds{\tau_1 \bar{h}\over 2}\Big)\right]+\int^{T_1}_{T_0}f'(z_0(s))ds
\\
\noalign{\allowdisplaybreaks}
\noalign{\pagebreak}
&=x_0+\ds\int_{0}^{\tau_1}f'(z_0(s))ds+T_0\cdot \Delta f\big(\bar{u}(x_0);\,T_0\cdot\overline h\,\big)
\\
\noalign{\medskip}
&\qquad\quad\ +{\tau_1\over 2}\cdot \left[
\Delta f\left(\overline{u}(x_0)+T_0\cdot\bar{h}; -{\tau_1\bar{h}\over 2}\right)
-\Delta f\left(\overline{u}(x_0);{\tau_1\bar{h}\over 2}\right)\right]+\int^{T_1}_{T_0}f'(z_0(s))ds\,.
\end{aligned}
\end{equation}
\noindent
{\bf 3.} 
Since we are assuming that $[|f|]_{I'_1}>0$, and because $\mathrm{Im}(\overline u)$ is a closed interval, recalling definition~\eqref{norm1-[Delta-def} and~\eqref{new-initial-data-1} there will be some $\overline k$ such that, either
\begin{equation}
\label{kappabar-def}
\Delta f(u;\,\overline k\,)> [|f|]_{I'_1} -\dfrac{\varepsilon_1\cdot (b-a)}{2\,T_0}\qquad\forall~u\in \mathrm{Im}(\overline u)\,,
\end{equation}
or
\begin{equation}
\label{kappabar-def22}
\Delta f(u;\,\overline k\,)< -[|f|]_{I'_1} +\dfrac{\varepsilon_1\cdot (b-a)}{2\,T_0}\qquad\forall~u\in \mathrm{Im}(\overline u)\,,
\end{equation}
with 
$\varepsilon_1$ as in~\eqref{T_0-def}-\eqref{bound-initial-data-111}.
To fix the ideas, assume that~\eqref{kappabar-def} holds and that $\overline k>0$.
Then, choosing 
\begin{equation}
\label{accabar-def}
\overline h = \dfrac{\overline k}{T_0}, 
\end{equation}
we find
\begin{equation}
\label{deltaf-est2}
\Delta f\big(\overline u(x_0);\,T_0\cdot\overline h\,\big)>
[|f|]_{I'_1} -\dfrac{\varepsilon_1\cdot (b-a)}{2\, T_0}\,.
\end{equation}
Hence, 
if 
{$x(T_1)\in [a,b]$}, 
and we choose
\begin{equation}
\label{tau1-cond1}
\tau_1<
\min\left\{
\dfrac{\varepsilon_1\cdot(b-a)}{6\cdot \|f'\|_{C^0(I)}}\,, T-T_0\right\},
\end{equation}
combining~\eqref{x(t)-2} with~\eqref{deltaf-est2}, and recalling~\eqref{T_0-def},  we derive 
\begin{equation}
\label{x(t)-3}
\begin{aligned}
x_0&\leq {x(T_1)} +3\tau_1 \cdot \|f'\|_{C^0(I)} -T_0\cdot \Delta f\big(\overline u(x_0);\,T_0\cdot\overline h\,\big)
\\
\noalign{\smallskip}
&\leq b - T_0\cdot [|f|]_{I'_1}  +3\tau_1 \cdot \|f'\|_{C^0(I)} + \dfrac{\varepsilon_1\cdot (b-a)}{2}
\\
\noalign{\smallskip}
&< b - T_0\cdot [|f|]_{I'_1}+ \varepsilon_1\cdot (b-a)=a-\varepsilon_1\cdot (b-a)\,.
\end{aligned}
\end{equation}
Because of~\eqref{new-initial-data-3}, \eqref{z0(t)}, the inequality~\eqref{x(t)-3} implies that ${u(T_1, x(T_1))}=
\overline u(x_0)+T_0\cdot\overline h=\alpha_-+T_0\cdot\overline h$,
which proves~\eqref{constant-terminal-datum}, 
choosing
\begin{equation}
\label{alpha-def-2}
\alpha=\alpha_-\,.
\end{equation}
On the other hand,  relying on~\eqref{T_0-def}, \eqref{new-initial-data-1}, \eqref{u_x(t,x(t))},
and taking
\begin{equation}
\label{tau1-cond2}
\tau_1<
\dfrac{\varepsilon_1\cdot(b-a)}{2\cdot [|f|]_{I'_1}}\,,
\end{equation}
we deduce that, if $\overline{u}'(x_0)\neq 0$, then
\begin{equation}
\label{z1(t)-est2}
\begin{aligned}
\dfrac{1}{|z_1(t)|}&\geq \dfrac{1}{|\overline{u}'(x_0)|}-\left|\int_{0}^{t}f''(z_0(\tau))d\tau\right|
\\
\noalign{\smallskip}
&\geq \dfrac{1}{ \|\overline u'\|_{C^0(\R)}}-t \cdot  \|f''\|_{C^0(I)}
\\
\noalign{\smallskip}
&>
\frac{(b-a)\!\cdot\!\left(1+ 3\,\varepsilon_1\right)\!\cdot\! \|f''\|_{C^0(I)}}{[|f|]_{I'_1}}-  T_1 \cdot  \|f''\|_{C^0(I)}
\\
\noalign{\smallskip}
&>\frac{(b-a)\cdot\varepsilon_1\cdot \|f''\|_{C^0(I)}}{2\cdot [|f|]_{I'_1}}\qquad\forall~t\in [0,{T_1}]\,.
\end{aligned}
\end{equation}
Therefore, choosing
\begin{equation}
\label{tau1-cond3}
\tau_1< \min\left\{\dfrac{\varepsilon_1\cdot(b-a)}{6\cdot \|f'\|_{C^0(I)}},\,
\dfrac{\varepsilon_1\cdot(b-a)}{[|f|]_{I'_1}},\, T-T_0
\right\},
\end{equation}
and observing that $z_1(t)\equiv 0$ \, if \, $\overline{u}'(x_0)= 0$,
we deduce from~\eqref{z1(t)-est2} that, for every solution $x(t)$ of~\eqref{char-eq}
starting at $x_0\in\R$, the function
$z_1(t)= \partial_x u(t,x(t))$ 
satisfies
\begin{equation}
|z_1(t)|<+\infty\qquad \forall~t\in [0,{T_1}]\,,
\end{equation}
 which yields~\eqref{time-exist-classical}.
This completes the proof of the theorem with the choice of $\overline h$, $\alpha$ 
and $\tau_1$ in~\eqref{def-h-1}
according with~\eqref{accabar-def}, \eqref{alpha-def-2}, \eqref{tau1-cond3}, respectively.

\qed

\noindent
{\bf Proof of Proposition~\ref{C1-null-controll-nonconvex-2}.}\\
\vspace{-10pt}

\noindent
To fix the ideas assume that the flux $f$ satisfies the assumptions {\bf (H1)} and\, {\bf (H2)-(ii)}.
Given $\overline u\in C^1([a,b])$, 
set $I'_1\doteq \mathrm{Im}(\overline u)$ and $I_u\doteq (i_-,\,u)$.
Observe that,  because of {\bf (H2)-(ii)}  and~\eqref{derf-[||]-asympt}, 
we have 
\begin{equation}
\nonumber
\lim_{u\to+\infty} \frac{(b-a)}{[|f|]_{I'_1,u}}=\lim_{u\to+\infty} \frac{(b-a)}{|f'(u)|}=0\,,
\end{equation}
and
\begin{equation}
\nonumber
\lim_{u\to+\infty}\!\frac{[|f|]_{I'_1,u}}{(b-a)
\cdot\! \|f''\|_{C^0(I_u)}}=\lim_{u\to+\infty}\!\frac{\|f'\|_{C^0(I_u)}}{(b-a)
\cdot\! \|f''\|_{C^0(I_u)}}=+\infty\,,
\end{equation}
where $[|f|]_{I'_1,u}$ is defined as in~\eqref{derf-[||]-def}.
Then, given any $T>0$, 
there will be $u_0>i_-$ such that
\begin{equation}
T> \frac{(b-a)}{[|f|]_{I'_1,{u_0}}}\,,\qquad\qquad
\|\overline u'\|_{C^0([a,b])}< \frac{[|f|]_{I'_1,u_0}}{(b-a)
\cdot\! \|f''\|_{C^0(I_{u_0})}}\,.
\end{equation}
Now, applying Theorem~\ref{C1-null-controll-nonconvex-1} to the flux $f: I_{u_0}\to \R$, 
and to the initial data $\overline u\in C^1([a,b])$,
which satisfy the assumptions {\bf (H1), (H2)-(i)}, $[|f|]_{I'_1,u_0}>0$,
and~\eqref{bound-initial-terminal-data-11},
respectively,  
we deduce the conclusion of Proposition~\ref{C1-null-controll-nonconvex-2}.
\qed
\bigskip

\noindent
{\bf Proof of Proposition~\ref{C1-null-controll-convex-1}.}\\
\vspace{-10pt}

\noindent
{\bf 1.} 
Given $\overline u\in C^1([a,b])$
 satisfying $\mathrm{Im}(\overline u)\subsetneq I'_1$ and~\eqref{bound-initial-terminal-data-22}, 
  let $\varepsilon_1>0$ (depending on $T-T^*_1$ and $\rho$)
  be such that $T>T_0$, with $T_0$ as in~\eqref{T_0-def}, and
\begin{equation}
\label{bound-initial-data-112}
 \!\frac{[|f|]_{I'_1}}{(b-a)\cdot\! \|f''\|_{C^0(I)}}-\rho<\!\frac{[|f|]_{I'_1}}{(b-a)\!\cdot\!\left(1+ 
3\,\varepsilon_1\right)\!\cdot\! \|f''\|_{C^0(I)}}\,.
\end{equation}
Then, in view of~\eqref{bound-initial-terminal-data-22}, \eqref{bound-initial-data-112}, 
we extend $\overline u$ to a continuously differentiable
function on $\R$, 
 that we still denote $\overline u$, so that 
 \begin{gather}
 \label{new-initial-data-4}
 \mathrm{Im}(\overline u)\subsetneq I'_1,\qquad\qquad
\sup_{x\in \R}\lfloor \overline u'(x)\rfloor_-<\!\frac{[|f|]_{I'_1}}{(b-a)\!\cdot\!\left(1+ 
3\,\varepsilon_1\right)\!\cdot\! \|f''\|_{C^0(I)}}\,,
\\
\noalign{\medskip}
 \label{new-initial-data-4b}
 \|\overline u\|_{C^0(\R)} \leq 2\cdot  \|\overline u\|_{C^0([a,b])}\,,\qquad\quad
\text{Tot.\!Var.}\{ \overline u;\,\R\}\leq 2\cdot \text{Tot.\!Var.}\{ \overline u;\,[a,b]\}\,,
 \\
  \noalign{\medskip}
   \label{new-initial-data-5}
   \overline u(x)=\begin{cases}
   \alpha_-\quad&\text{if}\qquad x\leq a-\varepsilon_1\cdot (b-a)\,,
   \\
   \noalign{\smallskip}
    \alpha_+\quad&\text{if}\qquad x\geq b+\varepsilon_1\cdot (b-a)\,,
   \end{cases}
 \end{gather}
for some constants 
\begin{equation}
\label{alphapm-def}
\alpha_-,\ \alpha_+\in\text{Im}(\overline u)\,.
\end{equation}
Next observe that, if we show that the Cauchy problem~\eqref{eq:cpblawonr},
with $h$ defined as in~\eqref{def-h-1}, admits a classical solution $u$ on $[0, {T_1}]\times\R$, with $T_0$ as in~\eqref{T_0-def},
and $\tau_1>0$ satisfying~\eqref{tau1-cond1}, then by the same arguments of the proof of  Theorem~\ref{C1-null-controll-nonconvex-1} we deduce that~\eqref{constant-terminal-datum}, \eqref{zero-terminal-datum-222} hold. Hence, in order to complete the proof 
that $u$ is a classical solution of~\eqref{eq:blaw}, \eqref{eq:datum}
 satisfying~\eqref{zero-terminal-datum}, it remains to prove
that~\eqref{time-exist-classical} is also true. To this end notice that, since $f''(u)$ is nonnegative (being $f$ a convex map),
by~\eqref{ricc-eq} it follows that $z_1$ is a decreasing map on $[0, T^h)$. 
Moreover, if $\overline u'(x_0)>0$ from~\eqref{u_x(t,x(t))} it follows that $z_1(t)>0$ for all $t\in [0,T^h)$.
On the other hand, in the case where $\overline u'(x_0)<0$, relying on~\eqref{T_0-def}, \eqref{u_x(t,x(t))}, \eqref{new-initial-data-4}, 
and taking $\tau_1$ as in~\eqref{tau1-cond2},
we deduce 
\begin{equation}
\label{z1(t)-est4}
\begin{aligned}
\dfrac{1}{z_1(t)}&\leq \dfrac{1}{\overline{u}'(x_0)}+\left|\int_{0}^{t}f''(z_0(\tau))d\tau\right|
\\
\noalign{\smallskip}
&\leq \dfrac{-1}{\sup_{x\in \R}\lfloor \overline u'(x)\rfloor_-}+t \cdot  \|f''\|_{C^0(I)}
\\
\noalign{\smallskip}
&<-\frac{(b-a)\!\cdot\!\left(1+ 3\,\varepsilon_1\right)\!\cdot\! \|f''\|_{C^0(I)}}{[|f|]_{I'_1}}+  {T_1} \cdot  \|f''\|_{C^0(I)}
\\
\noalign{\smallskip}
&<-\frac{(b-a)\cdot\varepsilon_1\cdot \|f''\|_{C^0(I)}}{2\cdot [|f|]_{I'_1}}
\qquad\forall~t\in [0, {T_1}]\,.
\end{aligned}
\end{equation}
Thus, choosing $\tau_1$ as in~\eqref{tau1-cond3}, we derive
\begin{equation}
-\infty< z_1(t)\leq \overline u'(x_0)\qquad\quad \forall~t\in {[0,T_1]}\,,\quad x_0\in\R\,,
\end{equation}
which shows that~\eqref{time-exist-classical} is verified.

\noindent
{\bf 2.} By the definition of $h$ in~\eqref{def-h-1}, and because of~\eqref{alphapm-def}, a direct computation yields
\begin{align}
\label{totvar-h-1}
\text{Tot.\!Var.}\{ h;\,[0,T]\}&=\frac{2\cdot |\overline k|}{T_0}+\frac{8\cdot|\alpha_\pm+\overline k|}{3\cdot(T-{T_1})}
\leq  \bigg(\frac{6\,T\!+\!2\,T_0\!-\!6\,\tau_1}{3\,T_0\cdot(T\!-{T_1})}\bigg)\!\cdot |\overline k|+\frac{8}{3(T\!-{T_1})}
\!\cdot \|\overline u\|_{C^0([a,b])}\,,
\\
\noalign{\bigskip} 
\label{c0-h-1}
\|h\|_{C^0([0, T])}&\leq \frac{\text{Tot.\!Var.}\{ h;\,[0,T]\}}{2}
\leq  \bigg(\frac{3\,T\!+\!T_0\!-\!3\,\tau_1}{3\,T_0\cdot(T\!-{T_1})}\bigg)\!\cdot |\overline k|+\frac{4}{3(T\!-{T_1})}
\!\cdot \|\overline u\|_{C^0([a,b])}\,,
\\
\noalign{\bigskip}
\label{inth-1}
&\qquad \left|\int_0^t h(s) ds\right|
\leq |\overline k|+|\alpha_\pm|
\leq  |\overline k|+\|\overline u\|_{C^0([a,b])}
\qquad\forall~t\in[0, T]\,,
\end{align}
where $\overline k=T_0\cdot\overline h$ is a constant choosen so that~\eqref{kappabar-def} holds
which, recalling~\eqref{argsup}, \eqref{T_0-def},  can be taken so that 
\begin{equation}
\label{barkappa-bound11}
|\overline k|\leq \arg\text{sup}{[|f|]_{I'_1,c_1}}+1,\qquad\quad
c_1\leq \frac{\varepsilon_1}{2(1+2\varepsilon_1)}\cdot [|f|]_{I'_1}.
\end{equation}
Then, choosing
\begin{equation}
\label{tau1-cond333}
\tau_1< \min\left\{\dfrac{\varepsilon_1\cdot(b-a)}{6\cdot \|f'\|_{C^0(I)}},\,
\dfrac{\varepsilon_1\cdot(b-a)}{[|f|]_{I'_1}},\, \frac{T-T_0}{2}
\right\},
\end{equation}
\eqref{totvar-h-1}, \eqref{c0-h-1} imply
\begin{equation}
\label{totvar-h-2}
 \|h\|_{C^0([0,T])}+\text{Tot.\!Var.}\{ h;\,[0,T]\}\leq \max\bigg\{
 \frac{6\,T\!+\!3\,T_0}{T_0\cdot(T\!-\!T_0)},\,
\frac{8}{T\!-\!T_0} 
\bigg\}
 \cdot \Big(|\overline k| + \|\overline u\|_{C^0([a,b])}\Big)\,,
\end{equation}
while from \eqref{u(t,x(t))}, \eqref{new-initial-data-4}, \eqref{inth-1}, we deduce 
\begin{equation}
\label{u-c0-est-234}
\|u(t,\,\cdot\,)\|_{C^0([a,b])}\leq |\overline k|+4\cdot \|\overline u\|_{C^0([a,b])}\,.
\end{equation}

Next, observe that, letting $ \text{Tot.\!Var.}\!^-\{ \overline u;\,[a,b]\}$ denote the negative variation of $ \overline u$ on $[a,b]$
(e.g. see~\cite{Fol}), one has
\begin{equation}
\label{totvar-est1}
\text{Tot.\!Var.}\{ \overline u;\,[a,b]\}\leq 2\, \big(\|\overline u\|_{C^0([a,b])}+\text{Tot.\!Var.}\!^-\{ \overline u;\,[a,b]\}\big)\,.
\end{equation}
Thus, 
we have 
\begin{equation}
\label{totvar-est2}
\text{Tot.\!Var.}\{ \overline u;\,[a,b]\}\leq 2\cdot(1+(b-a)) \cdot \big(\|\overline u\|_{C^0([a,b])}+
\sup_{x\in [a,b]}\lfloor \overline u'(x)\rfloor_-\big)\,.
\end{equation}
On the other hand, notice that a classical solution of~\eqref{eq:cpblawonr} is also the unique entropic weak solutions
of~\eqref{eq:cpblawonr}.
Hence,  since 
 scalar balance laws as in~\eqref{eq:cpblawonr}, with a source term $h$ only depending on time,
admit entropic weak solutions with total variation nonincreasing in time (e.g.  obtained by an operator splitting algorithm, see~\cite{DafermosBook}), 
relying also on~\eqref{new-initial-data-4b}  we derive
\begin{equation}
\label{totvar-est3}
\begin{aligned}
\text{Tot.\!Var.}\{ u(t,\cdot\,) ;\,[a,b]\}
&\leq \text{Tot.\!Var.}\{ u(t,\cdot\,) ;\,\R\}
\\
\noalign{\smallskip}
&\leq  \text{Tot.\!Var.}\{ \overline u ;\,\R\}
\\
\noalign{\smallskip}
&\leq  2\cdot \text{Tot.\!Var.}\{ \overline u ;\,[a,b]\}
\qquad\forall~t\in[0,T]\,.
\end{aligned}
\end{equation}
Then, combinig~\eqref{totvar-est2}, \eqref{totvar-est3}, we obtain
\begin{equation}
\label{totvar-est4}
\text{Tot.\!Var.}\{ u(t,\cdot\,) ;\,[a,b]\}\leq 4\cdot(1+(b-a)) \cdot \big(\|\overline u\|_{C^0([a,b])}+
\sup_{x\in [a,b]}\lfloor \overline u'(x)\rfloor_-\big)\,.
\end{equation}
Hence, \eqref{totvar-h-2}, \eqref{u-c0-est-234}, \eqref{totvar-est4}
show that the estimates~\eqref{tot-bar-bound11}, \eqref{tot-bar-bound22} are satisfied with
\begin{equation}
C_1= \max\bigg\{
\frac{(6\,T\!+\!3\,T_0)\cdot(1+|\overline k|)}{T_0\cdot(T\!-\!T_0)},\,
\frac{8\, (1+|\overline k|)}{T\!-\!T_0},
\, 4\,(2+(b-a))+|\overline k| \bigg\},
\end{equation}
where $\overline k$ satisfies the bound~\eqref{barkappa-bound11}.
This completes the proof of the theorem.
\qed

\bigskip

\noindent
{\bf Proof of Proposition~\ref{C1-null-controll-convex-2}.}\\
\vspace{-10pt}

\noindent
The conclusions of Proposition~\ref{C1-null-controll-convex-2} follow from Proposition~\ref{C1-null-controll-convex-1}
with the same arguments of the proof of Proposition~\ref{C1-null-controll-nonconvex-2}.
\qed
\smallskip

\pagebreak

\section{Controllability of BV states}
\label{sec:contr-entropy-weak}

\noindent
{\bf Proof of Theorem~\ref{thm:glob-controllability-convex-1}.}\\
\vspace{-10pt}

\noindent
Given $\overline u\in BV([a,b])$ and $\psi\in BV([a,b])$,  
with $\mathrm{Im}(\overline u)\subsetneq I'_1$, $\mathrm{Im}(\psi)\subsetneq I'_2$,
and such that~\eqref{cond-iniital-terminal-data} holds,
relying on Lemma~\ref{approx-bvmaps} in the Appendix there will be sequences
$\{\overline u_n\}_{n\geq 1}, \{\psi_n\}_{n\geq 1} \subset C^1([a,b])$, with
$\mathrm{Im}(\overline u_n)\subsetneq I'_1$, 
$\mathrm{Im}(\psi_n)\subsetneq I'_2$ 
such that
\begin{equation}
\label{initial-terminal-data-conv-1}
 \overline u_n\to \overline u\,,\qquad
 \psi_n \to \psi
  \qquad \text{in}\quad {\bf L^1} ([a,b])\,,
\end{equation}\
\vspace{-5pt}
and
\begin{equation}
\label{cond-iniital-terminal-data-222}
\sup_{x\in [a,b]}\!\lfloor \overline u'_n(x)\rfloor_- \leq ~\frac{[|f|]_{I'_1}}{(b-a)\cdot\! \|f''\|_{C^0(I)}}-\rho\,,
\qquad
\quad\sup_{x\in [a,b]}\!\lfloor  \psi'_n(x)\rfloor_+ \leq ~\frac{[|f|]_{I'_2}}{(b-a)\cdot\! \|f''\|_{C^0(I)}}-\rho.
\end{equation}
Then, applying Theorem~\ref{thm:glob-classic-controllability-convex-1} for
 each pair $\overline u_n, \psi_n\in  C^1([a,b])$, we deduce the existence of 
$\{h_n\}_{n\geq 1}\subset C^0([0,T])$, with $T>T^*$, and $\{u_n\}_{n\geq 1}\subset C^0([a,b]\times [0,T])$ 
that are classical solutions of 
\!\!\!\!\!\!
\begin{align}
\label{eq:cpblaw-n}
&\partial_t u_n+\partial_x f(u_n)=h_n(t)\,,~~~~t\in [0, T],\ x\in [a,b]\,,
\\
\noalign{\medskip}
\label{approx-initial-data-n}
&u_n(0,x)=\overline{u}_n(x) \qquad x\in [a,b]\,, \\
\noalign{\medskip}
\label{approx-terminal-data-n}
& u_n(T,x)=\psi_n(x) \qquad x\in [a,b]\,,
\end{align}
which satisfy the estimates
\begin{equation}
\label{tot-bar-bound111}
 \|h_n\|_{C^0([0,T])}+\text{Tot.\!Var.}\{ h_n;\,[0,T]\}\leq C_1\cdot 
\Big(1+ \|\overline u\|_{C^0([a,b])}+ \|\psi\|_{C^0([a,b])}
\Big),
\end{equation}
 and 
\begin{equation}
\label{tot-bar-bound222}
 \|u_n(t,\cdot)\|_{C^0([a,b])}+\text{Tot.\!Var.}\{ u_n(t,\cdot);\,[a,b]\}\leq C_1\cdot 
\bigg( \|\overline u\|_{C^0([a,b])}+ \|\psi\|_{C^0([a,b])}+\frac{[|f|]_{I'_1}+[|f|]_{I'_2}}{(b-a)\cdot\! \|f''\|_{C^0(I)}}\bigg)
\end{equation}
for all $n\geq 1$, $t\in (0,T)$.

Observe that each $u_n$ is also a weak entropic solution of~\eqref{eq:cpblaw-n}
and that, since  \eqref{tot-bar-bound222} provides a uniform bound on the total variation of $u_n(t,\,\cdot\,)$  for
all $t\in [0,T]$, applying~\cite[Theorem 4.3.1]{DafermosBook} we deduce that 
$t\to u_n(t,\,\cdot)$ is Lipschitz continuous in ${\bf L^1}([a,b])$ on $[0,T]$.
Moreover, by~\eqref{tot-bar-bound222} $\{u_n(t,\,\cdot\,)\}_{n\geq 1}$  are uniformly bounded
for all $t\in [0,T]$.
Therefore, invoking a consequence of Helly's compactness Theorem (e.g. see~\cite[Theorem 2.4]{BressanBook}),
we deduce the existence of a function $u\in{\bf L^1}([0,T]\times[a,b];\, I)$,  which is Lipschitz continuous from $[0,T]$ 
into ${\bf L^1}([a,b];\, I)$, and such that, up to a subsequence, there holds
\begin{equation}
\label{classical-approx-conv-1}
 u_n(t,\,\cdot\,) \to u(t,\,\cdot\,) \qquad
  \text{in}\qquad {\bf L^1} ([a,b])\qquad \forall~t\in [0,T]\,.
\end{equation}
On the other hand  \eqref{tot-bar-bound111} provides a uniform bound on
$\{h_n\}_n$ and on their total variation. Hence, by Helly's compactness Theorem there will be
 a function $h\in \in BV([0,T])$ so that, up to a subsequence, there holds
\begin{equation}
\label{source-approx-conv-1}
 h_n \to h \qquad
  \text{in}\qquad {\bf L^1} ([0,T])\,.
\end{equation}
Hence, relying on~\eqref{classical-approx-conv-1}-\eqref{source-approx-conv-1}, and on the fact that
each $u_n$ is an entropic weak solution of~\eqref{eq:cpblaw-n}, we deduce
\begin{equation}
\label{entr-inew-approx}
\begin{aligned}
&\int_0^T\int_a^b \Big\{\eta(u(t,x))\partial_t\varphi(t,x)+q(u(t,x))\partial_x\varphi(t,x)+\eta'(u(t,x)) h(t)\cdot \varphi(t,x)\Big\}\,dx\,dt\\
\noalign{\smallskip}
& =\lim_{n\to\infty} \int_0^T\int_a^b \Big\{\eta(u_n(t,x))\partial_t\varphi(t,x)+q(u_n(t,x))\partial_x\varphi(t,x)+\eta'(u_n(t,x)) h_n(t)\cdot \varphi(t,x)\Big\}\,dx\,dt\\
& \geq 0\,,
\end{aligned}
\end{equation}
for every entropy / entropy flux pair $(\eta,q)$, with $\eta$ convex. Thus \eqref{entr-inew-approx}, together with~\eqref{initial-terminal-data-conv-1}, \eqref{approx-initial-data-n}, \eqref{classical-approx-conv-1}, proves that $u$ is an entropic weak solution of the Cauchy problem~\eqref{eq:blaw}, \eqref{eq:datum}, while~\eqref{initial-terminal-data-conv-1}, \eqref{approx-terminal-data-n}, \eqref{classical-approx-conv-1} show that the terminal condition~\eqref{terminal-datum} is satisfied. 
Finally, we observe that, by the lower semicontinuity of the total variation with respect to the ${\bf L^1}$
convergence, and because of~
\eqref{classical-approx-conv-1}, \eqref{source-approx-conv-1}, we recover the estimates~\eqref{tot-bar-bound3}, \eqref{tot-bar-bound4}, from \eqref{tot-bar-bound111} and \eqref{tot-bar-bound222}, respectively.
This concludes the proof of the theorem.
\qed
%

\bigskip

\noindent
{\bf Proof of Theorem~\ref{thm:glob-controllability-convex-1b}.}\\
\vspace{-10pt}

\noindent
To fix the ideas assume that the flux $f$ satisfies the assumptions {\bf (H1)} and\, {\bf (H2)-(ii)}.
Then,
given $\overline u\in BV([a,b])$, $\psi\in BV([a,b])$ satisfying~\eqref{cond-iniital-terminal-data-22}, 
and $T>0$, setting $I'_1\doteq \mathrm{Im}(\overline u)$, $I'_2\doteq \mathrm{Im}(\psi)$, $I_u\doteq (i_-,\,u)$, by the same arguments,
and with the same notations, of the proof of Theorem~\ref{C1-null-controll-nonconvex-2}, we deduce that 
there will be $u_0>i_-$ such that
\vspace{-3pt}
\begin{equation}
\begin{gathered}
T> (b-a)\cdot \left(\frac{1}{\,[|f|]_{I'_1,{u_0}}}+ \frac{1}{\,[|f|]_{I'_2,{u_0}}}\right),
\\
\noalign{\medskip}
\sup_{x\in[a,b]} \left\lfloor D^- \overline u(x)\right\rfloor_-<~\frac{[|f|]_{I'_1, u_0}}{(b-a)\cdot\! \|f''\|_{C^0(I)}}-\rho\,,
\qquad\quad
\sup_{x\in[a,b]} \left\lfloor D^+ \overline u(x)\right\rfloor_-<~\frac{[|f|]_{I'_2, u_0}}{(b-a)\cdot\! \|f''\|_{C^0(I)}}-\rho\,,
\end{gathered}
\end{equation}
for some $\rho>0$. 
Hence,
 according to Lemma~\ref{approx-bvmaps}
there exist sequences
$\{\overline u_n\}_{n\geq 1}, \{\psi_n\}_{n\geq 1} \subset C^1([a,b])$, with $\mathrm{Im}(\overline u_n)\subsetneq \mathrm{Im}(\overline u)$, 
$\mathrm{Im}(\psi_n)\subsetneq \mathrm{Im}(\psi)$, 
which satisfy~\eqref{initial-terminal-data-conv-1}, \eqref{cond-iniital-terminal-data-222}.
Now, applying Theorem~\ref{thm:glob-classic-controllability-convex-1} to the flux $f: I_{u_0}\to \R$
which satisfy the assumptions {\bf (H1), (H2)-(i)}, $[|f|]_{I'_1,u_0}>0$, $[|f|]_{I'_2,u_0}>0$,
and to  each pair $\overline u_n, \psi_n\in  C^1([a,b])$, 
that satisfy the estimates~\eqref{bound-initial-terminal-data-2},
by the same arguments of the proof of Theorem~\ref{thm:glob-controllability-convex-1}
we deduce the conclusions of Theorem~\ref{thm:glob-controllability-convex-1b}.
\qed

\section{Some applications}
\label{sec:appl}
In this section we discuss the application of the  controllability results 
established in the paper 
to some examples of conservation laws describing vehicular  traffic
 and sedimentation processes.
Traffic source control can be implemented in a variety of ways
so to modulate the flux capacity
of the road, e.g.~using route  recommendation panels, 
variable speed limit regulation~\cite{GGK}, 
employing integrated vehicular and roadside sensors~\cite{GGBKCS}
or autonomous vehicles~\cite{GGLP}.
Control strategies adopted in the process of 
continuous sedimentation taking place in a clarifier-thickener unit, or settler (used, for
example, in waste water treatment),
usually consist in modulating 
the inflow and outflow of the settler
containing solid particles dispersed in a liquid~\cite{Di1,DF}.
\vspace{-5pt}

\subsection{LWR traffic flow models}
Consider the 
Lighthill, Whitham~\cite{LiWh}  and Richards~\cite{Ri} (LWR) model describing 
the evolution of unidirectional
traffic flow along a stretch of road, say parametrized by $x\in[a,b]$,
given by the conservation law 
\begin{figure}
\begin{center}
\hspace{-1truecm}\resizebox{!}{4truecm}{\input 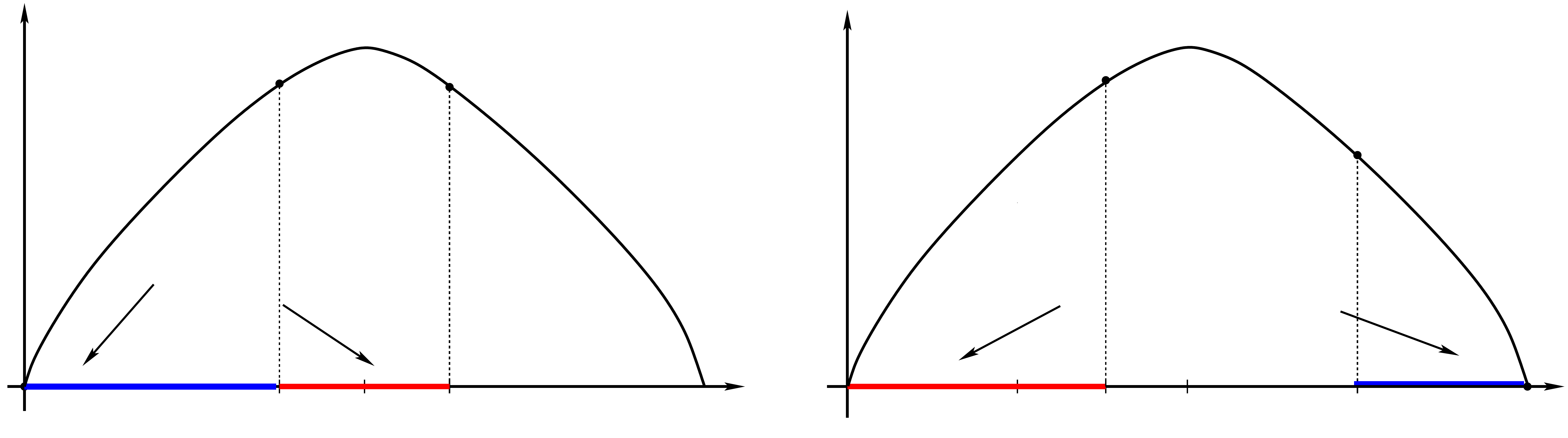_t}
\caption{Flux $f_1(\rho)=\rho\, (2-\rho)$
\label{lwrconc}}
\end{center}
\end{figure}
\vspace{-3pt}
\begin{equation}\label{LWR} 
\partial_t\rho_t+ \partial_x f(\rho)=0\,,
\end{equation}

\vspace{-10pt}
where  $\rho(t,x)$ denotes the (normalized) traffic density, taking values in the interval $[0,2]$,
and $f(\rho)=\rho\, v(\rho)$ is the flux (the so-called fundamental diagram) depending on 
 the average traffic speed $v(\rho)$.
We first assume that, according  with the 
Greenshields' relationship, $v(\rho)=2-\rho$ which leads to the strictly concave flux 
\vspace{-5pt}
\begin{equation}
f_1(\rho)=\rho\, (2-\rho)\qquad\quad \rho \in [0,2]\,.
\end{equation}
Then, in connection with the sets $I_{1,1} =[0,\frac{3}{4}]$, 
$I_{1,2} =  [\frac{3}{4},\frac{5}{4}]$,
$I_{1,3}=[\frac{3}{2}, 2]$ (see Figure~\ref{lwrconc}), by a direct computation we find
\begin{equation}
[|f_1|]_{I_{1,1}}=f'_1\left({\textstyle\frac{3}{4}}\right)={\textstyle \frac{1}{2}}\,,
\qquad\qquad
[|f_1|]_{I_{1,2}}=\left|\frac{f_1(\frac{3}{2})-f_1(\frac{3}{4})}{\frac{3}{4}}\right|={\textstyle \frac{1}{4}}\,,
\qquad\qquad
[|f_1|]_{I_{1,3}}=\left|f'_1\left({\textstyle \frac{3}{2}}\right)\right|=1\,.
\end{equation}
On the other hand, we have $f''_1(\rho)=-2$.
 Hence, invoking Remark~\ref{concave-convex-1} for the equation~\eqref{LWR} with $f(\rho)=f_1(\rho)$, we deduce that we can produce a source control~$h(t)$ which steers any 
$\overline u\in C^1([a,b])$ to any target profile $\psi\in C^1([a,b])$:
\vspace{-20pt}
\begin{itemize}
\item[--] in a time 
$T>T^*_{1,1}+T^*_{1,2}=6\, (b-a)$, provided that  $\mathrm{Im}(\overline u)\subsetneq I_{1,1}$, $\mathrm{Im}(\psi)\subsetneq I_{1,2}$, and
\vspace{-5pt}
\begin{equation*}
\sup_{x\in [a,b]}\!\lfloor \overline u'(x)\rfloor_+ < \frac{1}{4\,(b-a)} \,,
\quad
\quad\sup_{x\in [a,b]}\!\lfloor  \psi'(x)\rfloor_- < \frac{1}{8\,(b-a)}\,;
\end{equation*}
\item[--] in a time 
$T>T^*_{1,3}+T^*_{1,1}=3\, (b-a)$,
provided that  $\mathrm{Im}(\overline u)\subsetneq I_{1,3}$, $\mathrm{Im}(\psi)\subsetneq I_{1,1}$, and
\vspace{-5pt}
\begin{equation*}
\sup_{x\in [a,b]}\!\lfloor \overline u'(x)\rfloor_+ < \frac{1}{2\,(b-a)} \,,
\quad
\quad\sup_{x\in [a,b]}\!\lfloor  \psi'(x)\rfloor_- < \frac{1}{4\,(b-a)}\,.
\end{equation*}
\end{itemize}
Observe that in the first case we are controlling a state $\overline u$ to a target state $\psi$ with possibly vanishing characteristics
since $f_1'(1)=0$ and $1\in I_{1,2}$. Notice that the choice of the intervals $I_{1,1}, I_{1,2}, I_{1,3}$ is made only to simplify the computation, but one can 
derive similar results for any pair of interval $I'_1, I'_2\subsetneq [0,2]$ such that $[|f_1|]_{I'_1}>0$, $[|f_1|]_{I'_2}>0$
by first solving the optimization problem
related to the definition~\eqref{norm1-[Delta-def} of $[|f_1|]_{I'_i}$, $i=1,2$, and then carrying out similar computations as above.
On the other hand, relying on Remark~\ref{concave-convex-2}
we can produce a source control~$h(t)$ which steers any 
$\overline u\in BV([a,b])$ to any target profile $\psi\in BV([a,b])$:
\vspace{-10pt}
\begin{itemize}
\item[--] in a time 
$T>T^*_{1,1}+T^*_{1,2}=6\, (b-a)$, provided that  $\mathrm{Im}(\overline u)\subsetneq I_{1,1}$, $\mathrm{Im}(\psi)\subsetneq I_{1,2}$, and
\vspace{-5pt}
\begin{equation*}
\sup_{x\in [a,b]}\!\lfloor D^+\overline u(x)\rfloor_+ < \frac{1}{4\,(b-a)} \,,
\quad
\quad\sup_{x\in [a,b]}\!\lfloor D^- \psi(x)\rfloor_- < \frac{1}{8\,(b-a)}\,;
\end{equation*}
\item[--] in a time 
$T>T^*_{1,3}+T^*_{1,1}=3\, (b-a)$,
provided that  $\mathrm{Im}(\overline u)\subsetneq I_{1,3}$, $\mathrm{Im}(\psi)\subsetneq I_{1,1}$, and
\vspace{-5pt}
\begin{equation*}
\sup_{x\in [a,b]}\!\lfloor D^+\overline u(x)\rfloor_+ < \frac{1}{2\,(b-a)} \,,
\quad
\quad\sup_{x\in [a,b]}\!\lfloor D^- \psi(x)\rfloor_- < \frac{1}{4\,(b-a)}\,.
\end{equation*}
\end{itemize}

\begin{figure}
\begin{center}
\resizebox{!}{3truecm}{\input 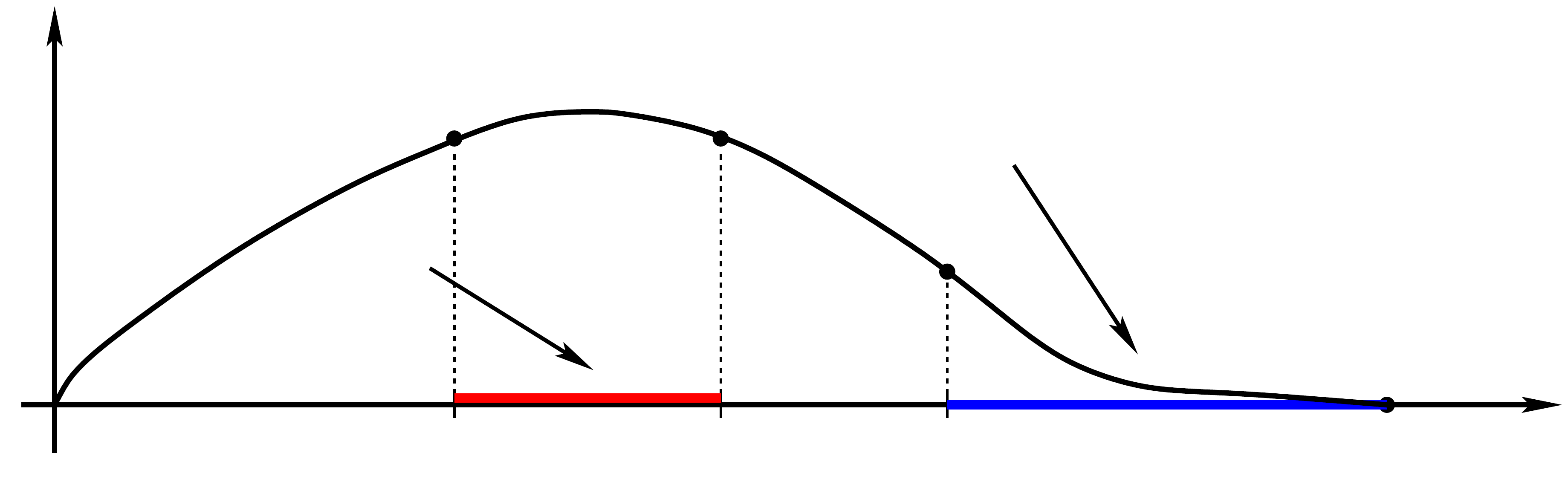_t}
\caption{Flux $f_2(\rho)=\rho\, e^{-\,\dfrac{\rho}{(2-\rho)}}$
\label{lwrflex}}
\end{center}
\end{figure}
Next, we assume that the traffic speed has the expression $v(\rho)= e^{-\frac{\rho}{(2-\rho)}}$
according with the Bonzani and
Mussone's model~\cite{BM}, 
which leads to the (non concave) bell-shaped flux
\vspace{-5pt}
\begin{equation}
f_2(\rho)=\rho\, e^{-\,\dfrac{\rho}{(2-\rho)}}\qquad\quad \rho \in [0,2]\,.
\end{equation}
Then, in connection with the set $I_{2,1} = [\frac{4}{3}, 2]$, $I_{2,2}= [\frac{3}{5},1]$
(see Figure~\ref{lwrflex}), 
by a direct computation we find
\begin{equation}
[|f_2|]_{I_{2,1}}\approx \left|\frac{f_2(\frac{4}{3})-f_2(\frac{4}{3}-0,717)}{0,717}\right|
\approx 0,298
\qquad
[|f_2|]_{I_{2,2}}=\Big|f_2({\textstyle\frac{8}{5}})-f_2({\textstyle\frac{3}{5}})\Big|\approx 0,361\,.
\end{equation}
On the other hand, we have $\|f''_2\|_{C^0([0,2])}=f''_2(\frac{11+\sqrt{13}}{9}\,)
\approx 2,323$,
and thus 
$\frac{[|f_2|]_{I_{2,1}}}{\|f''_2\|_{C^0([0,2])}}\approx 0,128$,
$\frac{[|f_2|]_{I_{2,2}}}{\|f''_2\|_{C^0([0,2])}}\approx 0,155$.
 Hence, invoking Remark~\ref{concave-convex-1} for the equation~\eqref{LWR} with $f(\rho)=f_2(\rho)$, we can produce a source control~$h(t)$ which steers any 
$\overline u\in C^1([a,b])$ to any target profile $\psi\in C^1([a,b])$:
\vspace{-10pt}
\begin{itemize}
\item[--] in a time 
$T>T^*_{2,1}+T^*_{2,2}\approx 6,125\, (b-a)$, provided that  
$\mathrm{Im}(\overline u)\subsetneq I_{2,1}$, $\mathrm{Im}(\psi)\subsetneq I_{2,2}$, and
\vspace{-5pt}
\begin{equation*}
\sup_{x\in [a,b]}\!\lfloor \overline u'(x)\rfloor_+ < \frac{0,128}{(b-a)}\,,
\quad
\quad\sup_{x\in [a,b]}\!\lfloor  \psi'(x)\rfloor_- <  \frac{0,155}{(b-a)} \,.
\end{equation*}
\end{itemize}
Similarly, relying on Remark~\ref{concave-convex-2},
we can produce a source control~$h(t)$ which steers any 
$\overline u\in BV([a,b])$ to any target profile $\psi\in BV([a,b])$:
\vspace{-10pt}
\begin{itemize}
\item[--] in a time 
$T>T^*_{2,1}+T^*_{2,2}\approx 6,125\, (b-a)$, provided that  
$\mathrm{Im}(\overline u)\subsetneq I_{2,1}$, $\mathrm{Im}(\psi)\subsetneq I_{2,2}$, and
\vspace{-5pt}
\begin{equation*}
\sup_{x\in [a,b]}\!\lfloor D^+\overline u(x)\rfloor_+ < \frac{0,128}{(b-a)}\,,
\quad
\quad\sup_{x\in [a,b]}\!\lfloor D^- \psi(x)\rfloor_- < \frac{0,155}{(b-a)} \,.
\end{equation*}
\end{itemize}
\vspace{-5pt}
Again, we observe that these results guarantee the controllability of possibly critical states since $f'_2(3-\sqrt{5})=0$
and $3-\sqrt{5}\in I_{2,2}$.

\subsection{Kynck's sedimentation model}
According with the solid-flux theory by Kynch~\cite{Ky},  
the sedimentation of a suspension of small particles dispersed in a viscous fluid
can be described by a  conservation law
\begin{equation}\label{Ky} 
\partial_t u_t+ \partial_x f(u)=0\,,
\end{equation}
where $u(t,x)$ denotes the solid fraction, taking values in the interval $[0,1]$,
and the flux function (also called drift-flux)
has the same type of expression of the LWR flux, i.e. 
$f(u)=u\, v(u)$, with $v(u)$ denoting the local settling velocity of the particles.
Typically $f$ is a concave-convex map with one inflection
point. Here we consider the 
sedimentation of a solid substance suspended in a cylindrical
batch of height $L$, parametrized so that the bottom is located at $x=0$
and the top at $x=L$, with the drift-flux function proposed in~\cite{MW}
which, up to normalization, in this case can be written as 
\begin{equation}
f_3(u)=-u\, (1-u)^2\qquad\ u \in [0,1]\,.
\end{equation}
\begin{figure}
\begin{center}
\hspace{-2truecm}\resizebox{!}{3truecm}{\input 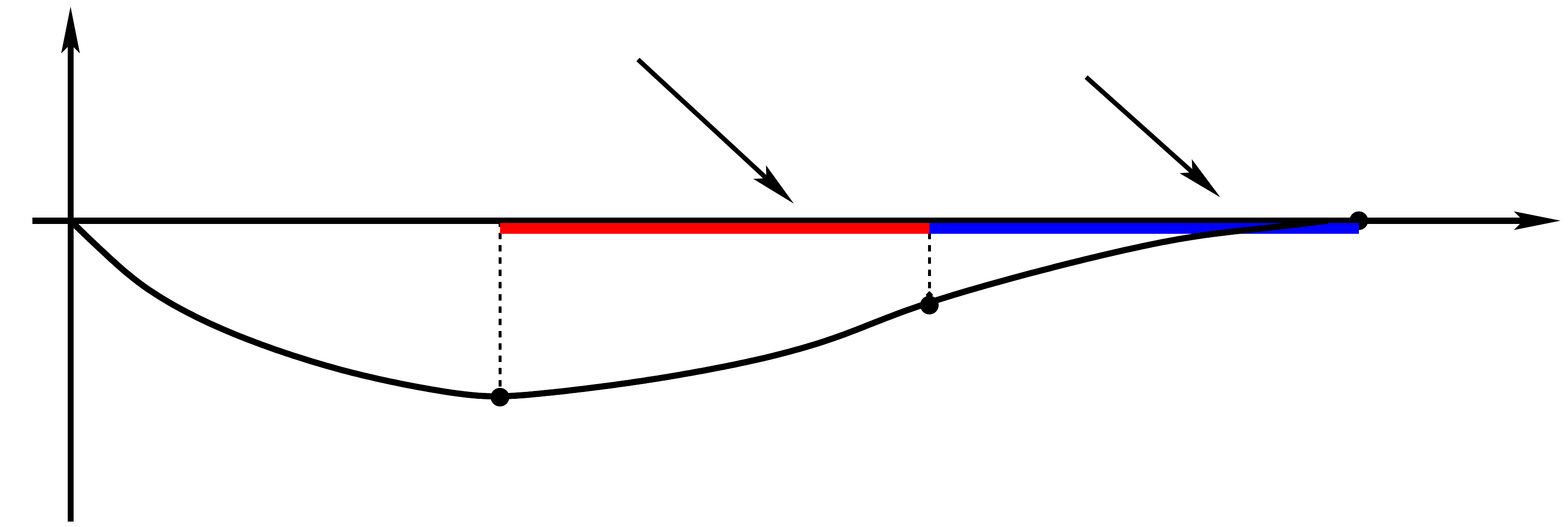_t}
\caption{Flux $f_3(u)=-u\, (1-u)^2$
\label{kynch}}
\end{center}
\end{figure}
Then, in connection with the set $I_{3,1} = [\frac{2}{3}, 1]$, $I_{3,2}=[\frac{1}{3},\frac{2}{3}]$
(see Figure~\ref{kynch}), 
by a direct computation we find
\begin{equation}
[|f_3|]_{I_{3,1}}=
[|f_3|]_{I_{3,2}}=\left|\frac{f_3(1)-f_3(\frac{2}{3})}{\frac{1}{3}}\right|=\left|\frac{f_3(\frac{2}{3})-f_3(\frac{1}{3})}{\frac{1}{3}}\right|=\frac{2}{9}
\approx 0,222\,.
\end{equation}
On the other hand, we have $\|f''_3\|_{C^0([0,1])}=|f''_3(0)|=4$,
and thus $\frac{[|f_3|]_{I_{3,1}}}{\|f''_3\|_{C^0([0,1])}}=\frac{[|f_3|]_{I_{3,2}}}{\|f''_3\|_{C^0([0,1])}}\approx 0,055$.
 Hence, invoking Remark~\ref{concave-convex-1} for the equation~\eqref{Ky} with $f(u)=f_3(u)$, we can produce a source control~$h(t)$ which steers any 
$\overline u\in C^1([a,b])$ to any target profile $\psi\in C^1([a,b])$:
\vspace{-10pt}
\begin{itemize}
\item[--] in a time 
$T>T^*_{3,1}+T^*_{3,2}=9\, (b-a)$, provided that  
$\mathrm{Im}(\overline u)\subsetneq I_{3,1}$, $\mathrm{Im}(\psi)\subsetneq I_{3,2}$, and
\vspace{-5pt}
\begin{equation*}
\sup_{x\in [a,b]}\!\lfloor \overline u'(x)\rfloor_+ < \frac{1}{2\,(b-a)} \,,
\quad
\quad\sup_{x\in [a,b]}\!\lfloor  \psi'(x)\rfloor_- < \frac{1}{2\,(b-a)}\,.
\end{equation*}
\end{itemize}
Similarly, relying on Remark~\ref{concave-convex-2},
we can produce a source control~$h(t)$ which steers any 
$\overline u\in BV([a,b])$ to any target profile $\psi\in BV([a,b])$:
\vspace{-10pt}
\begin{itemize}
\item[--] in a time 
$T>T^*_3=9\, (b-a)$, provided that  $\mathrm{Im}(\overline u)\subsetneq I_{3,1}$, $\mathrm{Im}(\psi)\subsetneq I_{3,2}$, and
\vspace{-5pt}
\begin{equation*}
\sup_{x\in [a,b]}\!\lfloor D^+\overline u(x)\rfloor_+ < \frac{1}{2\,(b-a)} \,,
\quad
\quad\sup_{x\in [a,b]}\!\lfloor D^- \psi(x)\rfloor_- < \frac{1}{2\,(b-a)}\,.
\end{equation*}
\end{itemize}
\vspace{-5pt}
Again, we observe that these results guarantee the controllability of possibly critical states since $f'_3(\frac{1}{3})=0$
and $\frac{1}{3}\in I_{3,1}$.

\section{Appendix}
\label{sec:app}

The approximation of BV function satisfying a one-sided Lipschitz condition in
terms of smooth functions satisfying the same Lipschitz condition (used in the proof of Theorem~\ref{thm:glob-controllability-convex-1})
is guaranteed by the following lemma. The result is standard, but we provide a proof for completness.
\begin{lemma}
\label{approx-bvmaps}
Let $\varphi \in BV([a,b])$, with $\text{Im}(\varphi)\subsetneq I$, satisfy
\begin{equation}
\label{cond-D+1}
D^+\varphi(x) < M\qquad \forall~x\in[a,b]\,,
\end{equation}
for some $M>0$. Then, there exists $\{\varphi_n\}_{n\geq 1}\subset  C^1([a,b])$, 
with  
$\text{Im}(\varphi_n)\subseteq  I$, 
for all $n$ sufficiently large,
and  satisfying
\begin{equation}
\label{cond-D+2}
 \varphi'_n(x) < M\qquad \forall~x\in[a,b]\,,\quad\forall~n\geq 1\,,
\end{equation}
such that
\begin{equation}
\varphi_n\to \varphi\quad \text{in}\quad {\bf L^1} ([a,b])\,.
\end{equation}
\end{lemma}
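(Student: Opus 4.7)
My plan is a mollification argument, preceded by a reformulation of the hypothesis in terms of the distributional derivative of $\varphi$. Setting $g(x)\doteq \varphi(x)-Mx$, the subadditivity of $\limsup$ gives $D^+g(x)=D^+\varphi(x)-M<0$ at every $x\in[a,b]$. A classical real-variable argument then shows that any function whose upper right Dini derivative is everywhere strictly negative (in an appropriate representative) is non-increasing; moreover, the strict sign precludes $g$ from being constant on any subinterval, so $g$ is in fact strictly decreasing. Equivalently, the distributional derivative $\mu$ of $\varphi$, which is a finite signed Borel measure since $\varphi\in BV$, satisfies $\mu\leq M\,dx$, and in addition $\mu((c,d))<M(d-c)$ for every nondegenerate subinterval $(c,d)\subseteq[a,b]$.

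Next I would extend $\varphi$ to a $BV$ function $\tilde\varphi$ on $[a-1,b+1]$ preserving these properties, by attaching two affine pieces of slope $M-\delta$ for some small $\delta>0$; after shrinking $\delta$ if needed, the extension still takes values in a compact subinterval of the open set $I$, which is possible because $\mathrm{Im}(\varphi)\subsetneq I$. I would then define $\varphi_n\doteq \tilde\varphi*\rho_n$ with $\rho_n(x)=n\rho(nx)$ built from a non-negative mollifier $\rho\in C_c^\infty((-1,1))$. Each $\varphi_n$ lies in $C^\infty([a,b])$, the convergence $\varphi_n\to\varphi$ in ${\bf L^1}([a,b])$ follows from the standard mollifier property, and $\mathrm{Im}(\varphi_n)\subseteq I$ because $\varphi_n(x)$ is a convex combination (with respect to the probability density $\rho_n(x-\cdot)$) of values of $\tilde\varphi$, all lying in a closed subinterval of $I$.

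The key slope estimate is obtained as follows. Writing $\tilde\mu$ for the derivative measure of $\tilde\varphi$,
\begin{equation*}
\varphi_n'(x)-M=\int \rho_n(x-y)\,d\tilde\mu(y)-M\int \rho_n(x-y)\,dy=\int \rho_n(x-y)\,d\tilde\nu(y),
\end{equation*}
where $\tilde\nu\doteq \tilde\mu-M\,dx\leq 0$. Since $\tilde\nu$ assigns strictly negative mass to every nondegenerate subinterval (by the strict monotonicity of the corresponding $\tilde g$) and $\rho_n$ is strictly positive on $(-1/n,1/n)$, the right-hand side is strictly negative, whence $\varphi_n'(x)<M$ for all $x\in[a,b]$ and all $n$ large. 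The delicate step in the whole argument is the first one: passing rigorously from the pointwise Dini condition on a generic $BV$ function to the strict measure inequality on every subinterval requires care in the choice of representative at jump points, but is classical.
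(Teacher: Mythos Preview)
Your argument is correct and follows essentially the same route as the paper: subtract the linear function $Mx$, observe that the resulting function is strictly decreasing, mollify, and then add $Mx$ back. Your measure-theoretic computation of $\varphi_n'(x)-M$ as $\int\rho_n(x-y)\,d\tilde\nu(y)$ with $\tilde\nu\leq 0$ strictly negative on intervals is a slightly more explicit way of extracting the strict bound $\varphi_n'<M$ than the paper's difference-quotient step, and your handling of the boundary extension is more careful, but the underlying idea is identical.
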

\begin{proof}  {Observe that, because of~\eqref{cond-D+1}, the map $x\mapsto \psi(x)=\varphi(x)-Mx$ is strictly decreasing on $[a,b]$. Let $\rho_n\in C_c^\infty(\R)$, $n>0$, be a standard mollifier, i.e.
\[
\rho_n~\geq~0,\qquad\mathrm{sup}(\rho_n)~\subseteq~ (-{\textstyle{\frac{1}{n}}},\,{\textstyle{\frac{1}{n}}}),\qquad\mathrm{and}\qquad \int_{\R}\rho_n(x)dx~=~1.
\]
Then, we have that $\psi_n=\rho_n*\psi\in C^\infty([a,b])$, with $Im(\psi_{n})\subsetneq  I$
for all $n$ sufficiently large, and 
\[
\psi_n\to \psi\quad \text{in}\quad {\bf L^1} ([a,b])\,.
\]
Moreover, for every $x_1<x_2$, there holds 
\[
\psi_{n}(x_2)-\psi_{n}(x_1)~=~\int [\psi(x_2-y)-\psi(x_1-y)]\cdot \rho_{n}(y)dy~<~0.
\]
Thus, one has $D^+\psi_n(x) <0$ for all $n$,
and the sequence $\varphi_n=\psi_n+Mx$ does the job.
}
%
\end{proof}


\begin{thebibliography}{99}

\bibitem{adghogo} Adimurthi, S.S.~Ghoshal, and G.D,V.~Gowda, {\em Exact controllability of scalar
conservation laws with strict convex flux}, Math. Control Relat. Fields, {\bf 4}  (2014), 401--449, https://doi.org/10.3934/mcrf.2014.4.401. 

%
%

\bibitem{ACCG1} F. Ancona, A. Cesaroni, G. M. Coclite, and M. Garavello, {\em On the optimization of conservation law models at a
junction with inflow and flow distribution controls}, SIAM J. Control Optim., 56 (2018), pp. 3370--3403.

\bibitem{ACCG2} F. Ancona, A. Cesaroni, G. M. Coclite, and M. Garavello, {\em On optimization of traffic flow performance for conservation laws on networks}, Minimax Theory Appl., (2021), to appear, pp. 1Ð19.




\bibitem{ancglakha1} F. Ancona, O. Glass and K. T. Nguyen, {\em Lower compactness estimates for scalar balance
laws}, Comm. Pure Appl. Math. 65 (2012), no. 9, pp. 1303-1329.

\bibitem{ancglakha2} F. Ancona, O. Glass and K. T. Nguyen, {\em On Kolmogorov entropy compactness estimates for scalar conservation laws without uniform convexity},  SIAM J. Math. Anal. 51 (2019), no. 4, pp. 3020--3051

\bibitem{ancmar1} F.~Ancona and A.~Marson,
{\em On the attainable set for scalar non-linear
conservation laws with boundary control}, {SIAM J. Control Optim.} {\bf 36 } (1998), no. 1, pp. 290-312.

\bibitem{ancmar2} {F.~Ancona and A.~Marson}, {\em Scalar non-linear conservation laws with
integrable boundary data}, {Nonlinear Anal.} {\bf 35} (1999), 687--710.


\bibitem{anc-khai} {F.~Ancona and K.T.~Nguyen}, {\em On the controllability of rich systems of hyperbolic conservation laws with  boundary and source controls}. in preparation (2020).

\bibitem{andoghra}   B.~Andreianov, C.~Donadello, S.S.~Ghoshal and U.~Razafison, 
{\em On the attainable set for a class of triangular systems of conservation laws.} J. Evol. Equ. {\bf 15} (2015), no. 3, 503--532.

\bibitem{andoma}   B.~Andreianov, C.~Donadello and A.~Marson, 
{\em On the attainable set for a scalar nonconvex conservation law.} {SIAM J. Control Optim.} {\bf 55 } (2017), no. 4,  2235--2270.

%
%

\bibitem{BiSpi} S.~Bianchini and L. Spinolo, {\em An overview on the approximation of boundary Riemann problems through physical viscosity},  Bull. Braz. Math. Soc. (N.S.) 47 (2016), no. 1, 131--142.  

\bibitem{BM} I. Bonzani and L. Mussone, {\em From experiments to hydrodynamic traffic flow models. I. Modelling and parameter identification},
Math. Comput. Modelling 37 (2003), no. 12-13, 1435--1442.

\bibitem{BressanBook} A.~Bressan, Hyperbolic systems of conservation laws. The one--dimensional Cauchy problem. \emph{Oxford Lecture Series in Mathematics and Its Applications}, {\bf 20}. Oxford University Press, Oxford, 2000.

%

\bibitem{Chap} M.~Chapouly, {\em Global controllability of nonviscous and viscous Burgers-type equations.} SIAM J. Control Optim. {\bf 48} (2009), no. 3, 1567--1599.

\bibitem{CGR} R. M. Colombo, P. Goatin and M. D. Rosini, {\em On the modelling and management of traffic}, ESAIM Math.
Model. Numer. Anal., 45, (2011), 853--872.

%

\bibitem{xor-mar}  M.~Corghi and A.~Marson, {\em On the attainable set for scalar balance laws with distributed control}. ESAIM Control Optim. Calc. Var. 22 (2016), no. 1, 236--266.

\bibitem{coron} J.-M.~Coron, Control and Nonlinearity, \emph{Mathematical Surveys and Monographs} {\bf 136}, American
Mathematical Society, Providence, RI, 2007.


\bibitem{DafermosBook}  C.~M.~Dafermos, Hyperbolic conservation laws in continuum physics. Fourth edition. \emph{Grundlehren der Mathematischen Wissenschaften}, {\bf 325}. Springer--Verlag, Berlin, 2016.


\bibitem{DMPR} M.L. Delle Monache, B. Piccoli and F. Rossi, {\em Traffic regulation via controlled speed limit},
 SIAM J. Control Optim. 55 (2017), no. 5, 2936--2958. 

\bibitem{Di1} S. Diehl, {\em Dynamic and steady-state behavior of continuous sedimentation}. SIAM J. Appl. Math. 57 (1997), no. 4, 991--1018.

\bibitem{DF} S. Diehl S and S. Far{\aa}s, {\em Control of an ideal activated sludge process in wastewater treatment via an ODE-PDE
model}, J. Process Control 23 (2013), 359--81.
 
\bibitem{DLF} F.~Dubois and P.~G.~LeFloch, {\em Boundary conditions for nonlinear hyperbolic systems of conservation laws.}  J. Differential Equations   {\bf 71} (1988), 93--122.

\bibitem{GGBKCS} Yu B. Gaididei, C. Gorria, R. Berkemer, A. Kawamoto, T. Shiga, P.L. Christiansen, M.P. S{\o}rensen, J. Starke, 
{\em Controlling traffic jams by time modulating the safety distance}, Physical Review E, {\bf 88} (4), (2013), 042803-1--042803-13.

%
%
%
%


%

\bibitem{Fol} G. B. Folland,  Real analysis. Modern techniques and their applications. Second edition. 
\emph{ Pure and Applied Mathematics} (New York). A Wiley-Interscience Publication. John Wiley \& 
Sons, Inc., New York, 1999. xvi+386 pp.

\bibitem{GL-survey} O. Glass, 
{\em An introduction to controllability problems for entropy solutions of one-dimensional systems of conservation laws}
in:  One-dimensional hyperbolic conservation laws and their applications, 161--224, 
Ser. Contemp. Appl. Math. CAM, 21, World Sci. Publ., Hackensack, NJ, 2019.

\bibitem{GGK} P. Goatin, S. Gšttlich, and O. Kolb,  {\em Speed limit and ramp meter control for traffic flow
networks}, Eng. Optim., 48 (2016), pp. 1121--1144.

\bibitem{GGLP}  M. Garavello, P. Goatin, T. Liard, B. Piccoli, {\em A multiscale model for traffic regulation via autonomous vehicles}. J. Differential Equations 269 (2020), no. 7, 6088Ð6124.

\bibitem{GHK} S. Gšttlich, M. Herty, A. Klar, {\em Modelling and optimization of supply chains on complex networks}. Commun. Math. Sci. 4 (2006), no. 2, 315--330.
 
\bibitem{horm} L.~H\"ormander, Lectures on Nonlinear Hyperbolic Differential Equations.
\emph{Math\'ematiques and Application} {\bf 26}, Springer-Verlag, Berlin, 1997.

\bibitem{hor} {T.~Horsin},
{\em On the controllability of the Burgers equation},
{ESAIM - Control, Optim. Calc. Var.} {\bf 3} (1998), pp. 83-95.

%

%
 
 \bibitem{Ky} G. J.  Kynch , {\em A theory of sedimentation}, Trans. Faraday Soc.  48  (1952), 166--76.
 
\bibitem{LAHR}  M. La Marca, D. Armbruster, M. Herty, C. Ringhofer, {\em Control of continuum models of production systems}. IEEE Trans. Automat. Control 55 (2010), no. 11, 2511--2526.


\bibitem{Leau} M.~L\'eautaud, {\em Uniform controllability of scalar conservation laws in the vanishing viscosity limit.} 
SIAM J. Control Optim. {\bf 50} (2012), no. 3, 1661--1699.

%
%

\bibitem{LiRaoChar} T.~Li and B.~Rao, {\em Exact controllability for first order quasilinear hyperbolic systems with vertical characteristics.} Acta Math. Sc. {\bf 29B} (2009), no. 4, 980--990.

\bibitem{LiYu} T.~Li and  L.~Yu, {\em Exact controllability for first-order quasilinear hyperbolic systems with zero eigenvalues}, Chin. Ann. Math., {\bf 24B}(4) (2003), 415--422. 

%

\bibitem{LiWh} M. J. Lighthill and G. B. Whitham, {\em On kinematic waves. II. A theory of traffic flow on long crowded
roads}, Proc. Roy. Soc. London Ser. A, 229 (1955), pp. 317--345.


\bibitem{MW} A.D. Maude, R.L. Whitmore, {\em A generalized theory of sedimentation}, Br. J. Appl. Phys. 9 (1958) 477--482.
 
\bibitem{Perrollaz} V.~Perrollaz, {\em Exact controllability of scalar conservation laws with an additional control in the context of entropy solutions.} SIAM J. Control Optim. {\bf 50} (2012), no. 4, 2025--2045.

\bibitem{Ri} P. I. Richards, {\em Shock waves on the highway}, Operations Res., 4 (1956), pp. 42Ð51.


\bibitem{Serre} D.~Serre, Syst\`emes de lois de conservation. II. \emph{Fondations}. Diderot Editeur, Paris, 1996.

%

\bibitem{wang} Z. Q. Wang, {\em Global exact controllability for quasilinear hyperbolic systems of diagonal form
with linearly degenerate characteristics}, Nonlinear Anal., {\bf 69} (2008), 510--522.


%
 
\bibitem{zhuang} K.~Zhuang, {\em Exact controllability with internal controls for first-order quasilinear hyperbolic systems with zero eigenvalues}, Chin. Ann. Math., {\bf 37B}(4) (2016), 503--514.  

\bibitem{zhulirao} K.~Zhuang, T.~Li and B.~Rao, {\em Exact controllability for first order quasilinear hyperbolic systems with internal controls},  Discrete Contin. Dyn. Syst. {\bf 36} (2016), no. 2, 1105--1124.

\end{thebibliography}
\end{document}